%%%%%%%%%%%%%%%%
%
%  CAG #1482
%
%%%%%%%%%%%%%%%%

\documentclass[12pt]{amsart}

\textwidth=5in \textheight=8in

\usepackage{latexsym, amssymb, amsmath}

\usepackage{amsfonts, graphicx}

\newcommand{\be}{\begin{equation}}
\newcommand{\ee}{\end{equation}}
\newcommand{\beq}{\begin{eqnarray}}
\newcommand{\eeq}{\end{eqnarray}}

\newtheorem{thm}{Theorem}[section]

\newtheorem{lma}{Lemma}[section]
\newtheorem{prop}{Proposition}[section]
\newtheorem{cor}{Corollary}[section]

\theoremstyle{remark}
\newtheorem{rem}{Remark}[section]
\numberwithin{equation}{section}

\def\ep{\epsilon}

\def\p{\partial}
\def\S{\Sigma}

\def\R{\mathbb{R}}

\def\tr{{\rm tr}}
\def\p{\partial}
\def\lf{\left}
\def\ri{\right}

\def\R{\Bbb R}
\def\wt{\widetilde}
\def\la{\langle}
\def\ra{\rangle}
\def\bg{\bar{g}}

\def\bg{\bar{g}}

\def\l{\lambda}

\def\Ric{\text{\rm Ric}}
\def\div{\text{\rm div}}

\def\Pi{\overline{\displaystyle{\mathbb{II}}}}

\def\a{\alpha}

\def\bD{\overline{\nabla}}
\def\bnabla{\overline{\nabla}}
\def\bnu{\overline{\nu}}
\def\th{ \tr_{\bg} h}
\def\vbg{d \mathrm{vol}_{\bg} }
\def\vsg{d \sigma_{\bg}}
\def\bH{\bar{H}}

\begin{document}

\title[]{Scalar curvature rigidity with a volume constraint}

\author{Pengzi Miao$^1$}
\address[Pengzi Miao]{School of Mathematical Sciences, Monash University, Victoria 3800, Australia;
Department of Mathematics, University of Miami, Coral Gables, FL 33124, USA.}
\email{Pengzi.Miao@sci.monash.edu.au; pengzim@math.miami.edu}
\author{Luen-Fai Tam$^2$}
\address[Luen-Fai Tam]{The Institute of Mathematical Sciences and Department of
 Mathematics, The Chinese University of Hong Kong,
Shatin, Hong Kong, China.}
\email{lftam@math.cuhk.edu.hk}
\thanks{$^1$ Research partially supported by Australian Research Council Discovery Grant  \#DP0987650
and by a 2011 Provost Research Award of the University of Miami}
\thanks{$^2$Research partially supported by Hong Kong RGC General Research Fund  \#CUHK 403011}
\renewcommand{\subjclassname}{
  \textup{2010} Mathematics Subject Classification}
\subjclass[2010]{Primary 53C20; Secondary 53C24}\date{}

\begin{abstract}
Motivated by Brendle-Marques-Neves' counterexample to the
Min-Oo's conjecture, we prove a volume constrained scalar
curvature rigidity theorem which applies to the hemisphere.
\end{abstract}

\maketitle

\markboth{Pengzi Miao and Luen-Fai Tam}
{scalar curvature rigidity with a volume constraint}

\section{Introduction}
Recently, Brendle, Marques and Neves  \cite{BrendleMarquesNeves}
have solved  the long-standing Min-Oo's conjecture \cite{MinOo}
by constructing a counterexample.

\begin{thm}[Brendle, Marques and Neves \cite{BrendleMarquesNeves}]    \label{thm-BMN}
Suppose $ n \ge 3$.
Let $ \bg $ be the standard metric on the hemisphere $ \mathbb{S}^n_+$.
There exists a smooth metric $g $ on $ \mathbb{S}^n_+$,
which can be made to be arbitrarily close to $ \bg $ in the $ C^\infty$-topology,
satisfying
\begin{itemize}
\item the scalar curvature of $ g $ is at least  that of $\bg$ at each point in $\mathbb{S}^n_+$
\item $ g $ and $ \bg $ agree in a neighborhood of $ \p \mathbb{S}^n_+$,
\end{itemize}
but $ g $ is not isometric to $ \bg$.
\end{thm}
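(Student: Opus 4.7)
The plan is to realize $g$ as a small perturbation $g_t = \bar g + t h + t^2 k$ of $\bar g$, where the symmetric 2-tensors $h, k$ have compact support in the open hemisphere $\mathbb{S}^n_+ \setminus \p \mathbb{S}^n_+$. This immediately guarantees agreement with $\bar g$ near the boundary and $C^\infty$-closeness to $\bar g$ for small $t$. Non-isometry will follow automatically once one can arrange $R(g_t) > R(\bar g) = n(n-1)$ at some point, because $R(\bar g)$ is constant and any isometry would preserve $R$ pointwise. So the task reduces to producing $h, k$ so that $R(g_t) \ge R(\bar g)$ everywhere, with strict inequality somewhere.

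Expanding the scalar curvature,
\[
R(g_t) = R(\bar g) + t\, L(h) + t^2 \big( L(k) + Q(h) \big) + O(t^3),
\]
where $L$ is the linearized scalar curvature operator at $\bar g$ and $Q$ is a universal quadratic expression in $h$ and its first and second derivatives. The immediate obstruction is that the formal adjoint $L^*$ (acting on functions) has a large kernel on $\mathbb{S}^n_+$, spanned by the first spherical harmonics, i.e., the height functions viewed as static potentials. Pairing any kernel element $f$ against $L(h)$ and integrating by parts yields zero whenever $h$ is compactly supported, so $L(h)$ cannot be made nonnegative with any strict positivity unless it vanishes identically. A purely first-order construction is therefore impossible, and one is forced to choose $h$ with $L(h) \equiv 0$ and argue at second order: one needs $L(k) + Q(h) \ge 0$ with strict sign somewhere.

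The decisive difficulty is the second-order argument. The same kernel obstruction reappears: for the equation $L(k) + Q(h) = \psi$ to be solvable with $k$ of compact support, one needs $\int f\, \psi\, \vbg = \int f\, Q(h)\, \vbg$ for every static potential $f$. Thus $h$ must be selected so that $Q(h)$ pairs favorably against every height function. The approach is to build $h$ from a suitable traceless, divergence-free model tensor in Euclidean space, rescale and cut it off in a small geodesic ball, and compute the leading behavior of these kernel pairings explicitly; the point is that a net positive quadratic contribution survives, with a sign that is peculiar to the hemisphere and has no analogue on the full round sphere (where genuine rigidity does hold). Once $h$ is fixed with this property, $k$ is produced by standard elliptic theory applied to the linearized equation, and a partition of unity localization yields a smooth $g_t$ with $R(g_t) \ge R(\bar g)$ pointwise and strict inequality on an open set, as required. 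The main obstacle in the whole argument is really this last sign computation for $Q(h)$ against the static kernel — everything else is linear perturbation theory with cutoffs.
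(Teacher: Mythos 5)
First, a framing remark: Theorem \ref{thm-BMN} is quoted here from \cite{BrendleMarquesNeves}; this paper contains no proof of it, so your proposal has to be judged against the identities developed in Section \ref{section-formula} and against the actual Brendle--Marques--Neves argument. Measured that way, the step you yourself single out as decisive --- that a compactly supported, traceless, divergence-free $h$ can be chosen so that the quadratic term $Q(h)$ pairs \emph{positively} against the static potential --- is false, and the whole interior-supported second-order scheme is blocked, not merely difficult. Take $f=x_{n+1}=\cos r$, which satisfies $DR^*_{\bg}(f)=0$ and is strictly positive on the open hemisphere. If $h$ is trace-free, divergence-free and supported in a compact subset of the open hemisphere (as in your construction; note your cutoff must in fact preserve exact TT-ness, since otherwise your own first-order requirement $L(h)\equiv 0$ already fails), then applying Lemma \ref{lma-R-R} to the path $\bg+sh$ and integrating against $f$ (all boundary terms vanish, $\la h,DR^*_{\bg}f\ra\equiv 0$, $\tr_{\bg}h=0$, and $h^{ij}h^{kl}\bar R_{ikjl}=(\tr_{\bg}h)^2-|h|^2=-|h|^2$ on the round metric) gives
\[
\int_{\mathbb{S}^n_+} f\,Q(h)\ \vbg \;=\; -\int_{\mathbb{S}^n_+}\Big(\tfrac14|\bD h|^2+\tfrac12|h|^2\Big)f\ \vbg \;<\;0
\]
for every nontrivial such $h$. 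Hence the solvability condition you need can never hold; worse, for $g_t=\bg+th+t^2k$ with $h\neq 0$ and $h,k$ compactly supported one gets $\int f\,(R(g_t)-n(n-1))\ \vbg=t^2\int f\,Q(h)\ \vbg+O(t^3)<0$ for small $t$, which is incompatible with $R(g_t)\ge n(n-1)$ since $f>0$ in the interior. In other words, the hemisphere is \emph{rigid} against exactly the ansatz you propose; the negative sign is the same mechanism that powers the rigidity theorems in this paper, not a sign ``peculiar to the hemisphere'' that helps you.

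There is no contradiction with Theorem \ref{thm-BMN}, and seeing why locates the missing idea. The counterexample's difference $g-\bg$ is compactly supported but not divergence-free; after the gauge-fixing needed to run the above identity, the tensor $h$ no longer vanishes near $\p\mathbb{S}^n_+$, and the computation is defeated by the boundary terms in \eqref{eq-main-formula-1}: at the equator $\bH=0$, $\Pi=0$, $\l=\cos r=0$ while $\l_{;n}=-1$, so the term $\int_\Sigma \l_{;n}\big[-(h_{nn})^2-\tfrac12|X|^2\big]\vsg=\int_\Sigma\big[(h_{nn})^2+\tfrac12|X|^2\big]\vsg\ge 0$ enters with the ``wrong'' sign for rigidity. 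Correspondingly, the actual proof in \cite{BrendleMarquesNeves} injects the positivity through the boundary rather than through an interior second-order excess: one first constructs a metric inducing the round metric on the equator, with strictly positive mean curvature there and $R\ge n(n-1)$, and then applies a separate collar deformation/gluing theorem that uses $H>0$ to make the metric agree with $\bg$ identically near the boundary without destroying $R\ge n(n-1)$. Your proposal contains neither of these two ingredients, and the ingredient it does rely on has the opposite sign. (A side remark: your parenthetical that ``genuine rigidity does hold'' on the closed round sphere under $R\ge n(n-1)$ alone is also inaccurate --- rescaling $\bg$ gives immediate counterexamples, which is precisely why the volume constraint appears in Theorem \ref{thm-VRH-rigidity-intro}.)
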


In this paper,  we observe that if the metric $g$ in Theorem \ref{thm-BMN}
is assumed to satisfy  an additional volume constraint,
then it must be isometric to $\bg$. Precisely, we have

 \begin{thm} \label{thm-intro-Vhalfsphere}
Let $ \bg $ be the standard metric on $\mathbb{S}^n_+$.
Let $ g $ be another metric on $ \mathbb{S}^n_+$  with the properties
\begin{itemize}
\item $ R(g) \ge  R(\bg) $ in $ \mathbb{S}^n_+$
\item $ H(g) \ge H(\bg)  $ on $ \p \mathbb{S}^n_+$
\item $ g  $ and $ \bg$ induce the same metric on $ \p \mathbb{S}^n_+$
\end{itemize}
where $ R(g)$, $ R(\bg)$ are the scalar curvature of $ g$, $ \bg$, and
$H (g)$, $H(\bg)$ are the mean curvature of $ \Sigma$ in $(\Omega, g)$,
$(\Omega, \bg)$.
Suppose  in addition
 $$ V(g) \ge V(\bg), $$
where $ V(g)$, $V(\bg)$ are the volume of $ g $, $ \bg$.
If $ || g - \bg ||_{C^2(\bar{\Omega})} $ is sufficiently small, 
then there is a diffeomorphism  $ \varphi: \Omega \rightarrow \Omega$  with  
$\varphi|_{\Sigma}=\text{\rm id}$, the identify map on $ \Sigma$, 
 such that  $\varphi^*(g)=\bg$.
\end{thm}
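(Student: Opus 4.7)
The plan is to aim for the stronger assertion that under the scalar curvature, mean curvature, and boundary hypotheses \emph{alone} (ignoring the volume hypothesis), one has $V(g) \le V(\bg)$, with equality if and only if there is a diffeomorphism $\varphi$ of $\Omega$ fixing $\Sigma$ pointwise such that $\varphi^*g = \bg$; the theorem then follows by combining this with the assumed $V(g) \ge V(\bg)$. This rephrasing clarifies the role of the Brendle--Marques--Neves example: their construction necessarily satisfies $V(g) < V(\bg)$, and the volume hypothesis is designed to exclude precisely such directions.

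To prove the sharp inequality $V(g) \le V(\bg)$, I would run a second-variation argument at the Einstein reference $\bg$. Write $h = g - \bg$, which is $C^2$-small. Because $g|_\Sigma = \bg|_\Sigma$, the tangential-tangential part $h^T$ vanishes along $\Sigma$. I would first apply a relative Ebin-type slice theorem: replace $g$ by $\varphi_0^*g$ for a diffeomorphism $\varphi_0$ of $\Omega$ fixing $\Sigma$ pointwise, chosen so that the new perturbation satisfies $\div_{\bg} h = 0$ while the hypotheses and the boundary data $h^T|_\Sigma = 0$ persist. Since $\bg$ is Einstein with $\Ric(\bg) = (n-1)\bg$, it is a critical point of the constrained Hilbert--Einstein functional
$$ \mathcal{I}(g) \ = \ \int_\Omega R(g)\, dvol_g \ + \ 2\int_\Sigma H(g)\, d\sigma_g \ - \ (n-1)(n-2)\, V(g). $$
Expanding $\mathcal{I}$ to second order in $h$ and using the gauge together with the boundary data to kill the linear contribution yields
$$ \mathcal{I}(g) - \mathcal{I}(\bg) \ = \ \mathcal{Q}(h) \ + \ o(\|h\|_{C^2}^2), $$
where $\mathcal{Q}$ is a quadratic form pairing $h$ against a Lichnerowicz-type second-order operator on symmetric $2$-tensors. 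The hypotheses $R(g) \ge R(\bg)$ and $H(g) \ge H(\bg)$ make the non-volume part of $\mathcal{I}(g)-\mathcal{I}(\bg)$ non-negative, producing a one-sided bound on $V(g) - V(\bg)$ in terms of $\mathcal{Q}(h)$.

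The decisive obstacle is the spectral/coercivity estimate that makes this bound sharp: on the slice $\{h : \div_{\bg} h = 0,\ h^T|_\Sigma = 0\}$, one needs $-\mathcal{Q}$ to control a positive multiple of $\|h\|^2$, modulo terms absorbable by the $C^2$-smallness. The Brendle--Marques--Neves construction exhibits directions in which the scalar and mean curvature inequalities are attained non-strictly while the volume strictly decreases, so the analytic heart is to show that along any such direction the second-order change in $V$ is strictly negative. This reduces to a positivity/coercivity lemma for an elliptic operator on symmetric $2$-tensors over $\mathbb{S}^n_+$ with mixed Dirichlet/gauge boundary conditions, and it is here that the specific geometry of the hemisphere enters decisively. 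Once this positivity is established, the kernel analysis of $\mathcal{Q}$ identifies the equality case with pull-backs of $\bg$ by diffeomorphisms fixing $\Sigma$, producing the $\varphi$ claimed in the theorem.
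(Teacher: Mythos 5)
Your reformulation (prove $V(g)\le V(\bg)$ from the scalar/mean curvature and boundary hypotheses alone, with equality iff isometric) is logically equivalent to the theorem, and your setup — gauge-fix via a boundary-fixing diffeomorphism so that $h=\varphi_0^*g-\bg$ is $\bg$-divergence free, then expand a functional having $\bg$ as a critical point to second order with careful error control — is essentially the paper's strategy (the paper uses \cite[Proposition 11]{BrendleMarques} for the gauge and the identity of Corollary \ref{cor-main-formula-V-critical} with the constant $\l=-\tfrac{1}{n-1}$, $DR^*_{\bg}(\l)=\bg$, in place of your functional $\mathcal{I}$). But the decisive step — the coercivity of $-\mathcal{Q}$ on the slice $\{\div_{\bg}h=0,\ h|_{T\Sigma}=0\}$ — is exactly what you do not prove, and as you have framed it (volume hypothesis used only at the very end, to intersect $V\le \bar V$ with $V\ge \bar V$) it is doubtful that it is even available. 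The relevant quadratic form is, up to a positive factor, $\int_\Omega\big[-(n+1)(\tr_{\bg}h)^2+2|h|^2+|\bD h|^2+|\bD(\tr_{\bg}h)|^2\big]\vbg$ plus nonnegative boundary terms, and the troublesome direction is the mean value of $\tr_{\bg}h$: writing $\tr_{\bg}h=c+f$ with $\int_\Omega f\,\vbg=0$, the Neumann--Poincar\'e inequality (with $\mu(\mathbb{S}^n_+)=n$) controls the $f$-part with room to spare, but contributes nothing against the $-(n+1)c^2V(\bg)$ term; e.g.\ $h=\tfrac{c}{n}\bg+k$ with $k$ trace- and divergence-free and $k|_{T\Sigma}=-\tfrac{c}{n}\bar\gamma$ lies in your slice with $\tr_{\bg}h\equiv c$, and whether $\int(2|k|^2+|\bD k|^2)$ beats $\big[(n+1)-\tfrac2n\big]c^2V(\bg)$ is a delicate quantitative question you neither formulate nor resolve. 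The paper never proves such unconditional positivity: it uses the volume hypothesis \emph{inside} the spectral step, via \eqref{eq-V-2}, to identify $\int_\Omega\tr_{\bg}h\,\vbg$ with $2(V(g)-V(\bg))$ up to quadratic terms, so that the Poincar\'e mean term is controlled; and it uses $V(g)\ge V(\bg)$ a second time (inequality \eqref{eq-V-3}) to absorb the resulting $(V(g)-V(\bg))^2$ error.

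A secondary inaccuracy: the claim that $R(g)\ge R(\bg)$ and $H(g)\ge H(\bg)$ make the ``non-volume part'' of $\mathcal{I}(g)-\mathcal{I}(\bg)$ nonnegative is not correct as stated, since $\int_\Omega R(g)\,d\mathrm{vol}_g-\int_\Omega R(\bg)\,\vbg\ge n(n-1)(V(g)-V(\bg))$, not $\ge 0$; the extra volume term must be tracked (it effectively changes the coefficient of $V(g)-V(\bg)$ and is what produces the $-2(V(g)-V(\bg))$ in \eqref{eq-main-formula-V-critical}), which again shows the volume difference cannot be decoupled from the rest of the expansion. So the proposal is an accurate sketch of the framework but leaves the analytic heart — the hemisphere-specific eigenvalue argument, and the precise way the hypothesis $V(g)\ge V(\bg)$ enters that argument — unproved, and its plan to avoid the volume hypothesis in the coercivity step does not match how the positivity is actually obtained.
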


Theorem \ref{thm-intro-Vhalfsphere} is indeed a special case of a  more general result:

\begin{thm}\label{thm-VRH-rigidity-intro}
Let $(\Omega,\bg)$ be an $ n $-dimensional  compact Riemannian manifold, of  constant sectional
curvature $  1$, with smooth boundary $\S$.
Suppose
 $\Pi + \bH \bar{\gamma}  \ge 0 $
 (i.e $ \Pi + \bH \bar{\gamma}  $ is positive semi-definite), where
$ \bar{\gamma} $ is the induced metric on $ \Sigma$
and $ \Pi $, $ \bH$ are the second
 fundamental form, the mean curvature of $ \Sigma$ in $(\Omega, \bg)$.
Suppose the first nonzero Neumann eigenvalue
$ \mu $  of $(\Omega, \bg)$ satisfies
$\mu>n-\frac2{n+1}$.

Consider a nearby metric $ g $ on  $\Omega$  with the properties
\begin{itemize}
\item  $R(g)\ge n(n-1)$ where $ R(g) $ is the scalar curvature of $ g$
\item $H(g)\ge \bH$ where $H(g)$ is the mean curvature of $\Sigma$ in $(\Omega, g)$
\item $ g $ and $ \bg $ induce the same metric on $ \Sigma$
\item $ V(g) \ge V(\bg)$ where $ V(g)$, $V(\bg)$ are the volumes of $ g$, $ \bg$.
\end{itemize}
If $ || g-\bg ||_{C^2(\bar{\Omega} )} $ is sufficiently small, then there is a diffeomorphism $\varphi$ on $\Omega$ with  $\varphi|_{\Sigma}=\text{\rm id}$,  such that  $\varphi^*(g)=\bg$.
\end{thm}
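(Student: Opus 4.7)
The plan is a second-variation analysis of the cosmological Einstein--Hilbert functional with the Gibbons--Hawking--York boundary term, combining the three inequality hypotheses with the spectral gap $\mu > n - \tfrac{2}{n+1}$.

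Since all four hypotheses are invariant under diffeomorphisms of $\Omega$ that fix $\Sigma$ pointwise, I will first apply a DeTurck-type diffeomorphism $\varphi_0$---obtained by solving an elliptic boundary value problem for a vector field vanishing on $\Sigma$---to reduce, without loss of generality, to the case where $h := \varphi_0^* g - \bg$ satisfies the Bianchi gauge $\mathrm{div}_{\bg}(h) = \tfrac12 d(\mathrm{tr}_{\bg} h)$ in $\Omega$. Smallness of $\|g - \bg\|_{C^2}$ makes this construction standard via the implicit function theorem, and the common-induced-metric hypothesis gives $h|_{T\Sigma \otimes T\Sigma} = 0$.

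I then form
\[
\mathcal{F}(g) := \int_\Omega R(g)\, dv_g - (n-1)(n-2)\, V(g) + 2\int_\Sigma H(g)\, d\sigma.
\]
The coefficient $(n-1)(n-2)$ equals twice the cosmological constant of a sectional-curvature-$1$ Einstein model, which makes $\bg$ a critical point of $\mathcal{F}$ among metrics with fixed induced boundary metric. Using the three inequality hypotheses one checks directly that
\[
\mathcal{F}(g) - \mathcal{F}(\bg) \;\ge\; 2(n-1)\bigl[V(g) - V(\bg)\bigr] + 2\int_\Sigma \bigl[H(g) - \bH\bigr]\, d\sigma \;\ge\; 0.
\]
Expanding to second order in $h$ (the linear part vanishes by criticality) gives $\mathcal{F}(g) - \mathcal{F}(\bg) = Q(h, h) + O(\|h\|^3)$ for an explicit quadratic form $Q$. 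The argument is then closed by showing $Q(h, h) \le -c \|h\|^2$ on the Bianchi-gauge slice with some $c > 0$; combined with the lower bound above, this forces $h \equiv 0$ for $\|h\|$ sufficiently small, so $g = (\varphi_0)_* \bg$ as desired.

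The crux is the bound $Q(h, h) \le -c \|h\|^2$. In Bianchi gauge, $Q$ decouples into an interior term in the trace $f := \mathrm{tr}_{\bg} h$, an interior term in the traceless part $\hat h$, and boundary terms controlled by the hypothesis $\Pi + \bH \bar{\gamma} \ge 0$. The volume constraint translates at second order into $\int_\Omega f\, dv_{\bg} \ge -O(\|h\|^2)$, which pins down the constant mode of $f$; the Neumann Poincar\'e inequality $\int |\nabla f|^2 \ge \mu \int (f - \bar f)^2$ with $\mu > n - \tfrac{2}{n+1}$ then handles the non-constant modes. The main obstacle is the sharp spectral estimate: the coefficient $\tfrac{2}{n+1}$ must emerge exactly from a delicate balance between the $-(n-1)f$ term in the linearized scalar curvature, the $\tfrac12 R(\bg) f$ contribution from the variation of $dv_g$, the cosmological-constant multiplier in $\mathcal{F}$, and the volume-constrained mean of $f$. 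Getting all these constants to line up is the technical heart of the proof.
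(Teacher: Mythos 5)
Your skeleton (gauge--fix, expand a linear combination of $\int_\Omega R$, $\int_\Sigma H$ and the volume to second order about $\bg$, control boundary terms by $\Pi+\bH\bar\gamma\ge 0$ and the trace part by the Neumann gap) is the same circle of ideas as the paper's, and two of your ingredients are correct: $\bg$ is indeed critical for $\mathcal F$ with the coefficient $(n-1)(n-2)$ among metrics fixing the boundary metric, and $\mathcal F(g)-\mathcal F(\bg)\ge 2(n-1)[V(g)-V(\bg)]+2\int_\Sigma[H(g)-\bH]\,\vsg\ge 0$ follows from the hypotheses. But the entire analytic content of the theorem is the estimate you defer as ``the technical heart'', and your sketch of it has a genuine gap. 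First, the constant mode of $f=\tr_{\bg}h$ is not ``pinned down'' by the volume constraint: $V(g)\ge V(\bg)$ gives only the one-sided bound $\int_\Omega f\,\vbg\ge -O(\|h\|^2)$, while the dangerous direction is $f$ nearly constant and \emph{positive} of size $\|h\|_{C^0}$, where $\bnabla f\approx 0$ and the Neumann--Poincar\'e inequality gives nothing. The paper's proof survives precisely because it does \emph{not} collapse the linear volume term to ``$\ge 0$'': it keeps $2(n-1)(V(g)-V(\bg))$ with its favorable sign and absorbs the squared-mean term coming from the Poincar\'e inequality via $(V(g)-V(\bg))^2\le C\bigl(\int_\Omega|h|\,\vbg\bigr)(V(g)-V(\bg))$. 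Once you reduce the lower bound to $\mathcal F(g)-\mathcal F(\bg)\ge 0$ and ask for an estimate of the form $Q(h,h)\le -c\|h\|^2$ in which the constraint enters only through $\int f\ge -O(\|h\|^2)$, the mechanism that handles this mode is gone, and you have not supplied a replacement.

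Second, the quadratic form of your particular $\mathcal F$ is not the one the paper shows is coercive, and the difference matters at exactly the level of the constants you invoke. The paper pairs $R(g)-R(\bg)$ with $\vbg$ and weights the boundary term by $(2-\tr_{\bg}h)$, with a \emph{constant} multiplier $\l=-\frac1{n-1}$, which is what makes the boundary contribution reduce to the sign-controlled expression in $(h_{nn})^2\bH$, $\Pi(X,X)+\bH|X|^2$ and puts $|\bD h|^2+|\bnabla(\tr_{\bg}h)|^2$ on the good side. Your $\mathcal F$ uses $\int_\Omega R(g)\,dv_g$ and $2\int_\Sigma H(g)\,\vsg$, which differ from that combination by genuinely quadratic cross terms, essentially $\tfrac12\int_\Omega DR_{\bg}(h)\,\tr_{\bg}h\,\vbg$ and $\int_\Sigma \tr_{\bg}h\,DH_{\bg}(h)\,\vsg$. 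After integration by parts (in either divergence-free or Bianchi gauge) these produce a \emph{positive} multiple of $\int_\Omega|\bnabla(\tr_{\bg}h)|^2$ offsetting $-\tfrac14\int_\Omega|\bD h|^2$ (only $|\bD h|^2\ge\frac1n|\bnabla \tr_{\bg}h|^2$ in general), plus boundary terms of the type $\int_\Sigma \tr_{\bg}h\,\bigl(\p_{\bnu}\tr_{\bg}h-\div_\Sigma X\bigr)\,\vsg$ that are not controlled by $\Pi+\bH\bar\gamma\ge 0$. So the claimed decoupling of $Q$ into trace and traceless blocks is unsubstantiated, and the advertised role of the Neumann--Poincar\'e inequality (a \emph{lower} bound on $\int|\bnabla \tr_{\bg}h|^2$, useful only when that term carries a favorable sign) may not even be available for your $Q$; in particular it is not verified that the threshold $n-\frac2{n+1}$ emerges for this functional. (A smaller point: achieving the Bianchi gauge with a diffeomorphism fixing $\Sigma$ requires the Dirichlet problem for the operator $V\mapsto \bD^*\bD V-\Ric(V,\cdot)$-type gauge operator to have trivial kernel, which is not automatic when $\Ric(\bg)=(n-1)\bg>0$ and deserves justification, just as the paper leans on Brendle--Marques' Proposition 11 for the divergence-free gauge.) In short, the reduction to $\mathcal F$ is fine, but the step you postpone is precisely the content of the paper's Theorem 2.1, Corollary 2.1 and the proof of Theorem 3.1, and the specific way you propose to carry it out would not go through as described.
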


As a by-product of the method used to derive Theorem \ref{thm-VRH-rigidity-intro}, we  obtain
a  volume estimate for metrics close to the Euclidean metric in terms of the scalar curvature.

\begin{thm} \label{thm-intro-E-volume}
Let $ \Omega \subset \R^n $ be a bounded domain with smooth boundary $ \Sigma$.
Suppose $ \Pi + \bH \bar{\gamma} > 0$ (i.e. $  \Pi + \bH \bar{\gamma} $ is positive definite), where
$ \Pi $, $ \bH $ are the second fundamental form, the mean curvature of $ \Sigma$ in $ \R^n$
and  $ \bar{\gamma} $ is the metric on $ \Sigma$ induced from the Euclidean metric $ \bg$.
Let $ g $ be another metric on $ \bar{\Omega}$ satisfying
\begin{itemize}
\item $ H(g) \ge \bar{H}$, where $ H(g) $ is the mean curvature of $ \Sigma $ in $(\Omega, g)$
\item $ g $ and $ \bg $ induce the same metric on $ \Sigma$.
\end{itemize}
Given any point $ a \in \R^n$,  there exists a constant
$ \Lambda > \frac{ \max_{q \in \bar{\Omega}}  {| q - a |^2} }{4(n-1)}$, depending only on   $\Omega$ and $a $,
such that
if $ || g - \bg ||_{C^3(\bar{\Omega})} $ is sufficiently small, then
\begin{equation} \label{eq-main-formula-V-critical-app-5}
    V(g) - V(\bg)   \ge    \int_\Omega R(g)   \Phi \  \vbg
 \end{equation}
 where
 $ \Phi (x)  =   - \frac{1}{4 (n-1)} |x - a |^2 + \Lambda   > 0  $
 on $ \bar{\Omega}$.
\end{thm}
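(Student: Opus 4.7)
The plan is to derive the estimate from a weighted scalar-curvature identity whose weight $\Phi$ is chosen so that integration by parts exactly matches the linearizations of $R(g)$ and $V(g)$. A direct computation from $\Phi(x) = -\frac{1}{4(n-1)}|x-a|^2 + \Lambda$ gives
\[
\bnabla^2 \Phi = -\frac{1}{2(n-1)}\,\bg, \qquad \bar\Delta\, \Phi = -\frac{n}{2(n-1)},
\]
which are the Hessian--Laplace identities that drive the matching. Taking $\Lambda > \max_{q\in\bar\Omega}|q-a|^2/[4(n-1)]$ makes $\Phi > 0$ on $\bar\Omega$, as required.

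Setting $h := g - \bg$, I would use the standard expansions
\[
R(g) = -\bar\Delta \tr_{\bg} h + \div_{\bg}\div_{\bg} h + \mathcal{Q}_R, \qquad V(g) - V(\bg) = \tfrac{1}{2} \int_\Omega \tr_{\bg} h \, \vbg + \mathcal{Q}_V,
\]
where $\mathcal{Q}_R$ and $\mathcal{Q}_V$ are quadratic remainders bounded in $L^1$ by $\|h\|_{C^2}^2$. Multiplying the first expansion by $\Phi$, integrating by parts twice, and applying the identities for $\Phi$ above, the two second-order bulk terms collapse to $\tfrac{1}{2} \int_\Omega \tr_{\bg} h \, \vbg$, which exactly reproduces the leading term of $V(g) - V(\bg)$. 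The resulting identity has the form
\[
V(g) - V(\bg) - \int_\Omega R(g) \Phi \, \vbg = -\mathcal{B}(g,\bg,\Phi) + \mathcal{E}(h),
\]
where $\mathcal{B}$ is a boundary integral on $\Sigma$ and $\mathcal{E} = O(\|h\|_{C^2}^2)$, controllable by the $C^3$-smallness of $g - \bg$.

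I would then show that $\mathcal{B} \le 0$ with a margin that absorbs $|\mathcal{E}|$. The hypothesis $g|_\Sigma = \bg|_\Sigma$ forces the tangential components of $h$ to vanish along $\Sigma$, so only the normal--normal and normal--tangential components of $h$ enter $\mathcal{B}$. Using the first variation of mean curvature under such perturbations, $\mathcal{B}$ rearranges into $-\int_\Sigma \Phi (H(g) - \bH) \, \vsg$, which is $\le 0$ by the mean-curvature hypothesis, plus a term that, by virtue of $\bnabla^2\Phi = -\frac{1}{2(n-1)}\bg$, pairs the symmetric $2$-tensor $\Pi + \bH\bar\gamma$ against a quadratic form in the non-vanishing boundary components of $h$. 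Strict positive-definiteness of $\Pi + \bH\bar\gamma$ on the compact $\Sigma$ delivers a definite sign on this second piece.

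The principal obstacle is passing from a first-order identity to a genuine inequality: $\mathcal{E}$ is of second order in $h$, while the manifestly favorable part of $-\mathcal{B}$ is only first order at first sight. The resolution is to use strict positive-definiteness of $\Pi + \bH\bar\gamma$ together with the tangential vanishing of $h$ along $\Sigma$ to extract a true quadratic form in $h|_\Sigma$ with a definite sign. Once $\|g - \bg\|_{C^3}$ is sufficiently small, this quadratic form dominates $\mathcal{E}$ and closes the inequality $V(g) - V(\bg) \ge \int_\Omega R(g) \Phi \, \vbg$ stated in \eqref{eq-main-formula-V-critical-app-5}.
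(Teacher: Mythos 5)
Your plan reproduces the first-order bookkeeping correctly (indeed $DR^*_{\bg}(\Phi)=\tfrac12\bg$, so integrating $DR_{\bg}(h)$ against $\Phi$ produces $\tfrac12\int_\Omega\tr_{\bg}h\,\vbg$ plus boundary terms, and the boundary term $\Phi_{;\bnu}\,(\tr_{\bg}h-h(\bnu,\bnu))$ vanishes since $h|_{T\Sigma}=0$), but the way you propose to close the estimate fails at second order. Your error term $\mathcal{E}(h)$ is \emph{quadratic} in $h$ and lives in the interior: it contains the quadratic remainders of $R(g)$ (terms of the type $\int_\Omega|\bD h|^2\Phi\,\vbg$ with both signs) and of $V(g)$ (namely $\int_\Omega[\tfrac18(\tr_{\bg}h)^2-\tfrac14|h|^2]\,\vbg$). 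No boundary quadratic form in $h|_\Sigma$, however definite, can dominate these: take $h$ supported away from $\Sigma$, so that $\mathcal{B}=0$ while $\mathcal{E}\neq0$; and $C^3$-smallness does not help, because the favorable boundary form and the interior remainder have the same homogeneity in $h$. The actual argument cannot treat the quadratic interior terms as errors; it must show they carry a sign. This is done by first gauge-fixing: using \cite[Proposition 11]{BrendleMarques} one replaces $g$ by $\varphi^*(g)$ so that $h=\varphi^*(g)-\bg$ is divergence-free with respect to $\bg$, and then the second-order (Brendle--Marques type) expansion of $-2(V(g)-V(\bg))+\int_\Omega R(g)\,\l\,\vbg+\int_\Sigma(2-\tr_{\bg}h)(H(g)-\bH)\,\l\,\vsg$ with $\l=2\Phi$ shows that the interior quadratic part equals $-\lf(\tfrac14+\tfrac1{n-1}\ri)\int_\Omega(\tr_{\bg}h)^2\,\vbg-\tfrac14\int_\Omega(|\bD h|^2+|\bD(\tr_{\bg}h)|^2)\,\l\,\vbg$, which is nonpositive and in fact supplies the coercive term $-m\int_\Omega|\bD h|^2\,\vbg$. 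Only the genuinely cubic remainders are then absorbed by smallness, and even that needs an extra tool: the interior term $\int_\Omega|h|^3\,\vbg$ is not controlled by $\int_\Omega|\bD h|^2\,\vbg$ plus $\int_\Sigma|h|^2\,\vsg$ alone, and one needs a trace/extension inequality (Lemma 4.2 of the paper) bounding $\int_\Omega|h|^3\,\vbg$ by $\int_\Sigma|h|^3\,\vsg+\|h\|_{C^2}\int_\Sigma|h|^2\,\vsg+\int_\Omega|h||\bD h|^2\,\vbg$; this is precisely where the $C^3$ (rather than $C^2$) smallness hypothesis enters.

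Two further points in your boundary analysis need repair even granting the above. First, without the divergence-free gauge the boundary identities you implicitly invoke (rewriting normal derivatives of $h$ on $\Sigma$ in terms of $\Pi$, $\bH$, and expressing $H(g)-\bH$ through $DH_{\bg}(h)$ up to controlled errors) do not take the clean form required. Second, the boundary quadratic form is not sign-definite merely because $\Pi+\bH\bar{\gamma}>0$: it also contains the indefinite pieces $\Phi_{;\bnu}\,[-(h_{nn})^2-\tfrac12|X|^2]$ and $-h_{nn}\la X,\bnabla^\Sigma\Phi\ra$, whose coefficients do not involve the additive constant. The definite sign is obtained by exploiting the free constant $\Lambda$: the favorable part scales like $\Lambda\,[\bH (h_{nn})^2+\Pi(X,X)+\bH|X|^2]$, so choosing $\Lambda$ large (this is why the theorem asserts existence of a possibly large $\Lambda$, not merely positivity of $\Phi$) makes the whole boundary contribution bounded above by $-c\int_\Sigma|h|^2\,\vsg$, which is then also what absorbs the boundary cubic errors and the boundary terms produced by the $\int_\Omega|h|^3$ estimate.
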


Theorem \ref{thm-intro-E-volume} may be compared to a previous theorem of Bartnik \cite{Bartnik86},
which estimates the total mass \cite{ADM61} of an asymptotically flat metric that is a perturbation of the
Euclidean metric.

\begin{thm}[Bartnik \cite{Bartnik86}]
Let  $ g $ be an  asymptotically flat metric on $ \R^3$.
If $g$ is sufficiently close to the Euclidean metric $\bg$  (in certain weighted Sobolev space), then
\be
16  \pi \mathfrak{m} (g) \ge \int_{\R^3} R(g) \ \vbg
\ee
where $ \mathfrak{m} (g) $ is the total mass of $ g$. 
\end{thm}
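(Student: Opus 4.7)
The plan is to represent the ADM mass as the volume integral of the linearization of scalar curvature at $\bg$, then show the nonlinear remainder integrates to a non-positive quantity when $g$ is close to $\bg$.

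Setting $h := g - \bg$ with indices contracted in $\bg$, the standard ADM formula
\be
16\pi \mathfrak{m}(g) \,=\, \lim_{r \to \infty} \int_{S_r}\lf(\p_j h_{ij} - \p_i h_{jj}\ri)\nu^i\, d\sigma
\ee
combined with the weighted Sobolev decay of $h$ at infinity and Stokes' theorem gives
\be
16\pi \mathfrak{m}(g) \,=\, \int_{\R^3} L h \,\vbg, \qquad L h := \p_i \p_j h_{ij} - \Delta h_{ii},
\ee
where $L h$ is the linearization of the scalar curvature at $\bg$.  Expanding $R(g) = L h + Q(h,\p h,\p^2 h)$ with $Q$ vanishing at least quadratically in $h$ and subtracting yields the clean identity
\be
16\pi \mathfrak{m}(g) - \int_{\R^3} R(g)\,\vbg \,=\, -\int_{\R^3} Q\,\vbg.
\ee
Hence the theorem reduces to showing $\int_{\R^3} Q\,\vbg \le 0$ whenever $\|g-\bg\|$ is sufficiently small.

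To establish this I would split $Q = Q^{(2)} + Q^{(\ge 3)}$ into its purely quadratic and higher-order parts.  Using the decay of $h$ to justify integration by parts, I would rewrite $\int Q^{(2)}\,\vbg$ as a quadratic form in $\p h$ alone (all boundary contributions at infinity vanish by the decay) and then verify by direct computation that this form is non-positive.  The higher-order remainder $Q^{(\ge 3)}$ is pointwise dominated by $C(|h| + |\p h|)|\p h|^2$, so its integral is controlled by $C\|h\|_{C^1}\int|\p h|^2\,\vbg$ and absorbed into the leading negative quadratic once $\|h\|_{C^2}$ is small enough.

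The principal challenge is verifying the sign of $\int Q^{(2)}\,\vbg$ in full generality; this amounts to computing the second variation of $\int R(g)\,\vbg$ about flat space and identifying the resulting form as non-positive.  As a consistency check, for a conformally flat perturbation $g = (1+\phi)^4\bg$ with $\phi$ decaying at infinity, the identity $R(g) = -8(1+\phi)^{-5}\Delta\phi$ together with $\int \phi\,\Delta\phi\,\vbg = -\int|\nabla\phi|^2\,\vbg$ yields
\be
\int_{\R^3} R(g)\,\vbg \,=\, 16\pi\mathfrak{m}(g) \,-\, 40\int_{\R^3}|\nabla\phi|^2\,\vbg \,+\, O(\phi^3),
\ee
so the inequality holds with an explicit non-negative gap, the general case proceeding by analogous integration-by-parts manipulations.
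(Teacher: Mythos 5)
You should first note that the paper itself does not prove this statement: it is quoted from Bartnik \cite{Bartnik86} only for comparison with Theorem \ref{thm-intro-E-volume}, so your argument has to stand on its own. It does not, and the gap is exactly at the point you call the ``principal challenge'': the integrated quadratic term is \emph{not} sign-definite for general decaying $h$. Expanding about the flat metric and discarding the pure divergence terms (whose fluxes at infinity either vanish or produce the mass), one finds
\be
\int_{\R^3} Q^{(2)}(h) \ \vbg \;=\; \int_{\R^3} \Big[ -\tfrac14 |\p h|^2 + \tfrac12 |\div_{\bg} h|^2 - \tfrac14 |\p (\tr_{\bg} h)|^2 \Big] \ \vbg ,
\ee
and this form is indefinite. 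For instance, for $h = \phi\,\mathrm{diag}(-2,1,1)$ (which is trace free) the integrand reduces to $\tfrac32|\p\phi|^2 - \tfrac12\big(4(\p_1\phi)^2+(\p_2\phi)^2+(\p_3\phi)^2\big)$, i.e. $\int Q^{(2)} = \int \big[\tfrac12 (\p_1 \phi)^2 - (\p_2\phi)^2 - (\p_3\phi)^2\big]$, which is \emph{positive} when $\phi$ varies essentially only in $x_1$; a direct check is that the metric $(1-2t\phi(x_1))dx_1^2 + (1+t\phi(x_1))(dx_2^2+dx_3^2)$ has $\int R \, dx_1 = \tfrac{t^2}{2}\int \dot\phi^2 + O(t^3) > 0$. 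So the claim ``$\int Q \le 0$ whenever $\|g-\bg\|$ is small'', on which your whole reduction rests, is false as stated, and your conformal consistency check only probes a direction (pure trace) in which the form happens to be positive; it cannot detect the bad mixed longitudinal/trace directions.

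The missing ingredient is a gauge choice, and this is precisely what is hidden in the hypothesis ``sufficiently close in a certain weighted Sobolev space''. Bartnik works with harmonic coordinates, for which the linearized condition $\div_{\bg} h = \tfrac12\, d(\tr_{\bg} h)$ turns the above form into $-\int\big[\tfrac14|\p h|^2 + \tfrac18 |\p(\tr_{\bg}h)|^2\big] \le 0$; the present paper accomplishes the analogous step with the Brendle--Marques gauge (\cite[Proposition 11]{BrendleMarques}), which produces $\div_{\bg} h = 0$ and hence $\int Q^{(2)} = -\tfrac14\int\big(|\p h|^2 + |\p(\tr_{\bg}h)|^2\big)$, exactly the sign-definite structure appearing in Theorem \ref{thm-main-formula-1} and Theorem \ref{thm-RH-zero}. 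Moreover, after gauge-fixing one must still account for the fact that, unlike the mass, $\int R(g)\,\vbg$ is \emph{not} invariant under the diffeomorphism used to impose the gauge; the change it induces is of the same quadratic order as the leading form, so it must be controlled by the weighted smallness assumption rather than ignored. Two smaller issues: your identity $16\pi\mathfrak m(g)=\int Lh\,\vbg$ needs the decay hypotheses ($R(g)\in L^1$, weight $\tau>\tfrac12$) to kill the quadratic fluxes at infinity, and since $Q$ contains terms of the schematic form $h\,\p^2 h$, a pointwise bound of the remainder by $C(|h|+|\p h|)|\p h|^2$ is not available before these terms are reorganized into divergences, as in Proposition \ref{prop-BM-R}.
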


Our proofs of Theorems \ref{thm-intro-Vhalfsphere} - \ref{thm-intro-E-volume}
follow a recent perturbation analysis of Brendle and Marques in  \cite{BrendleMarques},
where they established a  scalar curvature rigidity theorem
for ``small" geodesic balls in $\mathbb{S}^n$.

\begin{thm}[Brendle and Marques \cite{BrendleMarques}]    \label{thm-BM}
Let $\Omega  \subset \mathbb{S}^n $ be a geodesic ball of radius $\delta$.
Suppose
\be \label{eq-condition-geospheres}
\cos\delta\ge \frac{2}{\sqrt{n+3}}.
\ee
Let  $ \bg $ be the standard metric on $ \mathbb{S}^n$. Let  $g$ be another  metric on $\Omega$  with the properties
\begin{itemize}
\item $ R(g) \ge  n (n-1) $ at each point in $ \Omega$
\item $ H(g) \ge \bH   $ at each point on $ \partial \Omega$
\item $ g $ and $ \bg$ induce the same metric on $ \p \Omega$
\end{itemize}
where $ R(g)$ is  the scalar curvature of $ g$, and
$H (g)$, $\bH$ are the mean curvature of $ \p \Omega$ in $(\Omega, g)$,
$(\Omega, \bg)$.
If $ g - \bg $ is sufficiently small in the $ C^2$-norm, then $\varphi^*(g)=\bg$
 for some diffeomorphism  $ \varphi: \Omega \rightarrow \Omega$
 such that  $\varphi|_{\p\Omega}=\text{\rm id}$.
\end{thm}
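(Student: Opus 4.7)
The plan is to follow the standard perturbation framework for scalar curvature rigidity on manifolds with boundary: after fixing a boundary-preserving gauge via a diffeomorphism, derive a Pohozaev-type identity that expresses $\int_\Omega (R(g) - n(n-1)) f \,\vbg$ and $\int_\S (H(g) - \bH) f \,\vsg$ in terms of a pointwise quadratic form $\mathcal{Q}(h)$ in $h := g-\bg$, and then use the radius condition (\ref{eq-condition-geospheres}) to show $\mathcal{Q}$ is coercive. Positivity of $f$ and the sign assumptions on $R(g) - n(n-1)$ and $H(g) - \bH$ will then force $h=0$.

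First I would apply the implicit function theorem, using smallness of $g-\bg$ in $C^2$, to produce a diffeomorphism $\varphi: \Omega \to \Omega$ with $\varphi|_\S = \mathrm{id}$ such that $\tilde g := \varphi^* g$ satisfies a divergence-type gauge condition on $\bg$ (for instance $\mathrm{div}_{\bg}(\tilde g - \bg) = \tfrac12 d\,\tr_{\bg}(\tilde g - \bg)$). Because the induced metric on $\S$ is preserved, the tangential-tangential part of $h := \tilde g - \bg$ vanishes on $\S$, which will be essential for the boundary integrations by parts below.

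Next I would choose a positive test function $f$ on $\bar\Omega$ solving $\bar\nabla^2 f + f\bg = 0$; restrictions of ambient linear functions from $\R^{n+1}$ to $\mathbb{S}^n$ give an $(n+1)$-parameter family of such $f$, and a strictly positive choice on $\bar\Omega$ exists because $\Omega$ lies in an open hemisphere. Writing $g_t = \bg + t h$, the linearizations of scalar and mean curvature combined with two integrations by parts, using the Hessian equation for $f$ to move derivatives off $h$, yield a Pohozaev-type identity of the schematic form
\[
\int_\Omega (R(g) - n(n-1)) f \,\vbg \ + \ 2 \int_\S (H(g) - \bH) f \,\vsg \ = \ \mathcal{Q}(h) \ + \ O(\|h\|_{C^2}^3),
\]
where $\mathcal{Q}(h)$ is a pointwise quadratic expression in $h$ and $\bar\nabla h$ involving $f$ and its derivatives as coefficients.

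The main obstacle, and the source of the sharp radius bound (\ref{eq-condition-geospheres}), is establishing coercivity $\mathcal{Q}(h) \ge c \|h\|_{L^2}^2$ for $h$ in the chosen gauge with vanishing tangential boundary trace. This reduces to a spectral estimate for a second-order operator on symmetric $2$-tensors over $(\Omega, \bg)$ with a mixed boundary condition; decomposing $h$ into its trace and trace-free parts, one tracks how the Rayleigh quotient depends on $\delta$, and the threshold $\cos\delta \ge 2/\sqrt{n+3}$ is precisely what pushes the lowest eigenvalue above zero. Once $\mathcal{Q}$ is coercive, the hypotheses $R(g) \ge n(n-1)$, $H(g) \ge \bH$ and $f>0$ make the left side of the identity nonpositive, so $\mathcal{Q}(h) \le O(\|h\|_{C^2}^3)$; for $\|g-\bg\|_{C^2}$ sufficiently small this forces $h = 0$, giving $\varphi^* g = \bg$ as required.
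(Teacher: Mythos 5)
Your overall skeleton (boundary-fixing gauge, the static potential $f=\cos r$ with $\bar\nabla^2 f+f\bg=0$, a Pohozaev-type identity, absorption of cubic errors by smallness) is the same as the argument the paper runs through Theorem \ref{thm-main-formula-1} and Theorem \ref{thm-BM-convexS}, but the step you label ``the main obstacle'' --- coercivity of $\mathcal{Q}$ and the role of the radius bound --- is exactly the part you do not prove, and the mechanism you propose for it is not the one that works. There is no spectral estimate, mixed eigenvalue problem, or Rayleigh-quotient analysis on symmetric $2$-tensors whose lowest eigenvalue crosses zero at $\cos\delta=2/\sqrt{n+3}$. With $\l=\cos r$ and $\div_{\bg}h=0$, the interior quadratic form is \emph{pointwise} favorable for every geodesic ball strictly inside the hemisphere: since $\nabla^2_{\bg}\l=-\l\,\bg$ and $h^{ij}h^{kl}\bar R_{ikjl}=(\tr_{\bg}h)^2-|h|^2$, the bulk terms in \eqref{eq-main-formula-1} collapse to $-\frac12\l\lf(|h|^2+(\tr_{\bg}h)^2\ri)-\frac14\l\lf(|\bD h|^2+|\bD(\tr_{\bg}h)|^2\ri)$, which is coercive with no condition on $\delta$ beyond $\cos r>0$. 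The condition $\cos\delta\ge 2/\sqrt{n+3}$ enters only through the \emph{boundary} quadratic form in $(h_{nn},X)$, namely $-\frac14 h_{nn}^2\bH\l-\frac12(\Pi(X,X)+\bH|X|^2)\l+\l_{;n}(-(h_{nn})^2-\frac12|X|^2)-h_{nn}\la X,\bnabla^\Sigma\l\ra$: for the centered ball $\l_{;n}=-\sin\delta$ and the cross term vanishes, so nonnegativity is the elementary pointwise inequality $\frac14\bH\cos\delta\ge\sin\delta$, i.e.\ $\bH=(n-1)\cot\delta\ge 4\tan\delta$, which is precisely $\cos\delta\ge 2/\sqrt{n+3}$. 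Your proposal asserts the coercivity threshold without this computation, so the heart of the proof is a placeholder, and the route you sketch for filling it (a $\delta$-dependent spectral gap) points in the wrong direction.

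Two smaller issues. First, the gauge matters: the identity \eqref{eq-main-formula-1} and all the boundary identities \eqref{eq-bdryfact-1}--\eqref{eq-bdryfact-4} use $\div_{\bg}h=0$ (this is what \cite[Proposition 11]{BrendleMarques} provides with $\varphi|_{\Sigma}=\mathrm{id}$); your harmonic-type gauge $\div_{\bg}h=\frac12 d(\tr_{\bg}h)$ would change both the bulk and boundary quadratic forms, so you cannot quote the same identity. Second, concluding from $\mathcal{Q}(h)\le O(\|h\|_{C^2}^3)$ with $\mathcal{Q}(h)\ge c\|h\|_{L^2}^2$ does not force $h=0$: a raw cube of the $C^2$ norm is not dominated by $\|h\|_{L^2}^2$ times a small factor. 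The error must be estimated in the form $C\|h\|_{C^1}\lf[\int_\Omega(|h|^2+|\bD h|^2)\,\vbg+\int_\Sigma|h|^2\,\vsg\ri]$, as in the paper, so that it can be absorbed into the coercive bulk and boundary terms once $\|h\|_{C^1}$ is small.
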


In Theorem \ref{thm-BM},  the condition \eqref{eq-condition-geospheres} is  equivalently to
 \be \label{eq-condtion-geospheres}
 \bH \ge 4 \tan \delta
 \ee
 because  the mean curvature $\bH$ of $ \p B(\delta) $  is 
 $  (n-1) \frac{ \cos \delta }{ \sin \delta} $.
 As another application  of the formulas in Section  \ref{section-formula},
we obtain a  generalization of Theorem \ref{thm-BM} to
 convex domains in $ \mathbb{S}^n$.

\begin{thm} \label{thm-BM-generalization}
Let  $ \Omega \subset \mathbb{S}^n $  be a smooth domain
contained in a geodesic ball $ B$ of radius less than  $ \frac{\pi}{2} $.
Let $ \bg $ be the  standard metric on $ \mathbb{S}^n$.
Let $ \Pi$, $ \bH$ be the second fundamental form, the mean curvature of $  \p \Omega$
in $(\Omega, \bg)$.
Suppose $ \Omega $ is convex, i.e. $ \Pi \ge 0$.
At $ \p \Omega $, suppose
\be \label{eq-condition-convexS-intro}
\bH \ge 4 \tan{r}
\ee
 where $ r $ is the $\bg$-distance to the center of $ B$.
Then the conclusion of  Theorem \ref{thm-BM} holds on $ \Omega$.
\end{thm}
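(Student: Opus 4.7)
The plan is to adapt the conformal perturbation argument of Brendle and Marques \cite{BrendleMarques} from the setting of a geodesic ball to an arbitrary convex $\Omega$, by replacing the intrinsic test function with one tied to the enclosing ball $B$ and by letting the convexity of $\p\Omega$ absorb the boundary terms that are no longer algebraically free.

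Denote by $c$ the center of $B$, set $r(x) = d_{\bg}(x, c)$, and define the test function
$$
\Phi(x) \ := \ \cos r(x).
$$
Since $B$ has radius less than $\tfrac{\pi}{2}$, $\Phi$ is smooth and strictly positive on $\bar\Omega$, and on the round sphere it satisfies the rigid Hessian identity $\bnabla^2 \Phi + \Phi \,\bg = 0$, which is the algebraic condition on which the weighted integral identities of Section \ref{section-formula} are built. Plugging $\Phi$ into those identities and writing $h = g - \bg$, the combined scalar and boundary integral
$$
\int_\Omega \big( R(g) - n(n-1) \big) \Phi \ \vbg \ + \ 2 \int_{\p\Omega} \big( H(g) - \bH \big) \Phi \ \vsg
$$
is non-negative by the curvature hypotheses and $\Phi > 0$, and at the same time, modulo a cubic error in $h$, equals a quadratic form in $h$ whose interior part is controlled exactly as in \cite{BrendleMarques} and whose boundary part splits into (i) a tangential form weighted by $\Pi$, which is $\ge 0$ by convexity, and (ii) a term weighted by the scalar coefficient
$$
\bH \Phi + 4 \Phi_{\bnu} \ = \ \bH \cos r - 4 \sin r \, \la \bnabla r, \bnu \ra,
$$
where $\Phi_{\bnu}$ is the outward $\bg$-normal derivative of $\Phi$ along $\p\Omega$. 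Because $|\la \bnabla r, \bnu \ra| \le 1$ and $\sin r \ge 0$, this coefficient is bounded below by $\bH \cos r - 4 \sin r \ge 0$, the last inequality being exactly the hypothesis $\bH \ge 4 \tan r$. With every contribution carrying a favorable sign, the implicit function theorem step of \cite{BrendleMarques}, applied in the slice of metrics inducing the same metric as $\bg$ on $\p\Omega$, produces the required diffeomorphism $\varphi$ with $\varphi|_{\p\Omega} = \text{\rm id}$ and $\varphi^*(g) = \bg$.

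The main obstacle I expect is verifying the pointwise sign of the boundary integrand at a point of $\p\Omega$ where $\bnabla r$ has a nontrivial tangential component. For Brendle--Marques the geodesic sphere $\p B(\delta)$ is a level set of $r$, so $\bnabla r = \bnu$ and the coefficient collapses to $\cos r \, (\bH - 4 \tan r)$, which is exactly \eqref{eq-condtion-geospheres}. For a general convex $\Omega$ the tangential component of $\bnabla r$ couples, through the expansion of the second fundamental form, to $\Pi$; the expected outcome is that $\Pi \ge 0$ together with the pointwise form of \eqref{eq-condition-convexS-intro} absorbs this coupling with the correct sign, so that no integrated mean curvature bound (which would be weaker than the pointwise hypothesis) is needed.
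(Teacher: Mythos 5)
Your setup (the static potential $\l=\cos r$ of the enclosing ball, the identity of Section \ref{section-formula}, and the sign of the interior terms) matches the paper's route, but the proof has a genuine gap exactly where you flag it, and the mechanism you propose to close it is not the one that works. After inserting $\l=\cos r$ into \eqref{eq-main-formula-1}, the boundary contribution is not ``a tangential form weighted by $\Pi$ plus a scalar coefficient $\bH\Phi+4\Phi_{\bnu}$''; it is the quadratic form in $(h_{nn},X)$
\begin{equation*}
\lf(\tfrac14\bH\cos r-\sin r\cos\theta\ri)(h_{nn})^2+\tfrac12\lf(\bH\cos r-\sin r\cos\theta\ri)|X|^2+\tfrac12\,\Pi(X,X)\cos r- h_{nn}\la X,\bnabla^\Sigma\cos r\ra ,
\end{equation*}
where $\theta$ is the angle between $\bnu$ and $\bnabla r$. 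Your estimate $|\la\bnabla r,\bnu\ra|\le 1$ only controls the two diagonal coefficients (the $\l_{;n}$ terms); it says nothing about the off-diagonal cross term $-h_{nn}\la X,\bnabla^\Sigma\l\ra$, whose size is $|h_{nn}|\,|X|\sin r\sin\theta$ and which is present precisely when $\bnabla r$ has a tangential component. Convexity does not absorb it either: $\Pi(X,X)\ge 0$ simply lets you discard that term, and it cannot compensate a coupling between $|X|$ and $h_{nn}$, since $\Pi\ge 0$ carries no quantitative lower bound. So as written, the nonnegativity of the boundary integrand is asserted, not proved.

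What the paper does at this point (Theorem \ref{thm-BM-convexS}) is a pointwise discriminant analysis of the $2\times 2$ quadratic form above: nonnegativity for all $(|h_{nn}|,|X|)$ is equivalent to
\begin{equation*}
\sin^2 r\,\sin^2\theta\ \le\ 2\lf(\tfrac14\bH\cos r-\sin r\cos\theta\ri)\lf(\bH\cos r-\sin r\cos\theta\ri),
\end{equation*}
which after solving the quadratic in $\bH\cos r$ becomes $\bH\ge\frac{5\cos\theta+\sqrt{\cos^2\theta+8}}{2}\tan r$; since this function of $\theta$ is maximized at $\theta=0$ with value $4$, the hypothesis $\bH\ge 4\tan r$ suffices, and the constant $4$ is exactly the worst case of this balance (it is not obtained by the crude bound $|\la\bnabla r,\bnu\ra|\le 1$). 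To repair your argument you must carry out this (or an equivalent Cauchy--Schwarz) absorption of the cross term; once that is done, the remaining steps you describe — gauge-fixing $h=\varphi^*(g)-\bg$ to be $\bg$-divergence-free via \cite[Proposition 11]{BrendleMarques} (not an implicit function theorem in the metric slice) and concluding $h\equiv 0$ from the integral inequality — coincide with the paper's proof.
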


Theorem \ref{thm-BM-generalization} is an immediate consequence of Theorem \ref{thm-BM-convexS}
in Section \ref{rem-zero}.
In a simpler setting, where  the background metric $ \bg$  is a flat metric,  we have

\begin{thm}\label{thm-intro-RH-zero}
Let $ \Omega $ be a compact manifold with smooth boundary $ \Sigma$.
Suppose there is a flat metric $ \bg $ on $ \Omega$ such that
$ \Pi  + \bH \bar{\gamma} \ge 0$ (i.e. $ \Pi + \bH \bar{\gamma}$ is positive semi-definite),
 where
$ \Pi $, $ \bH$ are the second fundamental form, the mean curvature of $ \Sigma$,
and  $ \bar{\gamma} $ is the induced metric on $ \Sigma$.
Given another metric $ g $ on $ \Omega$ such that
\begin{itemize}
\item $ R(g) \ge  0 $ on $ \Omega$
\item $ H(g) \ge \bH  $ at $ \Sigma $
\item $ g   $ and $ \bg$ induce the same metric on $ \Sigma $,
\end{itemize}
if $ || g - \bg  ||_{C^2(\bar{\Omega})} $ is sufficiently small, then
 $ \varphi^*(g ) = \bg$ for some diffeomorphism $\varphi : \Omega \rightarrow \Omega$
 with  $ \varphi |_{\Sigma} = \mathrm{id} $.
\end{thm}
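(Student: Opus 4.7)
The plan is to follow the perturbation framework of Brendle--Marques \cite{BrendleMarques} adapted to a flat background. The first step is to reduce to a scalar-flat metric by a conformal deformation. I would solve the Dirichlet problem
$$-\frac{4(n-1)}{n-2}\Delta_g u + R(g)\, u = 0 \text{ in } \Omega, \qquad u|_\Sigma = 1.$$
For $g$ sufficiently close to $\bg$, this problem has a unique positive solution $u$ close to $1$. Since $R(g) \ge 0$, $u$ is $g$-subharmonic, hence $0 < u \le 1$ in $\Omega$ and the Hopf lemma gives $\p_\nu u \ge 0$ on $\Sigma$. Setting $\tilde g = u^{4/(n-2)} g$ one obtains a metric with $R(\tilde g) = 0$, the same induced boundary metric as $\bg$, and $H(\tilde g) = H(g) + \tfrac{2(n-1)}{n-2}\p_\nu u \ge H(g) \ge \bH$. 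Replacing $g$ by $\tilde g$, we may assume from now on that $R(g) \equiv 0$ while retaining $H(g) \ge \bH$ and all other hypotheses.

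Next, set $h = g - \bg$ and expand. Since $\bg$ is flat, the linearization of scalar curvature is $R'(h) = -\Delta(\tr h) + \div\, \div\, h$, and
$$R(g) = R'(h) + Q(h,\nabla h) + O(|h|^3)$$
for a universal quadratic form $Q$. Integrating over $\Omega$ with the flat weight $\Phi \equiv 1$ (no distance term being necessary in the absence of a preferred center, in contrast to the Euclidean estimate of Theorem \ref{thm-intro-E-volume}) and applying the divergence theorem, together with the vanishing of $h$ in directions tangent to $\Sigma$, I would derive an identity of the shape
$$\int_\Omega R(g)\,\vbg + 2\int_\Sigma (H(g) - \bH)\,\vsg = \int_\Omega \tilde Q(h,\nabla h)\,\vbg + O(\|h\|_{C^2}^3),$$
where the boundary contributions involving $\Pi$ and $\bH$ have been absorbed into $\tilde Q$. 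Because both terms on the left-hand side are nonnegative by the previous step, the quadratic functional on the right must be nonnegative up to a cubic error.

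The final step is gauge fixing and coercivity. Using a diffeomorphism of $\Omega$ that restricts to the identity on $\Sigma$, I would place $h$ in a divergence gauge such as $\div_{\bg}\bigl(h - \tfrac12 (\tr h)\bg\bigr) = 0$, exactly as in \cite{BrendleMarques}. A Bochner-type computation, in which the boundary hypothesis $\Pi + \bH\,\bar\gamma \ge 0$ is used to absorb the boundary integral produced by integration by parts, should then yield a coercivity estimate of the form
$$\tilde Q(h,\nabla h) \le -c\,\|h\|_{H^1(\Omega)}^2.$$
Combined with the integrated identity this gives $\|h\|_{H^1}^2 \le C\|h\|_{C^2}^3$, and smallness of $h$ in $C^2$ forces $h \equiv 0$. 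Hence $\varphi^* g = \bg$ for a suitable diffeomorphism fixing $\Sigma$, and undoing the conformal step (which then necessarily has $u \equiv 1$) finishes the proof.

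The heart of the argument, and the main obstacle, is the coercivity estimate: one must verify that, in the chosen gauge, the quadratic form $\tilde Q$ together with the boundary terms is genuinely negative-definite modulo the gauge kernel. The hypothesis $\Pi + \bH\,\bar\gamma \ge 0$ supplies precisely the structural positivity needed to control the boundary integral arising from integration by parts; it plays the role of the flat analogue of the Neumann eigenvalue hypothesis $\mu > n - \tfrac{2}{n+1}$ appearing in Theorem \ref{thm-VRH-rigidity-intro}, which here requires no volume constraint because the background sectional curvature is zero.
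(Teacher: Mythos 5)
Your overall strategy is the same as the paper's: gauge-fix $h=g-\bg$, integrate the expansion of $R(g)$ (against the constant function $\l\equiv 1$, which is exactly what the paper does in its proof via Theorem \ref{thm-main-formula-1}, since $DR^*_{\bg}(1)=-\Ric(\bg)=0$ for flat $\bg$), and use $\Pi+\bH\bar\gamma\ge 0$ to control the boundary terms. The preliminary conformal reduction to $R(\tilde g)=0$ is not needed for this route (the inequality $R(g)\ge 0$ enters only through the sign of $\int_\Omega R(g)\l$), and your claim that undoing it ``necessarily has $u\equiv1$'' is itself an argument you omit (one needs $H(\tilde g)=\bH$ from the isometry fixing $\Sigma$, hence $\p_\nu u\le 0$, against the Hopf lemma). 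That step is harmless but adds nothing.

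The genuine gap is the step you yourself flag as the ``main obstacle'': the coercivity of the quadratic form is precisely the content of the paper's Section \ref{section-formula} (Lemma \ref{lma-R-R}, Proposition \ref{prop-estimate-RbR}, Theorem \ref{thm-main-formula-1}), and you assume it rather than prove it, and moreover state it in a form that is not correct. First, the gauge: the paper (via \cite[Proposition 11]{BrendleMarques}) arranges $\div_{\bg}h=0$, not $\div_{\bg}(h-\tfrac12(\tr_{\bg}h)\bg)=0$; the identity that produces the clean bulk term $-\tfrac14(|\bD h|^2+|\bnabla(\tr_{\bg}h)|^2)$ and the exact boundary quadratic $-\tfrac14 (h_{nn})^2\bH-\tfrac12(\Pi(X,X)+\bH|X|^2)$ uses $\div_{\bg}h=0$ throughout (e.g.\ \eqref{eq-bdryfact-3}, \eqref{eq-bdryfact-4}); in your gauge the sign structure is not established. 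Second, with $\l\equiv1$ and $\bg$ flat there is no zeroth-order term: the quadratic form controls only $\|\bD h\|_{L^2}^2$ and the (possibly degenerate, since $\Pi+\bH\bar\gamma$ is merely semi-definite) boundary quadratic, so an estimate of the type $\tilde Q\le -c\|h\|_{H^1(\Omega)}^2$ is stronger than what the computation gives, and controlling $\|h\|_{L^2(\Omega)}$ and the cubic boundary error $|h|^2|\bD h|+|h|^3$ requires an extra argument (trace inequality plus the product structure of the error terms, as in \eqref{eq-trace bound}; note also that the interior error has the form $|h||\bD h|^2$ with no $|h|^3$ term only because flat metrics are Ricci flat, Remark \ref{rem-BM-R}). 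Third, your closing inequality $\|h\|_{H^1}^2\le C\|h\|_{C^2}^3$ does not force $h\equiv0$ (it is consistent with small nonzero $h$); the correct conclusion comes from the fact that the cubic errors are bounded by $\|h\|_{C^1}$ times the same quadratic integrals appearing on the good side, so that for $\|h\|_{C^1}$ small they are absorbed and the quadratic integrals must vanish. As written, the heart of the proof is deferred and the surrounding bookkeeping would not close.
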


Similar calculation at the infinitesimal level  provides  examples
of compact  $3$-manifolds of nonnegative scalar curvature
whose  boundary  surface  does not have positive Gaussian curvature but
still has positive Brown-York mass  \cite{BYmass1, BYmass2}.
We include this in the end of the paper to  compare with known results in \cite{ShiTam02}.

\begin{thm} \label{thm-intro-BYmass}
Let $ \Sigma \subset \R^n$ be a connected, closed hypersurface satisfying
$ \Pi + \bH \bar{\gamma} \ge 0$,
 where
$ \Pi $, $ \bH$ are the second fundamental form, the mean curvature of $ \Sigma$,
and  $ \bar{\gamma} $ is the induced metric on $ \Sigma$.
Let $ \Omega$ be the domain enclosed by $ \Sigma$ in $ \R^n$.
Let $ h $ be any nontrivial $(0,2)$ symmetric tensor on $ \Omega $ satisfying
\be \label{eq-conditionh}
\div_{\bg} h = 0, \
\ \tr_{\bg} h = 0, \
h |_{T \Sigma} = 0.
\ee
Let $ \{ g(t) \}_{ | t | < \ep }$ be a $1$-parameter family of metrics on $ \Omega$ satisfying
\be \label{eq-conditiong}
g(0) = \bg, \ \ g^\prime(0) = h, \  \ R (g(t) ) \ge 0 , \
g(t) |_{T \Sigma} = \bg |_{T \Sigma} .
\ee
Then
\be
\int_\Sigma \bH  \vsg > \int_\Sigma  H( g(t) )  \vsg
\ee
for small $ t \neq 0$, where $ H(g(t))$ is the mean curvature of $ \Sigma $ in $(\Omega, g(t))$.
\end{thm}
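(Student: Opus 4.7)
The plan is to show that the function $\M(t) := \int_\Sigma [\bH - H(g(t))]\,\vsg$ satisfies $\M(0) = \M'(0) = 0$ and $\M''(0) > 0$ under the stated hypotheses; Taylor expansion at $t=0$ then yields $\M(t) > 0$ for all small $t \ne 0$. Because $g(t)$ induces the fixed metric $\bg|_{T\Sigma}$ on $\Sigma$, the area element is $\vsg$ for every $t$, so no first-order variation of the area form enters.

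For the first variation I would work in Fermi coordinates $(r,x)$ near $\Sigma$ relative to $\bg$, decomposing $h = a\,dr^2 + 2 b_i\,dr\,dx^i + c_{ij}\,dx^i dx^j$. The hypothesis $h|_{T\Sigma}=0$ gives $c_{ij}|_{r=0}=0$, after which $\tr_{\bg} h = 0$ forces $a|_{r=0}=0$. The standard variational formula for mean curvature then reduces to a combination of $\partial_r a|_{r=0}$ and a $\Sigma$-tangential divergence of $b$; the first is identified with $-\div_\Sigma b$ via the $r$-component of $\div_{\bg} h = 0$ at $\Sigma$, and integration over the closed surface $\Sigma$ kills both terms. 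Hence $\M'(0) = 0$.

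To pass to second order, introduce the Einstein--Hilbert--Gibbons--Hawking--York functional
\[
\F(g) := \int_\Omega R(g)\, dv_g + 2\int_\Sigma H(g)\,\vsg,
\]
whose Euler--Lagrange equation among variations fixing $g|_{T\Sigma}$ is the vacuum Einstein equation, so the flat metric $\bg$ is a critical point and $\F(g(t)) - \F(\bg) = \tfrac{t^2}{2}\F''[h,h] + O(t^3)$ depends only on $h = g'(0)$. The identity
\[
2\M(t) = \int_\Omega R(g(t))\, dv_{g(t)} - \bigl[\F(g(t)) - \F(\bg)\bigr],
\]
combined with $R(g(t)) \ge 0$, yields $\M''(0) \ge -\tfrac12 \F''[h,h]$. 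A Fermi-coordinate integration by parts of $\F''[h,h]$ on TT tensors with $h|_{T\Sigma}=0$ on flat $\bg$ should then produce
\[
-\F''[h,h] = \int_\Omega |\nabla h|^2\,\vbg + \int_\Sigma Q(h)\,\vsg,
\]
where $Q(h) \ge 0$ is a quadratic boundary form in $h(\nu,\cdot)|_{T\Sigma}$ whose pointwise nonnegativity is forced by $\Pi + \bH \bar\gamma \ge 0$.

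Vanishing of the bulk term $\int_\Omega |\nabla h|^2\,\vbg$ would force $h$ to be parallel on $\R^n$, hence a constant symmetric $(0,2)$-tensor; combined with $\tr h = 0$ and $h|_{T_p\Sigma} = 0$ at each $p \in \Sigma$, this forces $h(\nu_p,\nu_p)=0$, and surjectivity of the Gauss map of the closed hypersurface $\Sigma$ onto $S^{n-1}$ followed by polarization gives $h \equiv 0$, contradicting the assumption that $h$ is nontrivial. Therefore $\M''(0) > 0$ and the theorem follows. The main obstacle is establishing the displayed formula for $-\F''[h,h]$: carrying out the integration by parts in Fermi coordinates produces several boundary contributions that must combine with the Gibbons--Hawking term so that the remaining boundary quadratic $Q(h)$ is pointwise bounded below by the contraction of $h(\nu,\cdot)\otimes h(\nu,\cdot)$ against $\Pi + \bH\bar\gamma$, which is precisely where the geometric convexity hypothesis enters.
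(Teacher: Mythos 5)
Your strategy is sound and, once unwound, is essentially the paper's own: the paper differentiates the combined quantity $\int_\Omega[R(g(t))-R(\bg)]\,\vbg-2\int_\Sigma[\bH-H(g(t))]\,\vsg$ twice at $t=0$, which is exactly the second variation of your functional $\F$ (the discrepancy between $dv_{g(t)}$ and $\vbg$ in the bulk term is harmless because $R(\bg)=0$ and $DR_{\bg}(h)=0$ pointwise for a trace-free, divergence-free $h$ on a flat background), uses criticality of $\bg$ under variations fixing the boundary metric to kill the first-order and $g''(0)$ contributions, and then invokes $R(g(t))\ge 0$ just as you do. Your scaffolding is correct: $\M(0)=\M'(0)=0$ (indeed $DH_{\bg}(h)=-\tfrac12\div_\Sigma X$ here, which integrates to zero), the reduction of $\F''$ to a quadratic form in $h$ alone because $g''(0)|_{T\Sigma}=0$, the inequality $\M''(0)\ge-\tfrac12\F''[h,h]$ (since $t\mapsto\int_\Omega R(g(t))\,dv_{g(t)}$ is nonnegative and vanishes at $t=0$, so its second derivative there is $\ge 0$), and the rigidity argument ruling out parallel $h$ via the Gauss map, which is needed for strictness and is only implicit in the paper.

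The genuine gap is exactly where you flag it: the formula $-\F''[h,h]=\int_\Omega|\bD h|^2\,\vbg+\int_\Sigma Q(h)\,\vsg$ with $Q(h)$ pointwise controlled by $\Pi+\bH\bar\gamma$ is only asserted (``should then produce''), and this identity is the entire analytic content of the theorem --- it is the only place the hypothesis $\Pi+\bH\bar\gamma\ge 0$ can enter, so without it nothing is proved. The correct statement, for $h$ with $\div_{\bg}h=0$, $\tr_{\bg}h=0$, $h|_{T\Sigma}=0$ over flat $\bg$, is $-\F''[h,h]=\tfrac12\int_\Omega|\bD h|^2\,\vbg+\int_\Sigma\big[\Pi(X,X)+\bH|X|^2\big]\,\vsg$, where $X$ is dual to $h(\bnu,\cdot)|_{T\Sigma}$; note the factor $\tfrac12$ on the Dirichlet term (immaterial for the sign argument, but a sign the computation was not carried out), and note that the boundary quadratic is precisely the contraction of $h(\bnu,\cdot)\otimes h(\bnu,\cdot)$ against $\Pi+\bH\bar\gamma$, as you predicted. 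The paper does not obtain this by raw Fermi-coordinate bookkeeping but by differentiating the integral identity of Lemma \ref{lma-DRH} (with $\l=1$ and $\Ric(\bg)=0$) and using the boundary consequences of $\div_{\bg}h=0$ and $h|_{T\Sigma}=0$, in particular \eqref{eq-bdryfact-3}, i.e. $h_{\a n;n}=-\bH h_{n\a}-\Pi(X,e_\a)$ on $\Sigma$, which is exactly what turns the boundary terms produced by the integration by parts into $\Pi(X,X)+\bH|X|^2$. Until you carry out this computation (or quote the quadratic expansion machinery of Section \ref{section-formula}), your argument is an outline of the proof rather than a proof.
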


This paper is organized as follows. In Section \ref{section-formula}, we derive a basic formula
concerning a perturbed metric (Theorem \ref{thm-main-formula-1}),  which corresponds to 
\cite[Theorem 10]{BrendleMarques} of Brendle and Marques.  In Section \ref{rigidity-V}, 
we prove Theorem \ref{thm-VRH-rigidity-intro},  which implies 
Theorem \ref{thm-intro-Vhalfsphere}.  In Section \ref{volume-Rn}, we 
give a proof  of Theorem \ref{thm-intro-E-volume}.  In Section \ref{rem-zero}, 
we consider other applications of the formulas in Section \ref{section-formula} and 
 prove Theorem \ref{thm-BM-generalization} - \ref{thm-intro-BYmass}.

\vspace{.2cm}

{\em Acknowledgment}.  The authors want to thank Simon Brendle and Fernando Marques
for pointing out a false conjecture in a previous draft.  The authors also want to thank the 
referees for useful comments that motivate Theorem 1.7 and Theorem 5.1.

\section{Basic formulas for a perturbed metric} \label{section-formula}

Let $ \Omega $ be an $n$-dimensional, smooth, compact manifold
with boundary $ \Sigma$.  Let $ \bg $ be a fixed smooth Riemannian metric on $ \Omega$.
Given a tensor $ \eta$,  let
  ``$| \eta |$" denote the length of  $\eta$ measured with respect to $ \bg$. Denote the covariant derivative with respect to $\bg$ by $\bD$. Indices of tensors are  raised by $\bg$. Let  $ \bar{R}_{ikjl}$ denote the curvature tensor of $ \bg$ such that if $ \bg $ has constant  sectional curvature $ \kappa$, then
 $  \bar{R}_{ikjl} = \kappa ( g_{ij} g_{kl} - g_{il} g_{kj} ) $.
Consider a nearby  Riemannian metric
$ g = \bg + h $
where $ h $ is a symmetric $(0,2)$ tensor with $  | h| $ very small,
say $ | h | \le \frac12$.

The following pointwise estimates of the scalar curvature of $ g $ and the mean curvature of $ \Sigma $
 were derived by Brendle and Marques in  \cite{BrendleMarques}.

 \begin{prop}[Brendle and Marques \cite{BrendleMarques}]    \label{prop-BM-R}
 The scalar curvatures $R(g) $, $ R(\bg)$ of the metrics $ g$, $ \bg$ satisfy
 \begin{equation*}
 \begin{split}
& \lf |  R(g) -  R (\bg) + \la \Ric(\bg), h \ra - \la \Ric(\bg), h^2 \ra \ri. \\
&  + \frac14 | \bD h |^2  -  \frac12 \bg^{ij} \bg^{kl} \bg^{pq} \bD_i h_{kp} \bD_l h_{jq}
+  \frac14 | \bnabla (\tr_{\bg} h) |^2  \\
 & \lf.    + \bD_i [ g^{ik} g^{jl} ( \bD_k h_{jl} - \bD_l h_{jk} ) ]
\ri | \\
\le  &  \ C \lf( | h | | \bnabla h|^2 + | h |^3 \ri)
 \end{split}
 \end{equation*}
 where  $ \Ric(\bg)$ is the Ricci curvature of $ \bg$,
 $ h^2 $ is the $\bg$-square of $ h$, i.e. $(h^2)_{ik} = \bg^{jl} h_{ij} h_{kl} $,
 $ \la \cdot, \cdot \ra$ is taken with respect to $ \bg$,
  and $ C $ is a positive constant depending only on $ n$.
  \end{prop}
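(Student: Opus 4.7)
The plan is to follow the standard second-order expansion of the scalar curvature of $g = \bg + h$ around $\bg$, tracking all terms carefully and isolating the divergence structure. First I would write $g^{ij} = \bg^{ij} - h^{ij} + (h^2)^{ij} + O(|h|^3)$ (as a Neumann series in $h$, valid since $|h| \le \tfrac12$) and use the classical identity
\begin{equation*}
T^k_{ij} := \Gamma^k_{ij}(g) - \Gamma^k_{ij}(\bg) = \tfrac{1}{2} g^{kl} \bigl( \bD_i h_{jl} + \bD_j h_{il} - \bD_l h_{ij} \bigr),
\end{equation*}
which is tensorial since it is a difference of two connections. Then the curvature difference is given by
\begin{equation*}
R_{ij}(g) - R_{ij}(\bg) = \bD_k T^k_{ij} - \bD_j T^k_{ik} + T^k_{kl} T^l_{ij} - T^k_{jl} T^l_{ik}.
\end{equation*}

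Next I would trace this with $g^{ij}$ to obtain $R(g)$. The linear-in-$h$ part of the trace, using $g^{ij} \approx \bg^{ij} - h^{ij}$, contributes the term $-\la \Ric(\bg), h\ra$ together with a divergence; the cross term $-h^{ij}(\bD_k T^k_{ij} - \bD_j T^k_{ik})$ contributes the $\la \Ric(\bg), h^2\ra$ piece (after commuting derivatives, which brings curvature) up to an admissible remainder. The quadratic-in-$T$ terms $T^k_{kl} T^l_{ij} - T^k_{jl} T^l_{ik}$, when contracted with $g^{ij}$ (and replacing $g^{ij}$ by $\bg^{ij}$ modulo a controllable error of order $|h||\bnabla h|^2$), expand out into exactly the quadratic form
\begin{equation*}
- \tfrac14 |\bD h|^2 + \tfrac12 \bg^{ij}\bg^{kl}\bg^{pq} \bD_i h_{kp}\bD_l h_{jq} - \tfrac14 |\bnabla (\tr_{\bg} h)|^2,
\end{equation*}
which matches the claimed quadratic polynomial with the opposite sign. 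The key organizational step is to keep the first-order piece packaged as the full divergence $\bD_i[g^{ik} g^{jl}(\bD_k h_{jl} - \bD_l h_{jk})]$ rather than re-expanding $g^{ik}, g^{jl}$, since doing so would force extra lower-order terms to be absorbed into the error and would destroy the exact divergence structure that is crucial for later integration by parts.

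Finally, I would bound the error. Every term I have discarded is either cubic in $h$ (no derivatives), or linear in $h$ times quadratic in $\bnabla h$, or of the form $h$ times a curvature term (which is bounded by $|h|^3$ after accounting for the size of $\Ric(\bg)$ on the compact $\bar{\Omega}$); the constant in front depends only on $n$ after normalizing by $|\bg|$. Commuting $\bD$'s in the cross term produces curvature factors that contribute $O(|h|^3)$. Combining these gives the remainder bound $C(|h||\bnabla h|^2 + |h|^3)$. The main obstacle is purely the bookkeeping: one must simultaneously (i) expand $g^{ij}$ to second order in the divergence-free quadratic terms while keeping only first order in the divergence term, (ii) correctly pair up the six quadratic-in-$\bnabla h$ contractions so that only the three displayed combinations survive, and (iii) verify that all error terms have no uncontrolled $|\bnabla h|^2$ factor (i.e.\ every $|\bnabla h|^2$ comes with a factor of $|h|$). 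Once the algebra is organized this way, the estimate follows by the triangle inequality on $\bar{\Omega}$.
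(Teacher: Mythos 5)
Your overall route is the standard one (the paper itself states this proposition without proof, citing Brendle--Marques, whose derivation is of exactly this type): form the difference tensor $T^k_{ij}$, trace the Ricci-difference formula with $g^{ij}$, keep the divergence packaged with the full inverse metrics $g^{ik}g^{jl}$, and estimate the remainder. However, two of your concrete intermediate claims are wrong, and as written the bookkeeping does not close. First, the $g^{ij}$-trace of $T^k_{kl}T^l_{ij}-T^k_{jl}T^l_{ik}$ does \emph{not} ``expand out into exactly'' the displayed quadratic form: at leading order it equals
\begin{equation*}
\tfrac14 |\bD h|^2 - \tfrac12\, \bg^{ij}\bg^{kl}\bg^{pq}\bD_i h_{kp}\bD_l h_{jq} - \tfrac14 |\bD(\tr_{\bg}h)|^2 + \tfrac12 \la \div_{\bg}h,\, d(\tr_{\bg}h)\ra ,
\end{equation*}
i.e.\ the first two coefficients carry the \emph{opposite} signs from what you claim, and there is an extra divergence--trace cross term. (Check $h=f\bg$ on flat $\R^n$: the trace of the $T\cdot T$ terms is $(\tfrac{3n}{4}-\tfrac{n^2}{4}-\tfrac12)|df|^2$, not $(-\tfrac{n}{4}+\tfrac12-\tfrac{n^2}{4})|df|^2$.) The displayed combination only emerges after you add the terms $(\bD_k g^{ij})T^k_{ij}-(\bD_j g^{ij})T^k_{ik}$ that are generated when $g^{ij}$ is pulled inside $\bD$ to form $\bD_i[g^{ik}g^{jl}(\bD_k h_{jl}-\bD_l h_{jk})]$; these contribute, to leading order, $\bD_k h^{ij}\bD_i h_j{}^k - \tfrac12|\bD h|^2 - \tfrac12\la \div_{\bg}h, d(\tr_{\bg}h)\ra$, which cancels the cross term and flips the two signs. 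Your plan never mentions these derivative-of-the-inverse-metric terms, so step (ii) of your ``bookkeeping'' would not pair up correctly.

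Second, the $\la \Ric(\bg),h^2\ra$ term does not come from a cross term $-h^{ij}(\bD_kT^k_{ij}-\bD_jT^k_{ik})$ ``after commuting derivatives.'' If you keep the divergence with $g$-inverses, as you correctly insist, there is no such cross term at all; and if you create it by expanding $g^{ij}\approx \bg^{ij}-h^{ij}$ in front of the $\bD T$ terms, it is of the type $h\cdot\bD^2 h$, which is \emph{not} bounded by $C(|h||\bD h|^2+|h|^3)$ pointwise and cannot be converted into $\la\Ric(\bg),h^2\ra$ by commuting derivatives: the statement is pointwise, so no integration by parts is available, and commutation merely reorders derivatives (producing curvature$\times h\times h$ corrections) while the $h\cdot\bD^2h$ part survives. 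The actual source of $\la\Ric(\bg),h^2\ra$ is simply the second-order term of the Neumann series, $g^{ij}\bar R_{ij}=R(\bg)-\la\Ric(\bg),h\ra+\la\Ric(\bg),h^2\ra+O(|h|^3)$. With these two corrections your expansion does reproduce the proposition (replacing $g$-inverses by $\bg$-inverses in all quadratic terms costs only $C|h||\bD h|^2$); note also, as you half-acknowledge, that the cubic error constant involves a curvature bound for $\bg$, which is a constant depending only on $n$ for the model metrics considered in the cited reference.
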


\begin{rem}\label{rem-BM-R}
If the background metric $ \bg$ is Ricci flat, i.e $ \bar{R}_{ik}=0$,
then there will be no $|h|^3$ term in the above estimate. That is because
$$
R(g) =g^{ik} \bar{R}_{ik}  -g^{ik}g^{lj}\lf(\bD_{i,k}h_{jl}-\bD_{i,l}h_{jk}\ri)
+g^{ik}g^{jl}g_{pq}
\lf(\Gamma_{il}^q\Gamma_{jk}^p-\Gamma_{jl}^q\Gamma_{ik}^p\ri),
$$
where each term on the right, except $ g^{ik} \bar{R}_{ik}$, involves derivatives of $h$.
\end{rem}

 \begin{prop}[Brendle and Marques \cite{BrendleMarques}]       \label{prop-BM-H}
 Assume that $ g $ and $ \bg$ induce the same metric on $ \Sigma$, i.e.
 $ h|_{T\Sigma} = 0$ where $ T\Sigma $ is the tangent bundle of $ \Sigma$.
 Then the mean curvatures $H(g)$, $H(\bg)$ of $ \Sigma $
 in $(\Omega, g)$, $(\Omega, \bg)$, each with respect to the outward normals,
 satisfy
 \begin{equation*}
 \begin{split}
& \lf|  2 \lf[ H (g) -  H (\bg) \ri]  -
 \lf(  h(\bnu, \bnu) - \frac14 h(\bnu, \bnu)^2+ \sum_{\alpha=1}^{n-1} h( e_\alpha, \bnu)^2 \ri)
 H(\bg) \ri. \\
 & \lf. + \lf( 1 - \frac12 h(\bnu, \bnu) \ri) \sum_{\alpha=1}^{n-1}  \lf[ 2 \bD_{e_\alpha} h (e_\alpha, \bnu) - \bD_{\bnu} h(e_\alpha, e_\alpha)  \ri]  \ri| \\
 \le & \ C \lf( | h |^2 | \bnabla h | + | h|^3 \ri)
 \end{split}
 \end{equation*}
 where $\{ e_\alpha  \ | \ 1 \le \alpha \le n-1 \}$ is a local orthonormal frame on $ \Sigma$,
 $ \bnu $ is the $\bg$-unit outward normal vector to $ \Sigma$,
 and $ C $ is a positive constant  depending only on $ n$.
\end{prop}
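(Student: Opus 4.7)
The plan is a direct Taylor expansion in $h$ carried out in Fermi coordinates $(y^\alpha, r)$ adapted to $(\Omega, \bg)$, so that $\bg = dr^2 + \bar{g}_{\alpha\beta}(y,r) dy^\alpha dy^\beta$ and $\bnu = \p_r$ is the outward unit normal to $\Sigma = \{r = 0\}$. In these coordinates $\bar{g}_{\alpha n} \equiv 0$, so $g_{\alpha n} = h_{\alpha n}$ everywhere, and the hypothesis $h|_{T\Sigma} = 0$ gives $h_{\alpha\beta}|_\Sigma = 0$ and $\p_\gamma h_{\alpha\beta}|_\Sigma = 0$ for tangential indices. The only components of $\bnabla h$ that contribute at $\Sigma$ are $\p_r h_{\alpha\beta}$ and the tangential and normal derivatives of $h_{\alpha n}$ and $h_{nn}$.

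First I would determine the $g$-unit outward normal at $\Sigma$. Since $g|_{T\Sigma} = \bg|_{T\Sigma}$, write $\nu^g = A(\bnu + Y)$ with $Y \in T\Sigma$. The orthogonality $g(\nu^g, X) = 0$ for $X \in T\Sigma$ forces $Y^\alpha = -\bg^{\alpha\beta} h(\bnu, e_\beta)$, and $g(\nu^g, \nu^g) = 1$ yields
\[
A = \bigl[1 + h(\bnu,\bnu) - \textstyle\sum_\alpha h(\bnu, e_\alpha)^2\bigr]^{-1/2} = 1 - \tfrac{1}{2} h(\bnu,\bnu) + \tfrac{1}{2} \textstyle\sum_\alpha h(\bnu,e_\alpha)^2 + \tfrac{3}{8} h(\bnu,\bnu)^2 + O(|h|^3).
\]

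Next I would derive a closed-form expression for $H(g)$ at $\Sigma$. Starting from $\mathrm{II}^g_{\alpha\beta} = -g(\nu^g, \nabla^g_{\p_\alpha} \p_\beta) = -\Gamma^{g,c}_{\alpha\beta}\, g(\nu^g, \p_c)$ and exploiting that $g(\nu^g, \p_\gamma) = 0$ for tangential $\gamma$ (while $g(\nu^g, \p_n) = A^{-1}$, using $g^{nn}|_\Sigma = A^2$), one gets the clean relation $\mathrm{II}^g_{\alpha\beta} = -A^{-1}\Gamma^{g,n}_{\alpha\beta}$. Expanding $\Gamma^{g,n}_{\alpha\beta}$ via the standard Christoffel formula (with $g^{nn}|_\Sigma = A^2$, $g^{n\gamma}|_\Sigma = -A^2 h^\gamma_n$), tracing with $\bar{g}^{\alpha\beta}$, and then converting $\p_\alpha h_{\beta n}$ into a $\bg$-covariant derivative (whose correction $\bar{\Gamma}^{\bg,n}_{\alpha\beta} h_{nn} = -\Pi_{\alpha\beta} h_{nn}$ contributes the $A\bH\, h(\bnu, \bnu)$ term, and whose tangential correction $\bar{\Gamma}^{\bg,\delta}_{\alpha\beta} h_{\delta n}$ cancels precisely against the $h^\delta_n \bar{\Gamma}^{\bg}_{\delta,\alpha\beta}$ piece coming from $g^{n\gamma}$) produces the identity, exact at $\Sigma$,
\[
H(g) = A\,\bH\,(1 + h(\bnu,\bnu)) - \tfrac{A}{2} \textstyle\sum_\alpha \bigl[ 2\bD_{e_\alpha} h(e_\alpha, \bnu) - \bD_{\bnu} h(e_\alpha, e_\alpha) \bigr].
\]

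Finally I would substitute the Taylor expansion of $A$. The leading factor gives $2[A(1+h(\bnu,\bnu)) - 1]\bH = \bH[h(\bnu,\bnu) - \tfrac{1}{4}h(\bnu,\bnu)^2 + \sum_\alpha h(\bnu,e_\alpha)^2] + O(|h|^3)$, and replacing the $A$ in front of the derivative sum by $1 - \tfrac{1}{2}h(\bnu,\bnu)$ introduces only an $O(|h|^2|\bnabla h|)$ error absorbed in the remainder. Doubling and rearranging produces the claimed inequality. The main obstacle is the bookkeeping in the third step: tracking Christoffel corrections from three distinct sources (rewriting partial derivatives as $\bnabla$-derivatives, the full $\Gamma^g$ formula, and the contribution of $Y$ via $g^{n\gamma}$) and verifying the precise cancellations at $\Sigma$. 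The Fermi-coordinate normalization is essential; in any other coordinate system, extra terms arising from $\bar{g}_{\alpha n} \neq 0$ would obscure the structure.
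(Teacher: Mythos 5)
Your argument is correct: in Fermi coordinates the identity $H(g)=A\,\bH\,(1+h(\bnu,\bnu))-\tfrac{A}{2}\sum_{\alpha}\lf[2\bD_{e_\alpha}h(e_\alpha,\bnu)-\bD_{\bnu}h(e_\alpha,e_\alpha)\ri]$ with $A=\lf(1+h(\bnu,\bnu)-\sum_\alpha h(e_\alpha,\bnu)^2\ri)^{-1/2}$ does hold exactly on $\Sigma$ (the two Christoffel corrections cancel precisely as you claim, and the $\bar{\Gamma}^{n}_{\alpha\beta}h_{nn}=-\Pi_{\alpha\beta}h_{nn}$ term produces the $A\bH\,h(\bnu,\bnu)$ contribution), and Taylor-expanding $A$ then yields the stated estimate, consistent with the linearization \eqref{eq-linearH-BM}. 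The paper itself does not prove this proposition but quotes it from Brendle--Marques, whose proof is the same direct second-order expansion of the $g$-unit normal and of the second fundamental form; your packaging via an exact boundary identity followed by expansion of the single algebraic factor $A$ is simply a cleaner bookkeeping of that computation. One minor caveat: the $|h|^3$ part of your remainder comes multiplied by $\bH$, so the constant you actually obtain depends on $\sup_\Sigma|\bH|$ in addition to $n$; this is inherent in the statement as phrased and is immaterial for how the proposition is used in this paper, where constants are allowed to depend on $(\Omega,\bg)$.
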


To derive the main formula \eqref{eq-main-formula-1} in this section,  we let
\be\label{linear-R}
DR_{\bg} ( h ) = - \Delta_{\bg} ( \tr_{\bg} h ) + \div_{\bg} \div_{\bg} h - \la \Ric(\bg), h \ra
\ee
be the linearization of the scalar curvature at $ \bg$ along  $ h$. Here
 ``$ \Delta_{\bg}$, $ \div_{\bg} $" denote the Laplacian, the divergence
with respect to $ \bg$.

\begin{lma}\label{lma-R-R} With the same notations  in Proposition \ref{prop-BM-R}, assume in addition $ \div_{\bg} h =0$, then
\begin{equation*}
 \begin{split}
 R(g) - R (\bg) & = DR_{\bg}(h)-\frac12DR_{\bg}(h^2)+\la h,\bD^2\th\ra - \frac14 \lf(| \bD h |^2 +  | \bD (\tr_{\bg} h) |^2\ri)\\&+\frac12h^{ij}h^{kl}\overline R_{ikjl} +E(h) +\bD_i(E_1^i(h))
 \end{split}
 \end{equation*}
 where $ E(h) $ is a function and $ E_1(h)$ is a vector field on $ \Omega$
 satisfying
 $$ | E (h) | \le C ( | h| | \bD h |^2 + | h|^3 ), \ \
  | E_1 (h) | \le C  |h|^2 |\bD h|   $$
for a  positive constant $ C$ depending only on $ n$.
\end{lma}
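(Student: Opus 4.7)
The plan is to start from Proposition~\ref{prop-BM-R}, solve it for $R(g)-R(\bg)$, and reorganise the explicit pointwise terms into the target form using the hypothesis $\div_{\bg} h = 0$. Since $\div_{\bg} h = 0$, the definition \eqref{linear-R} collapses to $DR_{\bg}(h) = -\Delta_{\bg}(\tr_{\bg} h) - \la \Ric(\bg), h\ra$, so the $-\la \Ric(\bg), h\ra$ term on the right-hand side of the rearranged Proposition~\ref{prop-BM-R} becomes $DR_{\bg}(h) + \Delta_{\bg}(\tr_{\bg} h)$, producing $DR_{\bg}(h)$ at the cost of an extra Laplacian of the trace. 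The Brendle--Marques pointwise error of size $|h||\bD h|^2 + |h|^3$ is absorbed directly into $E(h)$.

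Next I would expand $g^{ik}g^{jl} = \bg^{ik}\bg^{jl} - \bg^{ik}h^{jl} - h^{ik}\bg^{jl} + Q^{ikjl}(h)$, with $|Q(h)| \le C|h|^2$, inside the divergence $\bD_i[g^{ik}g^{jl}(\bD_k h_{jl}-\bD_l h_{jk})]$. The leading piece collapses to $\Delta_{\bg}(\tr_{\bg}h)$, since $\bg^{jl}\bD_l h_{jk} = (\div_{\bg} h)_k = 0$; this cancels the extra Laplacian of the trace from the first paragraph. The linear-in-$h$ piece, after a further use of $\div_{\bg} h = 0$, splits exactly into the term $\la h, \bD^2(\tr_{\bg} h)\ra$ plus $\tfrac12 \Delta_{\bg}|h|^2$ plus a divergence $\bD^k(h^{jl}\bD_l h_{jk})$ of type $h\cdot\bD h$. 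The quadratic remainder $\bD_i[Q^{ikjl}(h)(\bD_k h_{jl}-\bD_l h_{jk})]$ is already a divergence of an object of size $|h|^2|\bD h|$, and so contributes to $E_1^i(h)$. To treat the surviving divergence $\bD^k(h^{jl}\bD_l h_{jk})$ jointly with the contraction $\tfrac12\bg^{ij}\bg^{kl}\bg^{pq}\bD_i h_{kp}\bD_l h_{jq}$ from Proposition~\ref{prop-BM-R}, I would invoke the Ricci identity: under $\div_{\bg} h = 0$, the commutator $[\bD^k,\bD_l]h_{jk}$ contracted with $h^{jl}$ becomes a linear combination of $\la\Ric(\bg), h^2\ra$ and the full curvature contraction $h^{ij}h^{kl}\bar R_{ikjl}$, while the remaining $\bD h\cdot\bD h$ quantity combines with the divergence to reproduce $\div_{\bg}\div_{\bg}(h^2)$ modulo curvature-times-$h^2$ corrections. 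Using $-\tfrac12 DR_{\bg}(h^2) = \tfrac12\Delta_{\bg}|h|^2 - \tfrac12\div_{\bg}\div_{\bg}(h^2) + \tfrac12\la\Ric(\bg), h^2\ra$, the second-order-in-$h$ pieces then reassemble as $-\tfrac12 DR_{\bg}(h^2) + \tfrac12 h^{ij}h^{kl}\bar R_{ikjl}$, modulo terms absorbable into $\bD_i E_1^i(h)$.

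The main obstacle is the algebraic bookkeeping in this last step: the two distinct curvature contractions $\la\Ric(\bg), h^2\ra$ and $h^{ij}h^{kl}\bar R_{ikjl}$ must arise with exactly the coefficients demanded by the lemma, which requires careful use of the Riemann symmetries $\bar R_{ikjl} = \bar R_{jlik} = -\bar R_{kijl}$, the first Bianchi identity, and the symmetry of $h$. The remaining checks --- that $-\tfrac14|\bD h|^2$ and $-\tfrac14|\bD(\tr_{\bg} h)|^2$ from Proposition~\ref{prop-BM-R} pass through unchanged, and that all discarded quantities are pointwise bounded by $|h||\bD h|^2 + |h|^3$ or are divergences of objects of size $|h|^2|\bD h|$ --- are routine once the curvature identities are in place.
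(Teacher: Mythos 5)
Your proposal is correct and follows essentially the same route as the paper: rearrange the Brendle--Marques pointwise estimate, expand $g^{ik}g^{jl}$ to first order in $h$ (absorbing the quadratic remainder of the divergence term into $E_1$), use $\div_{\bg}h=0$ to identify the linear piece as $\tfrac12\Delta_{\bg}|h|^2+\la h,\bD^2\th\ra-\div_{\bg}\div_{\bg}(h^2)$, and apply the Ricci identity to trade the cross contraction $\tfrac12\bD_i h_{kp}\bD^k h^{ip}$ for $\tfrac12\div_{\bg}\div_{\bg}(h^2)-\tfrac12\la\Ric(\bg),h^2\ra+\tfrac12 h^{ij}h^{kl}\bar R_{ikjl}$, reassembling $-\tfrac12 DR_{\bg}(h^2)$. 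The only (immaterial) difference is that the paper recognizes $DR_{\bg}(h)$ exactly from $-\bD_i[\bg^{ik}\bg^{jl}(\bD_k h_{jl}-\bD_l h_{jk})]-\la\Ric(\bg),h\ra$ without invoking $\div_{\bg}h=0$, whereas you split off $\Delta_{\bg}\th$ and cancel it; just keep careful track of the sign of the intermediate divergence $\bD^k(h^{jl}\bD_l h_{jk})$ when writing it up.
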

\begin{proof} First note that
\be\label{linear term}
-\bD_i\lf[\bg^{ik}\bg^{jl}\lf(\bD_k h_{jl}-\bD_lh_{jk}\ri)\ri] -\la \Ric(\bg),h\ra =DR_{\bg}(h).
\ee
Suppose $ g^{ik} = \bg^{ik} + \tau^{ik}$. Then
 $
  \tau^{ik}   = - h^{ik}   + E_2^{ik} (h)
$
where $ h^{ik} = \bg^{ij} h_{jl}  \bg^{lk}$ and  $ | E_2 (h) | \le C | h|^2$.
Hence,
$$
 g^{ik} g^{jl}  - \bg^{ik} \bg^{jl}
 =   - \bg^{ik} h^{jl}   - \bg^{jl} h^{ik}  + E_3^{ikjl}(h)
 $$
 where $ | E_3(h) | \le C | h|^2$.  Therefore,
 \be\label{h^2 term}
 \begin{split}
 -\bD_i &[ ( g^{ik} g^{jl}  - \bg^{ik} \bg^{jl} ) ( \bD_k h_{jl} - \bD_l h_{jk} )]\\
 =&
\bD_i [ (  \bg^{ik} h^{jl}  + \bg^{jl} h^{ik}  -  E_3^{ikjl}(h)) ( \bD_k h_{jl} - \bD_l h_{jk} )]\\
 =& \frac12\Delta_{\bg}|h|^2+\la h,\nabla^2\tr_{\bg}(h)\ra_{\bg}- \div_{\bg}\div_{\bg}(h^2)- \bD_i\lf( E_3^{ikjl}( \bD_k h_{jl} - \bD_l h_{jk} )\ri).
 \end{split}
 \ee
Applying the Ricci identity, one has
 \be
 \begin{split}\label{R-R1}
 \frac12\bg^{ij}\bg^{kl}\bg^{pq}\bD_ih_{kp}\bD_lh_{jq}
 =&\frac12\div_{\bg}\div_{\bg}(h^2)-\frac12\la \Ric(\bg),h^2\ra+\frac12 h^{ij}h^{kl}\overline R_{ikjl}.
 \end{split}
 \ee
The lemma follows from Proposition \ref{prop-BM-R}, \eqref{linear term}, \eqref{h^2 term} and \eqref{R-R1}.
\end{proof}

 Next, let $ DH_{\bg} (h)$ denote the linearization of the
 mean curvature at $ \bg $ along $ h$.
 Proposition \ref{prop-BM-H} implies
 \be \label{eq-linearH-BM}
 DH_{\bg} (h ) = \frac12 \lf[ h(\bnu, \bnu) H(\bg) - \sum_{\alpha=1}^{n-1}
 \lf( 2 \bD_{e_\alpha}  h( e_\alpha, \bnu) - \bD_{\bnu} h (e_\alpha, e_\alpha) \ri) \ri] .
 \ee
For later use, we  note the following equivalent expression of $ DH_{\bg} (h)$
 (see \cite[(34)]{MiaoTam2009} for instance)
\be\label{eq-DH-MT}
 DH_{\bg} (h ) = \frac12 \lf\{ [ d (\tr_{\bg} h) - \div_{\bg} h](\bnu) - \div_\Sigma X \ri\},
 \ee
where $X$ is the vector field on $\S$ dual to the $1$-form $h(\bnu,\cdot) |_{T \Sigma} $.

Let $ DR_{\bg}^* ( \cdot )$ denote the formal $L^2$ $\bg$-adjoint of $ DR_{\bg}(\cdot)$,
i.e.
\be \label{eq-drstar}
DR_{\bg}^* ( \l) = - ( \Delta_{\bg} \l ) \bg + \nabla^2_{\bg} \l - \l \Ric(\bg)
\ee
where  $ \l$ is a function and
 $ \nabla_{\bg}^2 \l $ denotes the Hessian of $ \l $ with
respect to $ \bg$. The content of the following lemma had been used in \cite{MiaoTam2009}.

\begin{lma} \label{lma-DRH}
Let $p$ be any smooth $(0,2)$ symmetric tensor on $\Omega$, then
\be \label{eq-intDR}
\begin{split}
\int_\Omega DR_{\bg} (p) \l \ \vbg = &  \int_\Omega \la DR_{\bg}^* ( \l ), p \ra \ \vbg
- \int_\Sigma 2 DH_{\bg} (p)  \l \ \vsg\\
&+\int_\S \l_{\bnu}\lf(\tr_{\bg}(p)-p(\bnu,\bnu)\ri)\ \vsg
\end{split}
\ee
where $ \l_{\bnu} = \p_{\bnu} \l  $ denotes the directional derivative of $ \l $ along $ \bnu$.
\end{lma}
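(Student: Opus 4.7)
The plan is a direct integration by parts, applied term by term to the three pieces of $DR_{\bg}(p) = -\Delta_{\bg}(\tr_{\bg} p) + \div_{\bg}\div_{\bg} p - \la \Ric(\bg), p\ra$. I will integrate (1) and (2) twice each, keeping careful track of boundary terms on $\Sigma$, so that the resulting volume integrals combine with the (unchanged) contribution from $-\la \Ric(\bg), p\ra \l$ into $\int_\Omega \la DR_{\bg}^*(\l), p\ra \, \vbg$, by virtue of the explicit formula \eqref{eq-drstar}.

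For the first term, two applications of the divergence theorem give
$$
\int_\Omega -\Delta_{\bg}(\tr_{\bg} p) \l \, \vbg  = -\int_\Sigma \l\, (\tr_{\bg} p)_{\bnu} \, \vsg + \int_\Sigma (\tr_{\bg} p)\, \l_{\bnu} \, \vsg - \int_\Omega (\tr_{\bg} p)(\Delta_{\bg} \l)\, \vbg .
$$
For the second term, writing $\omega = \div_{\bg} p$ and noting that $\bnabla_i(p^{ij}\l_j) = \omega^j \l_j + \la p, \nabla^2_{\bg}\l \ra$, I obtain
$$
\int_\Omega \div_{\bg}\div_{\bg} p \cdot \l \, \vbg = \int_\Sigma \l\, \omega(\bnu) \, \vsg - \int_\Sigma p(\nabla \l, \bnu)\, \vsg + \int_\Omega \la p, \nabla^2_{\bg} \l\ra \, \vbg .
$$
Collecting all volume contributions reproduces $\int_\Omega \la DR_{\bg}^*(\l), p\ra\, \vbg$, so the proof reduces to reorganizing the boundary integrals.

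The key manipulation is at the boundary. Decomposing $\nabla \l = \l_{\bnu}\bnu + \nabla^\Sigma \l$ gives $p(\nabla \l, \bnu) = \l_{\bnu} p(\bnu,\bnu) + p(\nabla^\Sigma \l, \bnu)$, so the boundary terms combine as
$$
\int_\Sigma \l \left[ (\div_{\bg} p)(\bnu) - (\tr_{\bg} p)_{\bnu} \right] \vsg + \int_\Sigma \l_{\bnu}\lf(\tr_{\bg} p - p(\bnu,\bnu)\ri) \vsg - \int_\Sigma p(\nabla^\Sigma \l, \bnu) \, \vsg .
$$
Applying formula \eqref{eq-DH-MT} in the form $2DH_{\bg}(p) = [d(\tr_{\bg} p) - \div_{\bg} p](\bnu) - \div_\Sigma X$ converts the first bracket into $-2DH_{\bg}(p) - \div_\Sigma X$. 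The $\l_{\bnu}$ term is already the one desired. For the leftover pieces, I integrate by parts on the closed hypersurface $\Sigma$:
$$
-\int_\Sigma \l\, \div_\Sigma X \, \vsg = \int_\Sigma \la \nabla^\Sigma \l, X \ra_{\bar\gamma} \, \vsg = \int_\Sigma p(\nabla^\Sigma \l, \bnu)\, \vsg ,
$$
since by definition $X$ is dual to $p(\bnu, \cdot)|_{T\Sigma}$. This exactly cancels the remaining $-\int_\Sigma p(\nabla^\Sigma \l, \bnu)\, \vsg$, leaving precisely the claimed identity \eqref{eq-intDR}.

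The computation is essentially routine; the only potential pitfall is the sign and normalization bookkeeping in the boundary terms, in particular verifying that the tangential divergence term from $DH_{\bg}(p)$ precisely matches, after integration by parts on $\Sigma$, the tangential piece of $p(\nabla \l, \bnu)$ arising from the divergence theorem on $\Omega$. Once this matching is observed, everything falls into place without needing any hypothesis beyond smoothness of $p$ and $\l$.
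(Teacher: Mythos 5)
Your proposal is correct, and it is essentially the paper's own argument: integrate $DR_{\bg}(p)\l$ by parts over $\Omega$ to produce $\int_\Omega \la DR^*_{\bg}(\l),p\ra\,\vbg$ plus the boundary integrand $-\l\,\p_{\bnu}(\tr_{\bg}p)+(\tr_{\bg}p)\l_{\bnu}+\l\,\div_{\bg}p(\bnu)-p(\bnu,\bnabla\l)$, split $p(\bnu,\bnabla\l)$ into normal and tangential parts, and match the tangential piece with the $\div_\Sigma X$ term in \eqref{eq-DH-MT} via an integration by parts on the closed hypersurface $\Sigma$. The only cosmetic difference is the order of that last step (the paper converts $\la X,\bnabla^\Sigma\l\ra$ into $\l\,\div_\Sigma X$ before invoking \eqref{eq-DH-MT}, while you expand $2DH_{\bg}(p)$ first and cancel), which is the same computation.
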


\begin{proof} Let $Y$ be the vector field on $\S$ dual to the $1$-form $p(\bnu,\cdot)|_{T \Sigma}$.
Integrating by parts, one has
\begin{equation} \label{eq-pf-of-DRRstar}
\begin{split}
& \ \int_\Omega  DR_{\bg} (p) \l \ \vbg - \  \int_\Omega \la DR_{\bg}^* ( \l ), p \ra \ \vbg \\
= & \ \int_\Sigma - \l \p_{\bnu} (\tr_{\bg} p) + (\tr_{\bg} p) \p_{\bnu} \l
+ \l \div_{\bg} p (\bnu)  - p(\bnu, \bnabla \l) \ \vsg \\
= & \ \ \int_\Sigma  \l [ - \p_{\bnu} (\tr_{\bg} p) +  \div_{\bg} p (\bnu) ] -
\la Y, \bnabla^\Sigma \l \ra \ \vsg
+ \int_\S \l_{\bnu}\lf(\tr_{\bg}(p)-p(\bnu,\bnu)\ri)  \ \vsg \\
= & \ \ \int_\Sigma  \l [ - \p_{\bnu} (\tr_{\bg} p) +  \div_{\bg} p (\bnu) + \div_\Sigma Y ]  \ \vsg
+ \int_\S \l_{\bnu}\lf(\tr_{\bg}(p)-p(\bnu,\bnu)\ri) \ \vsg
\end{split}
\end{equation}
where $ \bnabla^\Sigma (\cdot)$ denotes the gradient on $ \Sigma$ with respect to the induced metric. From this and \eqref{eq-DH-MT} the Lemma follows.
\end{proof}

Using Lemma \ref{lma-DRH}, we can  estimate $ \int_\Omega [ R(g) - R (\bg) ] \l \ \vbg $.

\begin{prop} \label{prop-estimate-RbR}
Suppose $ g $ and $ \bg $ induce the same metric on $ \Sigma$
and  $ h $ satisfies
$ \div_{\bg} h = 0 .$
Given any $C^2$ function $ \l $ on $ \Omega$, one has
\begin{equation*}
 \begin{split}
& \   \int_\Omega \lf[R(g) - R(\bg) \ri] \l \  \vbg  \\
  = & \ \int_\Omega \la h, DR^*_{\bg} ( \l) \ra \ \vbg
    -  \frac12 \int_\Omega \la  h^2,  DR^*_{\bg} (\l)  \ra \ \vbg  \\
& \
+  \int_\Omega \lf[ ( \tr_{\bg} h ) \la h, \nabla^2_{\bg} \l \ra   + \frac12   h^{ij} h^{kl} \bar{R}_{ikjl} \l  - \frac14 ( | \bD h |^2 + | \bnabla ( \tr_{\bg} h ) |^2 ) \l  \ri] \ \vbg \\
& \ + \int_\Sigma \lf[ - (h_{nn})^2 - \frac12 | X |^2 \ri] \l_{;n} \ \vsg
 - \int_\Sigma h_{nn} \la X, \bnabla^\Sigma \l \ra  \ \vsg \\
& + \int_\Sigma \lf[
- \frac12  (h_{nn})^2  H(\bg)
  - \frac12 \Pi (X, X) - \frac32  | X|^2 H(\bg)        \ri] \l  \ \vsg
   -   \int_\Sigma  (2 - 2 \tr_{\bg} h )  DH_{\bg}(h)  \l \ \vsg \\
& \  + \int_\Omega E(h) \l  \ \vbg - \int_\Omega E^i_1 (h)  \bD_i \l \ \vbg
+ \int_\Sigma F_1(h) \l \ \vsg
 \end{split}
 \end{equation*}
 where $ \Pi $ is  the second fundamental form of $ \Sigma$ in $(\Omega, \bg)$
with respect to $ \bnu$,  $X$
is the vector field on $\S$ that is dual to the $1$-form $ h (e_n, \cdot)|_{T\Sigma}$,
$E(h)$ and $E_1^i(h)$ are as in Lemma \ref{lma-R-R},  and
$ F_1(h) $ is a  function on $ \Sigma $ satisfying
$$
| F_1(h) | \le C  | h|^2 | \bD h|
$$
for a positive constant $ C $ depending only on $n$.
\end{prop}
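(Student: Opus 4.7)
\emph{Proof plan.} The strategy is to take the pointwise identity of Lemma~\ref{lma-R-R}, multiply by $\lambda\,\vbg$, integrate over $\Omega$, and convert each resulting term to the form stated in the Proposition by successive integrations by parts, using crucially the assumptions $\div_{\bg}h=0$ and $h|_{T\Sigma}=0$. The quadratic gradient and curvature pieces $-\tfrac14(|\bD h|^2+|\bnabla\th|^2)\lambda$ and $\tfrac12 h^{ij}h^{kl}\bar R_{ikjl}\lambda$ carry over verbatim. The divergence error $\int_\Omega\bD_i(E_1^i(h))\lambda\,\vbg$ integrates by parts to produce the interior remainder $-\int_\Omega E_1^i(h)\bD_i\lambda\,\vbg$ together with a boundary contribution $\int_\Sigma E_1^i(h)\bnu_i\lambda\,\vsg$, which is pointwise bounded by $C|h|^2|\bD h|$ and so is absorbed into $F_1(h)$.

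I would then apply Lemma~\ref{lma-DRH} to the two linearization integrals with $p=h$ and with $p=h^2$. This converts them into the desired $\int_\Omega\la h,DR^*_{\bg}(\lambda)\ra$ and $-\tfrac12\int_\Omega\la h^2,DR^*_{\bg}(\lambda)\ra$, together with boundary integrals carrying $DH_{\bg}$ and $\lambda_{\bnu}$. The $\lambda_{\bnu}$ integral coming from $p=h$ vanishes, because $h|_{T\Sigma}=0$ forces $\tr_{\bg}h-h(\bnu,\bnu)=0$ on $\Sigma$; a direct computation in an orthonormal frame $\{e_\alpha,\bnu\}$ adapted to $\Sigma$ gives instead $\tr_{\bg}(h^2)-h^2(\bnu,\bnu)=|X|^2$, yielding the $-\tfrac12|X|^2\lambda_{;n}$ contribution of the Proposition. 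The $p=h$ mean-curvature piece produces the summand $-2\,DH_{\bg}(h)\lambda$ directly, whereas the $p=h^2$ piece $\int_\Sigma DH_{\bg}(h^2)\lambda\,\vsg$ is the geometric heart of the calculation, postponed below.

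For the cross term $\int_\Omega\la h,\bD^2\th\ra\lambda\,\vbg$, the condition $\div_{\bg}h=0$ lets me rewrite the integrand as $\bD_i(h^{ij}\bD_j\th)$, and two successive integrations by parts then transfer both derivatives onto $\lambda$. This produces the interior piece $\int_\Omega\th\la h,\bD^2\lambda\ra\,\vbg$ together with the boundary pair $\int_\Sigma h(\bnu,\bnabla\th)\lambda\,\vsg-\int_\Sigma\th\,h(\bnu,\bnabla\lambda)\,\vsg$. Decomposing both gradients into normal plus tangential components along $\Sigma$ and using $\th|_\Sigma=h_{nn}$ extracts the boundary contributions weighted by $\lambda_{;n}$ and by $\la X,\bnabla^\Sigma\lambda\ra$; the remaining piece $h_{nn}\bD_{\bnu}(\tr_{\bg}h)$ is handled via \eqref{eq-DH-MT}, which under $\div_{\bg}h=0$ reads $\bD_{\bnu}(\tr_{\bg}h)=2DH_{\bg}(h)+\div_\Sigma X$. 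A Stokes integration on the closed hypersurface $\Sigma$ then absorbs the $\div_\Sigma X$ contribution, and the $DH_{\bg}(h)$ part combines with the $-2\,DH_{\bg}(h)\lambda$ coming from the linearization to produce the full coefficient $-(2-2\tr_{\bg}h)\,DH_{\bg}(h)$.

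The substantive work lies in extracting the three geometric coefficients $-\tfrac12(h_{nn})^2H(\bg)$, $-\tfrac12\Pi(X,X)$, and $-\tfrac32|X|^2 H(\bg)$ from $\int_\Sigma DH_{\bg}(h^2)\lambda\,\vsg$. I would apply \eqref{eq-DH-MT} with $p=h^2$, noting that the dual vector of $h^2(\bnu,\cdot)|_{T\Sigma}$ is $h_{nn}X$ on $\Sigma$ since $h|_{T\Sigma}=0$, and expand $d(\tr h^2)(\bnu)$, $(\div_{\bg}h^2)(\bnu)$, and $\div_\Sigma(h_{nn}X)$ in the adapted frame. The Gauss identity $\bD_{e_\alpha}e_\beta=\nabla^\Sigma_{e_\alpha}e_\beta-\Pi(e_\alpha,e_\beta)\bnu$, the Weingarten identity $\bD_{e_\alpha}\bnu=\sum_\beta\Pi(e_\alpha,e_\beta)e_\beta$, and the consequence $(\bD_{e_\gamma}h)(e_\alpha,e_\beta)=\Pi(e_\gamma,e_\alpha)X_\beta+\Pi(e_\gamma,e_\beta)X_\alpha$ of $h|_{T\Sigma}=0$ then produce precisely the required $H(\bg)$- and $\Pi(X,X)$-weighted terms; all remaining pieces involve one covariant derivative of $h$ paired with $h$ itself and are pointwise $O(|h||\bD h|)$, entering the remainder $F_1(h)$. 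The careful bookkeeping of all the boundary integrations by parts on the closed surface $\Sigma$ is the sole technical hurdle, after which summation yields the Proposition.
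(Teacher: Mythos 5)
Your outline is, in substance, the paper's own proof: you start from Lemma \ref{lma-R-R}, integrate against $\l$, treat the linearized terms with Lemma \ref{lma-DRH} (the paper works with the intermediate identity \eqref{eq-pf-of-DRRstar} for $p=h^2$ rather than with $DH_{\bg}(h^2)$ explicitly, but with the dual field $h_{nn}X$ these are the same computation), integrate the cross term $\int_\Omega \la h,\bD^2\th\ra\,\l$ by parts twice, and reduce all boundary integrands in an adapted frame via the identities \eqref{eq-bdryfact-01}--\eqref{eq-bdryfact-44}. Up to this bookkeeping difference the route is identical.

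One step, however, fails as written: your claim that after extracting the $H(\bg)$- and $\Pi(X,X)$-weighted terms from $\int_\Sigma DH_{\bg}(h^2)\,\l\,\vsg$, ``all remaining pieces \dots are pointwise $O(|h||\bD h|)$, entering the remainder $F_1(h)$.'' The remainder $F_1$ must satisfy $|F_1|\le C|h|^2|\bD h|$, and the applications of the Proposition require the boundary error to be cubic in $h$, so genuinely quadratic terms of size $|h||\bD h|$ cannot be absorbed there. More importantly, these leftovers are not error terms: expanding $2DH_{\bg}(h^2)$ by \eqref{eq-DH-MT} (with $Y=h_{nn}X$) and \eqref{eq-divh2} leaves, beyond the displayed geometric terms, exactly $-2\,\div_\Sigma (h_{nn}X)$; tangential derivatives such as $h_{nn}\div_\Sigma X$ and $\la X,\bnabla^\Sigma\th\ra$ cannot be removed pointwise by the Gauss--Weingarten identities. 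Integrating $-\div_\Sigma(h_{nn}X)$ against $\l$ by parts on the closed hypersurface $\Sigma$ yields $+\int_\Sigma h_{nn}\la X,\bnabla^\Sigma\l\ra\,\vsg$, and this is precisely what reduces the coefficient $-2$ produced by your cross-term bookkeeping to the stated coefficient $-1$ of $\int_\Sigma h_{nn}\la X,\bnabla^\Sigma\l\ra\,\vsg$. Discarding these pieces into $F_1$ therefore both violates the stated bound on $F_1$ and leaves an identity that is false at quadratic order. The fix is the same surface integration by parts you already invoke for the cross term, keeping the factor $\l$ inside (so ``Stokes absorbs $\div_\Sigma X$'' must be read as producing a $\la X,\bnabla^\Sigma(h_{nn}\l)\ra$ contribution, not zero); compare \eqref{eq-divh2} and \eqref{eq-intDR3}, where every quadratic boundary piece is organized into $2h_{nn}DH_{\bg}(h)\l$, $-h_{nn}\la X,\bnabla^\Sigma\l\ra$ and the $H(\bg)$-, $\Pi$-weighted terms, with only cubic contributions left in $F_1$.
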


\begin{proof}
By \eqref{eq-intDR} with $ p = h $, using  the fact that $h|_{T(\S)}=0$, we have
\be \label{eq-intDR1}
\int_\Omega DR_{\bg} (h) \l \ \vbg =   \int_\Omega \la DR_{\bg}^* ( \l ), h \ra \ \vbg
- \int_\Sigma 2 DH_{\bg} (h)  \l \ \vsg.
\ee
By the second line in  \eqref{eq-pf-of-DRRstar}   with
$ p = h^2$,  and integrating by parts, we also have
\be\label{eq-intDR2}
\begin{split}
& \int_\Omega -\frac\l2 DR_{\bg}(h^2)+\l\la h,\bD^2\th\ra\ \vbg\\
=&\int_\Omega -\frac12 \la DR_{\bg}^*(\l),h^2\ra+\th\la h,\bD^2\l\ra\ \vbg
+\mathcal{B}
\end{split}
\ee
where
\be \label{eq-B}
 \begin{split}
 \mathcal{B}=&
 \int_\Sigma   \frac12 \lf[  \l \p_{\bnu} ( |h|^2 ) -  | h|^2  \p_{\bnu} \l
-  \l (\div_{\bg} h^2) (\bnu)  +  (h^2) (\bnu, \bnabla \l) \ri] \ \vsg  \\
 &+\int_\S \lf[\l h(\bnu,\bD \th)-\th h(\bnu,\bD \l)\ri] \vsg.
 \end{split}
 \ee

To compute $\mathcal{B}$,  let $\{ e_\a \  | \ 1\le \a\le n-1\}$ be an orthonormal frame on $\S$ and let $e_n=\bnu$. Denote $ \bD$ also by `` ; ", thus
$ h_{ij;k} = \bD_k h_{ij} $.
The assumptions $ h |_{T\Sigma} = 0 $
and $ \div_{\bg} h = 0 $  imply the following facts on $ \Sigma$:
\be \label{eq-bdryfact-01}
| h |^2 =  (h_{nn})^2 + 2 | X |^2, \
(h^2)_{nn} = (h_{nn})^2 + | X |^2, \
(h^2)_{n \alpha} = h_{nn} h_{n \alpha} ,
\ee
\be \label{eq-bdryfact-02}
(h^2)(\bnu, \bnabla \l ) =  [ (h_{nn})^2 + | X |^2 ] \l_{;n} + h_{nn} \la X, \bnabla^\Sigma \l \ra,
\ee
\be \label{eq-bdryfact-1}
 h_{\beta \gamma; \alpha} =  h_{\beta n} \Pi_{\gamma \alpha}
+ h_{n \gamma} \Pi_{\beta \alpha},
\ee
\be \label{eq-bdryfact-2}
h_{nn;\alpha} = ( \tr_{\bg} h)_{; \alpha} - \sum_{\beta = 1}^{n-1} h_{\beta \beta; \alpha}
=  ( \tr_{\bg} h)_{; \alpha} - 2 \Pi (X, e_\alpha),
\ee
\be \label{eq-bdryfact-3}
 0 = (\div h)_\alpha = h_{ \alpha n;n} + \sum_{\beta=1}^{n-1} h_{\alpha \beta; \beta}
=
 h_{\alpha n ;n} + h_{n \alpha} H(\bg)
+\Pi (X, e_\alpha) ,
\ee
\be \label{eq-bdryfact-4}
\begin{split}
 0 =  \ ( \div_{\bg} h)_{n}  = h_{nn;n} + \sum_{\alpha=1}^{n-1} h_{n\alpha; \alpha}
=  \ h_{nn;n} + \div_\Sigma X + h_{nn} H (\bg),
\end{split}
\ee
\be  \label{eq-bdryfact-44}
  2 DH_{\bg} (h) = ( \tr_{\bg} h)_{;n} - \div_\Sigma X ,
  \ee
  where \eqref{eq-bdryfact-44} follows from \eqref{eq-DH-MT}.
By \eqref{eq-bdryfact-2}-\eqref{eq-bdryfact-4}, we have
 \be \label{eq-divh2}
 \begin{split}
 &  \ \p_{\bnu} (|h|^2 ) - ( \div_{\bg} h^2 ) (\bnu)  \\
= & \ 3h_{n\a}h_{n\a;n} +h_{nn}h_{nn;n}-h_{n\a}h_{nn;\a}\\
  = &-\Pi(X,X)-3H(\bg)|X|^2-  H(\bg) (h_{nn})^2 -h_{nn} \div_\S X-\la X,\bD^\S \th\ra.
 \end{split}
 \ee
 By \eqref{eq-B}, \eqref{eq-bdryfact-01}, \eqref{eq-bdryfact-02}, \eqref{eq-divh2}
 and integration by parts, we have
\be\label{eq-intDR3}
\begin{split}
 \mathcal{B}
  =&\int_\Sigma \lf[ - (h_{nn})^2 - \frac12 | X |^2 \ri] \l_{;n} -   \int_\Sigma h_{nn} \la X, \bnabla^\Sigma \l \ra \\
& + \int_\Sigma \lf[
  - \frac12 \Pi (X, X) - \frac32  H(\bg)   |X|^2  - \frac12   H(\bg)  (h_{nn})^2   + 2h_{nn}   DH_{\bg} (h)   \ri] \l \vsg.
 \end{split}
\ee
Note that
\be\label{eq-intDR4}
\int_\Omega ( \bD_i E_1^i(h) ) \l  \ \vbg=-\int_\Omega  E_1^i(h)\bD_i\l \ \vbg+\int_\S \l F_1 (h)
\ \vsg
\ee
where $|F_1(h) = \la E_1(h), \bnu \ra  |\le C|h|^2 |\bD h|$.
Proposition \ref{prop-estimate-RbR}    now follows from Lemma \ref{lma-R-R}, \eqref{eq-intDR1}, \eqref{eq-intDR2}, \eqref{eq-intDR3}, and \eqref{eq-intDR4}.
  \end{proof}

The formula \eqref{eq-main-formula-1} next
 is a general form  of \cite[Theorem 10]{BrendleMarques},  which Brendle and Marques derived for geodesic balls in $\mathbb{S}^n$.

\begin{thm} \label{thm-main-formula-1}
Suppose $ g $ and $ \bg $ induce the same metric on $ \Sigma$
and  $ h $ satisfies
$ \div_{\bg} h = 0 .$
Given any $C^2$ function $ \l $ on $ \Omega$, one has
\begin{equation} \label{eq-main-formula-1}
 \begin{split}
& \   \int_\Omega \lf[R(g) - R(\bg) \ri] \l \  \vbg +  \int_\Sigma  ( 2 -  \tr_{\bg} h)
 \lf[H (g) - H (\bg) \ri]  \l \ \vsg \\
  = &  \ \int_\Omega \la h, DR^*_{\bg} ( \l) \ra \ \vbg
    -  \frac12 \int_\Omega \la  h^2,  DR^*_{\bg} (\l)  \ra \ \vbg  \\
& \
+  \int_\Omega \lf[  ( \tr_{\bg} h )  \la h, \nabla^2_{\bg} \l \ra  + \frac12   h^{ij} h^{kl} \bar{R}_{ikjl} \l  - \frac14 ( | \bD h |^2 + | \bnabla ( \tr_{\bg} h ) |^2 ) \l  \ri] \ \vbg \\& \ + \int_\Sigma \lf[ - \frac14 (h_{nn})^2 H(\bg)  - \frac12 ( \Pi(X, X) + H(\bg) |X|^2 ) \ri] \l \ \vsg  \\
& \ + \int_\Sigma \l_{;n} \lf[  - (h_{nn})^2 - \frac12 | X|^2 \ri] \ \vsg
+  \int_\Sigma (-1) h_{nn} \la X, \bnabla^\Sigma \l \ra   \ \vsg \\
& \  + \int_\Omega E(h) \l  \ \vbg + \int_\Omega Z^i (h)  \bD_i \l \ \vbg
+ \int_\Sigma F(h) \l \ \vsg
 \end{split}
 \end{equation}
where $ E(h)$ is a function and $ Z(h) $ is a vector field on $ \Omega$
satisfying
$$
| E(h) | \le C ( | h| | \bD h|^2 + | h |^3 ) ,  \
| Z (h)   | \le C |h|^2 | \bD h|,
$$
and $ F(h) $ is some function on $ \Sigma $ satisfying
$$
| F(h) | \le C ( | h|^2 | \bD h| + | h |^3 ) .$$
\end{thm}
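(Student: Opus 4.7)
The plan is to deduce Theorem \ref{thm-main-formula-1} directly from Proposition \ref{prop-estimate-RbR} by replacing the boundary factor $DH_{\bg}(h)$ there with the actual mean-curvature difference $H(g)-H(\bg)$, using Proposition \ref{prop-BM-H}. A term-by-term comparison of the two statements shows that the volume integrands and the boundary integrands involving $\l_{;n}$ or $\bnabla^\Sigma\l$ already match exactly; the only structural change is that the term $-\int_\Sigma(2-2\tr_{\bg}h)\,DH_{\bg}(h)\,\l\,\vsg$ in Proposition \ref{prop-estimate-RbR} must be converted into $-\int_\Sigma(2-\tr_{\bg}h)[H(g)-H(\bg)]\,\l\,\vsg$ (which, moved to the LHS, becomes the desired boundary piece of the theorem), and the residual $H(\bg)$-weighted quadratic boundary integrand should shift from $-\tfrac12 h_{nn}^2H(\bg)-\tfrac32|X|^2H(\bg)$ to $-\tfrac14 h_{nn}^2H(\bg)-\tfrac12|X|^2H(\bg)$, while the $-\tfrac12\Pi(X,X)$ piece is unaffected.

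Since $h|_{T\Sigma}=0$, one has $\tr_{\bg}h=h_{nn}$ on $\Sigma$. Writing $A=\sum_{\alpha=1}^{n-1}[2\bD_{e_\alpha}h(e_\alpha,\bnu)-\bD_{\bnu}h(e_\alpha,e_\alpha)]$, identity \eqref{eq-linearH-BM} rearranges as $A=h_{nn}H(\bg)-2DH_{\bg}(h)$. Substituting this into Proposition \ref{prop-BM-H} and expanding the factor $(1-\tfrac12 h_{nn})$ gives, modulo errors of size $O(|h|^2|\bD h|+|h|^3)$,
\begin{equation*}
2[H(g)-H(\bg)] \;=\; (2-h_{nn})\,DH_{\bg}(h)+\tfrac14 h_{nn}^2 H(\bg)+|X|^2 H(\bg).
\end{equation*}
Next I multiply by $(1-\tfrac12 h_{nn})$: the $DH_{\bg}(h)$-coefficient becomes $(2-h_{nn})(1-\tfrac12 h_{nn})=(2-2h_{nn})+\tfrac12 h_{nn}^2$, and since $DH_{\bg}(h)=O(|h|+|\bD h|)$ by \eqref{eq-linearH-BM}, the leftover $\tfrac12 h_{nn}^2 DH_{\bg}(h)$, together with the $-\tfrac12 h_{nn}$ correction applied to the quadratic remainder, is absorbed into the same error class. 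This yields the pointwise identity on $\Sigma$
\begin{equation*}
(2-h_{nn})[H(g)-H(\bg)] = (2-2h_{nn})\,DH_{\bg}(h)+\tfrac14 h_{nn}^2 H(\bg)+|X|^2 H(\bg)+O(|h|^2|\bD h|+|h|^3).
\end{equation*}

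Plugging this identity into Proposition \ref{prop-estimate-RbR} to rewrite the $-\int_\Sigma(2-2\tr_{\bg}h)\,DH_{\bg}(h)\,\l\,\vsg$ contribution and then transferring $-\int_\Sigma(2-\tr_{\bg}h)[H(g)-H(\bg)]\,\l\,\vsg$ to the left-hand side, the substitution produces an extra boundary contribution $\int_\Sigma[\tfrac14 h_{nn}^2H(\bg)+|X|^2H(\bg)]\,\l\,\vsg$ on the right. Adding it to Proposition \ref{prop-estimate-RbR}'s existing quadratic boundary integrand $-\tfrac12 h_{nn}^2H(\bg)-\tfrac12\Pi(X,X)-\tfrac32|X|^2H(\bg)$ gives exactly $-\tfrac14 h_{nn}^2H(\bg)-\tfrac12\Pi(X,X)-\tfrac12|X|^2H(\bg)$, which is the boundary integrand in \eqref{eq-main-formula-1}. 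The fresh $O(|h|^2|\bD h|+|h|^3)$ contributions coming from the substitution merge with Proposition \ref{prop-estimate-RbR}'s $F_1(h)$ into $F(h)$ (forcing the enlarged bound $|F(h)|\le C(|h|^2|\bD h|+|h|^3)$), while $E(h)$ and $-E_1^i(h)$ are carried through as $E(h)$ and $Z^i(h)$ with the stated sizes. The only real work is the bookkeeping and checking that every cross-term absorbed along the way has the claimed order; no new analytic ingredient beyond Propositions \ref{prop-BM-R} and \ref{prop-BM-H} is needed.
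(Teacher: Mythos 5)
Your proposal is correct and follows essentially the same route as the paper: the paper likewise rewrites Proposition \ref{prop-BM-H} (via \eqref{eq-linearH-BM}) into the identity $(2-h_{nn})[H(g)-H(\bg)]=(2-2h_{nn})DH_{\bg}(h)+\bigl[\tfrac14 h_{nn}^2+|X|^2\bigr]H(\bg)+O(|h|^2|\bD h|+|h|^3)$ and substitutes it into Proposition \ref{prop-estimate-RbR}, using $\tr_{\bg}h=h_{nn}$ on $\Sigma$. Your bookkeeping of the boundary quadratic terms and of the absorbed error terms matches the paper's proof.
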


\begin{proof}
Proposition \ref{prop-BM-H} implies
\be \label{eq-H-barH-andDH}
 2 [ H(g) -  H(\bg) ] =  2 DH_{\bg} (h) + J(h) + F_2 (h)
\ee
 where
 $$
 J (h) =   \lf[   \frac14 ( h_{nn})^2 +  | X |^2  \ri] H(\bg)
 - h_{nn}   DH_{\bg} (h)
 $$
and $ F_2 (h) $ is some function on $ \Sigma $  satisfying
$ | F_2 (h) | \le C ( | h|^2 | \bnabla h | + | h |^3 ) $.
Therefore
\be \label{eq-bdry-final}
\begin{split}
& \ ( 2 - h_{nn} ) [ H(g) - H (\bg) ] \\
=  & \ ( 2 - 2 h_{nn} ) DH_{\bg} (h)  +  \lf[   \frac14 ( h_{nn})^2 +  | X |^2  \ri] H(\bg) \\
& \ +  F_2 (h) - \frac12 h_{nn} [ J(h) + F_2 (h) ] .
\end{split}
\ee
\eqref {eq-main-formula-1} now follows readily from Proposition \ref{prop-estimate-RbR}
and \eqref{eq-bdry-final}.
\end{proof}

 The term $ DR^*_{\bg} ( \l ) $ in \eqref{eq-main-formula-1} may
  suggest that one consider a background metric $\bg$ which
  admits a nontrivial function $ \l $ such that $ DR^*_{\bg} (\l) = 0 $
 (such metrics are known as {\em static metrics} \cite{Corvino2000}.)
For instance, if $\bg$ is the standard metric on $ \mathbb{S}^n$
and $ \l  = \cos r $, where $ r $ is the $\bg$-distance to a point,
then  \eqref{eq-main-formula-1} reduces to the formula in
 \cite[Theorem 10]{BrendleMarques}.

Besides static metrics, one can also consider those  metrics $ \bg$
with the property that there exists a function $ \l $ such that
\be \label{eq-critical-metric}
DR^*_{\bg} (\l) = \bg .
\ee
These metrics were studied by the authors in \cite{MiaoTam2009} and \cite{MiaoTam2011}.
 In this case, the terms
 $$
 \int_\Omega \la h , DR^*_{\bg} (\l ) \ra \ \vbg  - \frac12 \int_\Omega \la h^2 , DR_{\bg}^* ( \l ) \ra \ \vbg
 $$
 in \eqref{eq-main-formula-1}  become
 $$
 \int_\Omega \tr_{\bg} h \ \vbg - \frac12 \int_\Omega | h |^2 \ \vbg .
 $$
 To compensate these terms,
 one can include the difference between the volumes of $ g $ and $ \bg$ into \eqref{eq-main-formula-1}.

\begin{cor} \label{cor-main-formula-V-critical}
Suppose $ \bg $ is a metric on $ \Omega$ with the property that
 there exists a function $ \l $ satisfying
 $ DR^*_{\bg} ( \l )  = \bg $.
 Let $ g = \bg + h $ be a nearby metric such that
 $ g $ and $ \bg $ induce the same metric on $ \Sigma$
and  $ h $ satisfies
$ \div_{\bg} h = 0 .$
 Let $ V(g)$, $ V(\bg)$ denote the volume of $(\Omega, g)$, $ (\Omega, \bg)$.
Then
\begin{equation} \label{eq-main-formula-V-critical}
 \begin{split}
& \ -2 ( V(g) - V(\bg) ) +   \int_\Omega \lf[R(g) - R(\bg) \ri] \l \  \vbg
+  \int_\Sigma ( 2 -  \tr_{\bg} h ) \lf[ H (g) - H (\bg) \ri] \l \ \vsg \\
  = & \ \int_\Omega  \lf[ - \frac14  - \frac{1}{n-1} \ri] ( \tr_{\bg} h)^2   \ \vbg
+  \int_\Omega \lf[  - \frac14 ( | \bD h |^2 + | \nabla_{\bg} ( \tr_{\bg} h ) |^2 ) \l  \ri] \ \vbg \\
 & \
+ \int_\Omega \lf[  \frac{1}{1-n} R(\bg) ( \tr_{\bg} h )^2
+ \la h, \Ric(\bg)   \ra  ( \tr_{\bg} h )
  + \frac12   h_{ij} h_{kl} R_{ikjl}  \ri] \l
\ \vbg \\
& \ + \int_\Sigma \lf[ - \frac14 (h_{nn})^2 H(\bg)  - \frac12 ( \Pi (X, X) + H(\bg) |X|^2 ) \ri] \l \ \vsg  \\
& \ + \int_\Sigma \l_{;n} \lf[  - (h_{nn})^2 - \frac12 | X|^2 \ri] \ \vsg
+  \int_\Sigma (-1) h_{nn} \la X, \bnabla^\Sigma \l \ra   \ \vsg \\
& \  + \int_\Omega G(h) \ \vbg +  \int_\Omega E(h) \l  \ \vbg + \int_\Omega Z^i (h) \bD_i \l \ \vbg  + \int_\Sigma F(h) \l \ \vsg
 \end{split}
 \end{equation}
where $ G(h)$ and $ E(h)$ are  functions on $ \Omega$ satisfying
$$ | G (h) | \le C | h |^3 , \ \  | E(h) | \le C ( | h| | \bD h|^2 + | h |^3 ) ,$$
$ Z (h)$ is a vector field on $ \Omega$
satisfying
$$ | Z  (h)   | \le C |h|^2 | \bD h|,  $$
and $ F(h) $ is a  function on $ \Sigma $ satisfying
$$
| F(h) | \le C ( | h|^2 | \bD h| + | h |^3 ) .$$
\end{cor}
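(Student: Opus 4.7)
The plan is to derive this corollary as a direct algebraic consequence of Theorem \ref{thm-main-formula-1} by substituting the assumption $DR^*_{\bg}(\lambda) = \bg$ into the formula \eqref{eq-main-formula-1} and then absorbing the linear-in-$h$ trace term into the volume difference via a Taylor expansion of the volume form. The key replacements in \eqref{eq-main-formula-1} are
\[
\langle h, DR^*_{\bg}(\lambda) \rangle = \tr_{\bg} h, \qquad \langle h^2, DR^*_{\bg}(\lambda) \rangle = \tr_{\bg}(h^2) = |h|^2,
\]
so the first two integrals on the right-hand side of \eqref{eq-main-formula-1} collapse to $\int_\Omega \tr_{\bg} h \, \vbg - \tfrac12 \int_\Omega |h|^2 \, \vbg$.

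Next I would expand $V(g) - V(\bg) = \int_\Omega (\sqrt{\det(I+\bg^{-1}h)} - 1) \, \vbg$ via the standard identity
\[
\sqrt{\det(I + A)} = 1 + \tfrac12 \tr A + \tfrac18 (\tr A)^2 - \tfrac14 \tr(A^2) + O(|A|^3)
\]
applied to $A = \bg^{-1} h$, which gives
\[
2\big(V(g) - V(\bg)\big) = \int_\Omega \tr_{\bg} h \, \vbg + \int_\Omega \Big[ \tfrac14 (\tr_{\bg} h)^2 - \tfrac12 |h|^2 \Big] \vbg + \int_\Omega G_0(h) \, \vbg,
\]
with $|G_0(h)| \le C|h|^3$. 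Subtracting this from the $\int_\Omega \tr_{\bg} h - \tfrac12 |h|^2 \,\vbg$ obtained above produces exactly the $-\tfrac14 (\tr_{\bg} h)^2$ contribution and absorbs the linear term into $-2(V(g)-V(\bg))$, which is precisely the LHS modification needed in \eqref{eq-main-formula-V-critical}.

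The remaining work is to rewrite the term $\int_\Omega (\tr_{\bg} h)\langle h, \nabla^2_{\bg} \lambda\rangle \vbg$ from \eqref{eq-main-formula-1} using the defining equation. From \eqref{eq-drstar} and $DR^*_{\bg}(\lambda) = \bg$,
\[
\nabla^2_{\bg} \lambda = (1 + \Delta_{\bg} \lambda)\bg + \lambda \Ric(\bg),
\]
so $\langle h, \nabla^2_{\bg} \lambda \rangle = (1 + \Delta_{\bg}\lambda)\tr_{\bg} h + \lambda \langle h, \Ric(\bg)\rangle$. Taking the trace of $DR^*_{\bg}(\lambda) = \bg$ yields
\[
-(n-1)\Delta_{\bg} \lambda - \lambda R(\bg) = n, \qquad \text{i.e.} \quad 1 + \Delta_{\bg} \lambda = -\frac{1 + \lambda R(\bg)}{n-1}.
\]
Multiplying by $\tr_{\bg} h$ and combining with the $-\tfrac14 (\tr_{\bg} h)^2$ already produced, the coefficient of $(\tr_{\bg} h)^2$ becomes $-\tfrac14 - \tfrac{1}{n-1}$, while the remaining pieces match $\tfrac{1}{1-n} R(\bg)(\tr_{\bg} h)^2 \lambda$ and $\lambda \langle h, \Ric(\bg)\rangle \tr_{\bg} h$, exactly as displayed in \eqref{eq-main-formula-V-critical}.

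The boundary terms, the curvature quadratic $\tfrac12 h_{ij} h_{kl} R_{ikjl} \lambda$, the Dirichlet-type term $-\tfrac14 (|\bD h|^2 + |\nabla_{\bg}(\tr_{\bg} h)|^2)\lambda$, and the error tensors $E(h)$, $Z(h)$, $F(h)$ transfer over unchanged from \eqref{eq-main-formula-1}, with the cubic remainder $G_0(h)$ from the volume expansion absorbed into the new function $G(h)$ satisfying $|G(h)| \le C|h|^3$. No step is conceptually hard; the main obstacle is purely bookkeeping, namely to ensure that the coefficient $-\tfrac14 - \tfrac{1}{n-1}$ is assembled correctly from the $-\tfrac14$ produced by the volume Taylor expansion and the $-\tfrac{1}{n-1}$ produced by the trace of the equation $DR^*_{\bg}(\lambda) = \bg$, and that the cubic error terms from the volume expansion are controlled uniformly for $|h|$ small.
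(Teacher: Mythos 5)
Your proposal is correct and follows essentially the same route as the paper: substitute $DR^*_{\bg}(\l)=\bg$ into \eqref{eq-main-formula-1}, use the second-order volume expansion (the paper's \eqref{eq-V}) to trade the linear term $\int_\Omega \tr_{\bg}h\,\vbg$ for $-2(V(g)-V(\bg))$ plus the $-\tfrac14(\tr_{\bg}h)^2$ correction, and rewrite $\nabla^2_{\bg}\l$ via the trace of the defining equation (the paper's \eqref{eq-hessian-V-critical}). Your coefficient bookkeeping, including the assembly of $-\tfrac14-\tfrac{1}{n-1}$ and the absorption of the cubic volume remainder into $G(h)$, agrees with the paper's proof.
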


\begin{proof}
The difference between the volumes of $ \bg $ and $ g = \bg + h $  is
\be \label{eq-V}
V(g) - V(\bg) = \int_\Omega   \frac12 ( \tr_{\bg} h )
+ \lf[ \frac18 ( \tr_{\bg} h)^2 - \frac14 | h |^2 \ri]    + G(h) \ \vbg ,
\ee
where $ G(h)$ is a function satisfying
$ | G(h) | \le C | h |^3 $
for a  constant $C$ depending only on $ n$.
Suppose $ DR^*_{\bg} ( \l) = \bg $, i.e.
$$
- ( \Delta_{\bg} \l ) \bg + \nabla^2_{\bg} \l - \l \Ric(\bg) = \bg .
$$
Taking trace, one has
$
\Delta_{\bg} \l = \frac{1}{1-n} [ R(\bg) \l + n ].
$
Thus,
\be \label{eq-hessian-V-critical}
\nabla^2_{\bg} \l = \frac{1}{1-n} [ R(\bg) \l  + 1 ] \bg + \l \Ric(\bg) .
\ee
\eqref{eq-main-formula-V-critical} follows  from \eqref{eq-main-formula-1}, \eqref{eq-V}
and  \eqref{eq-hessian-V-critical}.
\end{proof}

\section{volume constrained rigidity}  \label{rigidity-V}

We prove Theorem \ref{thm-VRH-rigidity-intro} in this section.
First, we recall its statement:

\begin{thm}\label{thm-VRH-rigidity}
Let $(\Omega,\bg)$ be an $ n $-dimensional  compact Riemannian manifold, of  constant sectional
curvature $  1$, with smooth boundary $\S$.
Suppose
 $\Pi + \bH \bar{\gamma}  \ge 0 $
 (i.e $ \Pi + \bH \bar{\gamma}  $ is positive semi-definite), where
$ \bar{\gamma} $ is the induced metric on $ \Sigma$
and $ \Pi $, $ \bH$ are the second
 fundamental form, the mean curvature of $ \Sigma$ in $(\Omega, \bg)$.
Suppose the first nonzero Neumann eigenvalue
$ \mu $  of $(\Omega, g)$ satisfies
$\mu>n-\frac2{n+1}$.

Consider a nearby metric $ g $ on  $\Omega$  with the properties
\begin{itemize}
\item  $R(g)\ge n(n-1)$ where $ R(g) $ is the scalar curvature of $ g$
\item $H(g)\ge \bH$ where $H(g)$ is the mean curvature of $\Sigma$ in $(\Omega, g)$
\item $ g $ and $ \bg $ induce the same metric on $ \Sigma$
\item $ V(g) \ge V(\bg)$ where $ V(g)$, $V(\bg)$ are the volumes of $ g$, $ \bg$.
\end{itemize}
If $ || g-\bg ||_{C^2(\bar{\Omega} )} $ is sufficiently small, then there is a diffeomorphism $\varphi$ on $\Omega$ with  $\varphi|_{\Sigma}=\text{\rm id}$, which is the identity map on $ \Sigma$, such that  $\varphi^*(g)=\bg$.
\end{thm}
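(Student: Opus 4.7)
The plan is to apply Corollary~\ref{cor-main-formula-V-critical} with a carefully chosen test function $\lambda$, combined with a gauge-fixing diffeomorphism and the spectral hypothesis on the Neumann Laplacian of $\bg$.

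First I would fix a gauge. For $\|g-\bg\|_{C^2}$ small, the elliptic system $\div_{\bg}(\mathcal{L}_X\bg)=-\div_{\bg}(g-\bg)$ with Dirichlet condition $X|_\Sigma=0$ has a unique small solution and defines a diffeomorphism $\psi:\Omega\to\Omega$ with $\psi|_\Sigma=\mathrm{id}$ such that $h:=\psi^{\ast}g-\bg$ satisfies $\div_{\bg}h=0$ and $h|_{T\Sigma}=0$. Since all quantities in the hypotheses are invariant under isometry, it suffices to prove $h=0$.

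Next, since $\bg$ has constant sectional curvature $1$ and $\Ric(\bg)=(n-1)\bg$, the constant function $\lambda=-\tfrac{1}{n-1}$ satisfies $DR_{\bg}^{\ast}(\lambda)=-(n-1)\lambda\,\bg=\bg$, with $\bnabla^2\lambda=0$ and $\lambda_{;n}=0$. Plugging this into Corollary~\ref{cor-main-formula-V-critical} and using $\bar R_{ikjl}=\bg_{ij}\bg_{kl}-\bg_{il}\bg_{kj}$, a direct computation (with $u:=\tr_{\bg}h$ and $h_0:=h-\tfrac{u}{n}\bg$) simplifies the identity to $\mathrm{LHS}=\mathcal{Q}(h)+\eta(h)$ with
\begin{align*}
\mathcal{Q}(h)&=\int_\Omega\!\Bigl[-\tfrac{n+2}{4n}u^2+\tfrac{1}{2(n-1)}|h_0|^2+\tfrac{1}{4(n-1)}\bigl(|\bD h|^2+|\bnabla u|^2\bigr)\Bigr]\vbg\\
&\quad+\tfrac{1}{n-1}\int_\Sigma\!\Bigl[\tfrac14(h_{nn})^2\bH+\tfrac12(\Pi+\bH\bar\gamma)(X,X)+\tfrac12\bH|X|^2\Bigr]\vsg,
\end{align*}
and $|\eta(h)|\le C\|h\|_{C^1}\|h\|_{H^1}^2$. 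The hypotheses $R(g)\ge n(n-1)$, $H(g)\ge\bH$, and $V(g)\ge V(\bg)$ together with $\lambda<0$ and $(2-\tr h)>0$ for small $h$ make each summand of $\mathrm{LHS}$ non-positive, hence $\mathrm{LHS}\le 0$.

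The heart of the proof is coercivity of $\mathcal{Q}$. Tracing $\Pi+\bH\bar\gamma\ge 0$ yields $\bH\ge 0$ pointwise, making the boundary integrand of $\mathcal{Q}$ manifestly non-negative. For the bulk, the pointwise identity $|\bD h|^2=|\bD h_0|^2+\tfrac{1}{n}|\bnabla u|^2$ combines the two gradient contributions into $\tfrac{1}{4(n-1)}|\bD h_0|^2+\tfrac{n+1}{4n(n-1)}|\bnabla u|^2$. Writing $u=\bar u+\tilde u$ with $\bar u$ the $\bg$-mean of $u$ and $\int\tilde u\,\vbg=0$, the Neumann inequality $\int|\bnabla\tilde u|^2\ge\mu\int\tilde u^2$ together with the numerical identity $n-\tfrac{2}{n+1}=\tfrac{(n-1)(n+2)}{n+1}$ converts the hypothesis $\mu>n-\tfrac{2}{n+1}$ into the strict inequality $\tfrac{(n+1)\mu}{4n(n-1)}>\tfrac{n+2}{4n}$, giving a strictly positive coefficient in front of $\int\tilde u^2$. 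This yields a bound of the form $\mathcal{Q}(h)\ge c_1\|h\|_{H^1}^2-C_1\bar u^2 V(\bg)$ for positive constants $c_1,C_1$.

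The remaining task, which I expect to be the main technical difficulty, is controlling the constant-trace mode $\bar u$. The volume constraint together with the expansion $V(g)-V(\bg)=\tfrac12\int u\,\vbg+O(\|h\|^2)$ forces $\bar u\ge-O(\|h\|_{L^2}^2)$. I would rewrite the identity as $\mathcal{Q}(h)+2(V(g)-V(\bg))=I_R+I_H-\eta(h)\le|\eta(h)|$, where $I_R,I_H\le 0$ denote the scalar and mean curvature contributions, then Taylor-expand $2(V(g)-V(\bg))$ so that its linear part $\bar u V(\bg)$ completes the square against the $-\tfrac{n+2}{4n}\bar u^2 V(\bg)$ term in $\mathcal{Q}(h)$; the residual quadratic corrections in $h_0,\tilde u$ are absorbed into the previously established coercive estimate. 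The result is an inequality $c\|h\|_{H^1}^2\le C\|h\|_{C^1}\|h\|_{H^1}^2$, which forces $h=0$ once $\|g-\bg\|_{C^2}$ is small enough, so $\psi^{\ast}g=\bg$.
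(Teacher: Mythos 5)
Up to the coercivity estimate your argument is essentially the paper's: gauge-fix to $\div_{\bg}h=0$ (the paper simply invokes \cite[Proposition 11]{BrendleMarques}; note that solving your linear equation for $X$ only kills the divergence to first order, so one still needs the implicit-function-theorem argument of that proposition), take $\lambda=-\tfrac{1}{n-1}$ so that $DR^*_{\bg}(\lambda)=\bg$, apply Corollary \ref{cor-main-formula-V-critical}, observe that the left-hand side is nonpositive under the hypotheses, use $\Pi+\bH\bar\gamma\ge 0\Rightarrow\bH\ge 0$ for the boundary terms, and use the strict inequality $\mu>n-\tfrac{2}{n+1}$ on the mean-free part of $u=\tr_{\bg}h$. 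Your $\mathcal{Q}(h)$ is the paper's quadratic expression in \eqref{eq-VRH-rigidity-1} rewritten via $|h|^2=|h_0|^2+u^2/n$ (apart from a harmless slip in the coefficient of $\bH|X|^2$ on the boundary), and your bound $\mathcal{Q}(h)\ge c_1\|h\|^2_{H^1}-C_1\bar u^2V(\bg)$ is the content of \eqref{eq-getep}--\eqref{eq-choseep}.

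The gap is in the final step, the control of $\bar u$. If you move $2(V(g)-V(\bg))$ to the quadratic side and Taylor-expand it, the quadratic correction is $\tfrac14\int u^2-\tfrac12\int|h|^2=-\tfrac12\int|h_0|^2+\tfrac{n-2}{4n}\int u^2$, and the term $-\tfrac12\int|h_0|^2$ cannot be ``absorbed into the previously established coercive estimate'': the only zeroth-order positivity in $h_0$ available in $\mathcal{Q}$ is $\tfrac{1}{2(n-1)}\int|h_0|^2$, which is strictly smaller than $\tfrac12\int|h_0|^2$ for $n\ge 3$ (and the constant $c_1$ coming from the Neumann gap can be arbitrarily small), while $|\bD h_0|^2$ gives no $L^2$ control of $h_0$ in the absence of a Poincar\'e-type inequality. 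Likewise, literally completing the square in $\bar uV-\tfrac{n+2}{4n}\bar u^2V$ only yields the vacuous constant bound $\tfrac{n}{n+2}V$; what is needed is a sign argument. The paper's mechanism is different: keep $-2(V(g)-V(\bg))\le 0$ intact on the left, bound the mean mode by $(\int_\Omega u\,\vbg)^2\le C(V(g)-V(\bg))^2+C\int_\Omega|h|^4\,\vbg$ using \eqref{eq-V-2}, and then use $V(g)\ge V(\bg)$ to get $(V(g)-V(\bg))^2\le C(\int_\Omega|h|\,\vbg)(V(g)-V(\bg))$ as in \eqref{eq-V-3}, so that this term is absorbed by the retained linear volume term once $\|h\|_{C^1}$ is small (see \eqref{eq-VRH-rigidity-2}). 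Your rearrangement can be repaired in the same spirit provided you do not expand $V(g)-V(\bg)$: use $2(V(g)-V(\bg))\ge 0$ wholesale, and estimate $C_1\bar u^2V$ by splitting cases, namely $\bar u<0$, where the volume constraint forces $|\bar u|\le C\|h\|^2_{L^2}$ so $\bar u^2V=O(\|h\|^4)$, and $\bar u\ge 0$, where $\bar u^2V\le C\|h\|_{C^0}\bigl(2(V(g)-V(\bg))+C\|h\|^2_{L^2}\bigr)$ is absorbed by the volume term and the coercive term. As written, however, the mean-mode step of your proposal does not close.
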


\begin{proof}
Fix a real number $ p > n$.  By \cite[Proposition 11]{BrendleMarques},
if $ || g - \bg ||_{W^{2,p}({\Omega})} $ is sufficiently small,
 there exists a $ W^{3,p}$ diffeomorphism $\varphi$ on $\Omega$ with   $\varphi |_{\Sigma}=\text{\rm id}$
such that   $ h = \varphi^*(g) - g $ is  divergence free with respect to $ \bg $, and $ || h ||_{W^{2,p}(\Omega) }\le N || g - \bg ||_{W^{2,p} (\Omega)} $
for some positive constant $ N$ depending only on $ (\Omega, \bg)$.
Replacing $ g $ by $ \varphi^*(g)$, we may assume
$ g = \bg + h $ with $ \div_{\bg} h = 0 $. We want to prove that if $ || h ||_{C^1(\bar{\Omega})}$ is sufficiently small and $g$ satisfies the conditions in the theorem, then $h$ must be zero.

Since $\bg$ has constant sectional curvature 1, we choose $\l=-\frac1{n-1}$
such that $DR_{\bg}^*(\l)=\bg$.  Corollary \ref{cor-main-formula-V-critical} then shows
  \begin{equation} \label{eq-VRH-rigidity-1}
 \begin{split}
 & \ -2 ( V(g) - V(\bg) ) - \frac1{n-1}   \int_\Omega \lf[R(g) - R(\bg) \ri] \  \vbg\\
& \ -\frac1{n-1}  \int_\Sigma ( 2 -  \th ) \lf[ H (g) - H (\bg) \ri]  \ \vsg \\
  \ge & \ \frac1{4(n-1)}\int_\Omega  \lf[ - (n+1) (\th)^2  +2|h|^2+ | \bD h |^2 + | \bD ( \th ) |^2 \ri]   \ \vbg \\
& \ + \frac{1}{4(n-1)}\int_\Sigma \lf[   (h_{nn})^2 H(\bg)  +2 ( \Pi (X, X) + H(\bg) |X|^2 ) \ri]   \ \vsg  \\
& \  - C  || h ||_{C^1 ( \bar{\Omega} )  } \lf[ \int_\Omega ( | h |^2 + | \bD h |^2 ) \ \vbg
  + \int_\Sigma | h |^2 \ \vsg \ri]
 \end{split}
 \end{equation}
 for a constant $C$ depending only on $(\Omega, \bg)$.

Using the variational property of $ \mu$, we have
\be \label{eq-mu-new}
\int_\Omega | \bD ( \th ) |^2 \ \vbg \ge \mu
\lf[ \lf( \int_\Omega ( \th )^2 \ \vbg \ri) - \frac{1}{V(\bg)} \lf( \int_\Omega \th \ \vbg \ri)^2 \ri] .
\ee
By  \eqref{eq-V},  $ \int_\Omega \th \ \vbg$ is related to $ (V(g) - V(\bg))$  by
\be \label{eq-V-2}
 \int_\Omega \th \ \vbg =
 2 ( V(g) - V(\bg) )
-  \int_\Omega  \lf\{ \lf[ \frac14 ( \tr_{\bg} h)^2 - \frac12 | h |^2 \ri] + 2 G(h) \ri\}   \ \vbg ,
\ee
where $ G(h) \le C | h |^3$.

Given any constant $ 0 < \ep < 1$,  using  \eqref{eq-mu-new} and  the fact
 $|h|^2\ge\frac1n (\th)^2 $ and $|\bD h|^2\ge \frac1n|\bD (\th) |^2$, we have
\be \label{eq-getep}
\begin{split}
& \int_\Omega  \lf[ - (n+1) (\th)^2  +2|h|^2+ | \bD h |^2 + | \nabla_{\bg} (\th) |^2 \ri]   \ \vbg\\
\ge &  \int_\Omega \lf[   \ep |h|^2 + \ep | \bD h |^2  +   \lf[ - (n+1) + \frac{2 - \ep}{n} \ri]  (\th)^2 + \lf[ \frac{( 1 - \ep)}{n}  + 1 \ri] | \bD (\th) |^2  \ri]   \ \vbg\\
\ge & \int_\Omega \lf[   \ep |h|^2 + \ep | \bD h |^2  +   \lf[ - (n+1) + \frac{2 - \ep}{n}
+ \frac{( 1 - \ep)}{n} \mu + \mu \ri]   (\th)^2  \ri]  \ \vbg\\
& -  \mu \lf[ \frac{( 1 - \ep)}{n}  + 1 \ri] \frac{1}{V(\bg)} \lf( \int_\Omega \th  \ \vbg \ri)^2 .
\end{split}
\ee
Since $ \mu > n - \frac{2}{n+1} $, we can chose $ \ep $ (depending only on $\mu$ and $n$)  such that
\be \label{eq-choseep}
 \lf[ - (n+1) + \frac{2 - \ep}{n}
+ \frac{( 1 - \ep)}{n} \mu + \mu \ri]  \ge 0 .
\ee
Then it follows from \eqref{eq-V-2},  \eqref{eq-getep} and \eqref{eq-choseep}
that
\be \label{eq-getep2}
\begin{split}
& \int_\Omega  \lf(- (n+1) ( \th) ^2  +2|h|^2+ | \bD h |^2 + | \bD ( \th  ) ) |^2 \ri)   \ \vbg\\
\ge & \ \ep \int_\Omega \lf(   |h|^2 + | \bD h |^2 \ri)  \ \vbg
 -C_1 ( V(g) - V(\bg) )^2 -C_1\int_\Omega |h|^4 \ \vsg\\
\end{split}
\ee
where $ C_1$ is a positive constant  depending only on $ (\Omega, \bg)$.

At the boundary $ \Sigma$, the  assumption  $ \Pi + H(\bg) \bar{\gamma} \ge 0$
implies $ H(\bg) \ge 0 $, therefore
\be \label{eq-bdry-new}
\int_\Sigma \lf[   (h_{nn})^2 H(\bg)  +2 ( \Pi (X, X) + H(\bg) |X|^2 ) \ri]   \ \vsg \ge 0
\ee
for any $ h$.
By \eqref{eq-VRH-rigidity-1}, \eqref{eq-getep2} and \eqref{eq-bdry-new},  we have
  \begin{equation} \label{eq-VRH-rigidity-2}
 \begin{split}
 & \ - 8 (n-1) ( V(g) - V(\bg) ) - 4   \int_\Omega \lf[R(g) - R(\bg) \ri] \  \vbg\\
& \  - 4  \int_\Sigma ( 2 -  \th ) \lf[ H (g) - H (\bg) \ri]  \ \vsg \\
  \ge & \ {\ep} \int_\Omega \lf( |h|^2+   |\bD h|^2\ri)\ \vbg
 \\
& \ -C ( V(g) - V(\bg) )^2 -C \int_\Omega |h|^4  \ \vbg \\
  & \ - C  || h ||_{C^1 ( \bar{\Omega} )  } \lf[ \int_\Omega ( | h |^2 + | \bD h |^2 ) \ \vbg
  + \int_\Sigma | h |^2 \ \vsg \ri]
   \end{split}
 \end{equation}
 for some positive constant $ C$ depending only on $ (\Omega, \bg)$.

 Finally,  we note that
  \be \label{eq-V-3}
  \lf( V(g) - V(\bg) \ri)^2
\le    C \lf( \int_\Omega | h | \ \vbg \ri)  ( V(g) - V(\bg) )
 \ee
  by \eqref{eq-V-2} and the assumption $ V (g) \ge V(\bg)$.
 Also, by the trace theorem,
 \be\label{eq-trace bound}
 ||h||_{L^2(\Sigma)}\le C||h||_{W^{1,2}(\Omega)}
  \ee
  for a constant $ C$ only depending on $ \Omega$.
Therefore,  by  \eqref{eq-VRH-rigidity-2}, \eqref{eq-V-3}, \eqref{eq-trace bound} and the
assumptions $ V(g) \ge V(\bg)$, $ R(g) \ge R(\bg) $ and $ H(g) \ge H(\bg)$ , we conclude that
if $ || h ||_{C^1 (\bar{\Omega}) } $ is sufficiently small,  then
 \be
 0 \ge \frac{\ep}{2}    \int_\Omega ( | h |^2 + | \bD h |^2 ) \ \vbg
 \ee
 which implies $ h $ must be identically zero.
 This completes the proof.
  \end{proof}

\begin{rem}
In Theorem \ref{thm-VRH-rigidity},  if $ \Sigma$ is indeed empty, i.e $(\Omega, \bg)$ is a closed space form,
its first nonzero Neumann eigenvalue  satisfies   $\mu \ge n$ as
$(\Omega, \bg) $ is covered by $ \mathbb{S}^n$.
In this  case, Theorem \ref{thm-VRH-rigidity} says  that {$V (g) \ge V(\bg)$
implies $ g $ is isometric to $ \bg$ for a nearby metrics $ g$ with $R(g)\ge R(\bg) $}.
This could be compared to a more profound theorem known in $3$-dimension:
``{\em If $(M,g)$ is closed $3$-manifold with $R(g)\ge 6$, $\Ric(g)\ge g$ and  $V(g)\ge V(\mathbb{S}^3)$, then $(M,g)$ is isometric to $\mathbb{S}^3$}." (See \cite[Corollary 5.4]{Brendle} and earlier reference of   \cite{Bray-thesis, GurskyViaclovsky2004})
\end{rem}

When $ \Sigma \neq \emptyset $,  the boundary assumption $ \Pi + \bH \bar{\gamma} \ge 0 $ in Theorem
\ref{thm-VRH-rigidity} can be relaxed
in certain circumstances. A detailed examination of the above proof shows,  if
\be
 \Pi(v,v) + \bH  \bar{\gamma} \ge  - \beta \bar{\gamma}
\ee
for some positive constant $ \beta$, where $ \beta $ is sufficiently small comparing to
the constant $ \ep $ in \eqref{eq-choseep} and the constant $ C$ in
\eqref{eq-trace bound}, then the conclusion of Theorem \ref{thm-VRH-rigidity}  still
holds on such an $ (\Omega, \bg)$.
In particular, this shows

\begin{cor} \label{cor-VRH-rigidity}
Let $(M, \bg)$ be an $n$-dimensional Riemannian manifold of constant sectional curvature $ 1$. Suppose $ \Omega \subset M$ is a bounded domain with smooth boundary $ \Sigma$, satisfying the assumptions in Theorem \ref{thm-VRH-rigidity},
 i.e $ \mu > n - \frac{2}{n+1} $ and
$ \Pi + \bH \bar{\gamma} \ge 0 $ on $ \Sigma$.
Let $ \tilde{\Omega} \subset M $ be another bounded domain with smooth boundary
$ \tilde{\Sigma}$. If $ \tilde{\Sigma} $ is sufficiently close to $ \Sigma$ in the
$ C^2 $ norm, then the conclusion of Theorem \ref{thm-VRH-rigidity}  holds
on $ \tilde{\Omega}$.
\end{cor}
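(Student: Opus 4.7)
The plan is to reduce the corollary to the quantitative relaxation of Theorem \ref{thm-VRH-rigidity} spelled out in the paragraph preceding the statement. Concretely, I need to verify, for $\tilde{\Omega}$ sufficiently $C^2$-close to $\Omega$, three facts: (i) there exists a small $\beta > 0$ with $\tilde{\Pi} + \tilde{H}\tilde{\gamma} \ge -\beta \tilde{\gamma}$ on $\tilde{\Sigma}$; (ii) the first nonzero Neumann eigenvalue $\tilde{\mu}$ of $(\tilde{\Omega},\bg)$ still satisfies $\tilde{\mu} > n - \frac{2}{n+1}$; and (iii) the constants $\ep$ in \eqref{eq-choseep} and $C$ in \eqref{eq-trace bound} for $\tilde{\Omega}$ admit uniform bounds so that the required smallness of $\beta$ can be chosen uniformly.

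For (i), I would parametrize $\tilde{\Sigma}$ by a diffeomorphism $\psi: \Sigma \to \tilde{\Sigma}$ extending to an ambient $C^2$-small perturbation $\Psi$ of the identity (using the tubular neighborhood of $\Sigma$ in $M$). Since $\bg$ is fixed on $M$, the induced metric, second fundamental form, and mean curvature of $\tilde{\Sigma}$, when pulled back by $\psi$, are $C^0$-close to $\bar{\gamma}$, $\Pi$, and $\bH$ respectively. Using $\Pi + \bH \bar{\gamma} \ge 0$ on $\Sigma$, we obtain $\tilde{\Pi} + \tilde{H}\tilde{\gamma} \ge -\beta \tilde{\gamma}$ on $\tilde{\Sigma}$ with $\beta \to 0$ as $\|\tilde{\Sigma} - \Sigma\|_{C^2} \to 0$.

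For (ii), transferring the Neumann problem on $(\tilde{\Omega},\bg)$ back to $\Omega$ via $\Psi$ produces an elliptic problem whose coefficients and volume form are $C^0$-close to those of the original Laplacian, with conormal (Neumann) boundary condition. Standard elliptic perturbation theory then gives continuous dependence of the first nonzero eigenvalue, so $\tilde{\mu} \to \mu$ and the strict inequality $\mu > n - \frac{2}{n+1}$ persists. A similar argument handles (iii): the constant $\ep$ in \eqref{eq-choseep} depends only on $n$ and $\tilde{\mu}$, so stays uniformly bounded below; the trace constant in \eqref{eq-trace bound} depends continuously on the Lipschitz norm of the domain and also remains uniformly bounded above.

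Once (i)--(iii) are in place, the relaxation immediately following the proof of Theorem \ref{thm-VRH-rigidity} applies verbatim to $\tilde{\Omega}$, yielding the desired rigidity. The main (modest) obstacle is (ii): one must confirm continuous dependence of Neumann eigenvalues under $C^2$ boundary perturbation, which is standard but requires a careful setup with the pullback diffeomorphism and ensuring the perturbed bilinear form is a small perturbation in the appropriate Sobolev sense. All other steps are direct continuity statements for geometric quantities.
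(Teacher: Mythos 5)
Your proposal is correct and follows essentially the same route as the paper, which deduces the corollary directly from the remark that the boundary hypothesis may be relaxed to $\Pi + \bH \bar{\gamma} \ge -\beta \bar{\gamma}$ for $\beta$ small relative to the constants $\ep$ in \eqref{eq-choseep} and $C$ in \eqref{eq-trace bound}. You simply make explicit the continuity facts the paper leaves implicit (closeness of the second fundamental form and mean curvature, stability of the first nonzero Neumann eigenvalue, and uniformity of the constants under $C^2$ perturbation of the boundary), all of which are standard.
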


It is known that the fist nonzero Neumann eigenvalue of $ \mathbb{S}^n_+$ is $n$ (see \cite[Theorem 3]{Chavel}). Therefore, Theorem \ref{thm-intro-Vhalfsphere} follows from Theorem \ref{thm-VRH-rigidity}.
Moreover, by Corollary \ref{cor-VRH-rigidity},
Theorem \ref{thm-VRH-rigidity}  holds on a geodesic ball in $ \mathbb{S}^n $ whose
 radius  is slightly larger than $ \frac{\pi}{2} $.

 By the next lemma, we know Theorem \ref{thm-VRH-rigidity} also  holds on any geodesic ball in $ \mathbb{S}^n$ that is  strictly contained in $ \mathbb{S}^n_+$.

\begin{lma}\label{lma-mu-estimate}
Let $ B(\delta) \subset \mathbb{S}^n$ be a geodesic ball of radius $ \delta$.
Let $ \mu (\delta) $ be the first nonzero Neumann eigenvalue of $ B(\delta)$.
\begin{enumerate}
\item[(i)] $ \mu( \delta) $ is a strictly decreasing function of $ \delta$ on $(0, \frac{\pi}{2}]$.
\item[(ii)] For any $ 0< \delta< \frac{\pi}{2}$,
$$ \mu (\delta) > n + \frac{ (\sin \delta)^{n-2} \cos \delta}{ \int_0^\delta ( \sin  t)^{n-1} d t }
>  \frac{n}{ (\sin \delta)^{2} }.$$
\end{enumerate}
\end{lma}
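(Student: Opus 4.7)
The plan is to reduce the Neumann eigenvalue problem on the rotationally symmetric ball $B(\delta) \subset \mathbb{S}^n$ to a one-dimensional Sturm--Liouville problem via separation of variables. The first nonzero Neumann eigenfunction lies in the ``$k=1$'' angular sector, so it has the form $u(r,\theta) = f(r)\omega(\theta)$ with $\omega$ a first-order spherical harmonic, and the radial part satisfies
\begin{equation*}
-f''(r) - (n-1)\cot r \cdot f'(r) + \frac{n-1}{\sin^2 r}f(r) = \mu(\delta) f(r),\qquad f(0)=0,\ f'(\delta)=0.
\end{equation*}

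For part (i), I would apply Hadamard's first-variation formula to the family $\{B(\delta)\}$. Using the Neumann condition on the boundary together with the identity $|\nabla^{\mathbb{S}^{n-1}}\omega|^2 = (n-1)\omega^2$, the derivative of $\mu(\delta)$ reduces to a strictly positive multiple of $[(n-1)/\sin^2\delta - \mu(\delta)]$. The strict negativity $\mu'(\delta)<0$ then follows from the bound $\mu(\delta) > (n-1)/\sin^2\delta$, which is immediate from the Rayleigh quotient for $f$: drop the nonnegative $(f')^2$ term, then observe that the weighted mean of $1/\sin^2 r$ against the non-Dirac measure $f^2(\sin r)^{n-1}$ strictly exceeds its minimum value $1/\sin^2\delta$ on $(0,\delta]$.

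For part (ii), my approach has three ingredients. First, multiplying the ODE by $(\sin r)^{n-1}$ and integrating yields $\mu\int f(\sin r)^{n-1}dr = (n-1)\int f(\sin r)^{n-3}dr$. Substituting $f = (\sin r)v$ and applying the identity $(n-1)(\sin r)^{n-2} - n(\sin r)^n = \frac{d}{dr}[(\sin r)^{n-1}\cos r]$, followed by integration by parts, gives
\begin{equation*}
\mu(\delta) - n = \frac{v(\delta)(\sin\delta)^{n-1}\cos\delta - \int_0^\delta v'(r)(\sin r)^{n-1}\cos r\,dr}{\int_0^\delta v(r)(\sin r)^n\,dr}.
\end{equation*}
The Sturm--Liouville form $[(\sin r)^{n+1}v']' + (\mu-n)(\sin r)^{n+1}v = 0$ with $(\sin r)^{n+1}v'$ vanishing at $r=0$ forces $v$ to be positive and strictly decreasing on $(0,\delta]$, so the integral in the numerator is strictly negative and one concludes $\mu(\delta) - n > v(\delta)(\sin\delta)^{n-1}\cos\delta / \int_0^\delta v(r)(\sin r)^n dr$. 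Second, I would establish the monotonicity $f(r) \leq f(\delta)$ on $[0,\delta]$ by analyzing $G(r) = f'(r)(\sin r)^{n-1}$: a direct ODE computation gives $G'(r) = f(r)(\sin r)^{n-3}[(n-1) - \mu\sin^2 r]$, whose sign changes exactly once on $(0,\delta)$ at $r_c = \arcsin\sqrt{(n-1)/\mu}$ (with $r_c < \delta$ by the bound from (i)). Combined with $G(0) = G(\delta) = 0$, this forces $G > 0$ on $(0,\delta)$, so $f' > 0$ and $f$ is strictly increasing. Third, this monotonicity yields $\int v(\sin r)^n dr = \int f(\sin r)^{n-1}dr \leq v(\delta)\sin\delta\int(\sin r)^{n-1}dr$, which combined with the previous strict inequality gives the claimed bound $\mu(\delta) - n > (\sin\delta)^{n-2}\cos\delta / \int_0^\delta(\sin t)^{n-1}dt$. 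The key technical ingredient is the $G$-function argument establishing the monotonicity of $f$; the rest is integration by parts and algebra. Finally, the second inequality of (ii) follows from the first by elementary calculus: it reduces to $(\sin\delta)^n > n\cos\delta\int_0^\delta(\sin t)^{n-1}dt$, whose difference vanishes at $\delta=0$ and has strictly positive derivative $n\sin\delta\int_0^\delta(\sin t)^{n-1}dt$ on $(0,\pi/2]$.
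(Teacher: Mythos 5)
Your argument is essentially correct, but it takes a genuinely different route from the paper. The paper leans on Chavel's characterization of $\mu(\delta)$ as the eigenvalue of the radial problem with a \emph{monotone} eigenfunction $J$, passes to the Riccati variable $f_i=(\sin t)^{n-1}J_i'/J_i$ for two radii $\delta_1<\delta_2$, proves (i) by a comparison argument for the resulting Riccati equations, and gets (ii) by showing $f_2>f_1$ on all of $(0,\delta_1]$ and integrating the difference of the two equations, finally specializing $\delta_2=\pi/2$ where $J_2=\sin t$ and $f_2=(\sin t)^{n-2}\cos t$ are explicit (this gives a quantitative gap between $\mu(\delta_1)$ and $\mu(\delta_2)$ for \emph{any} pair of radii). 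You instead prove (i) by a first-variation (Hadamard/Feynman--Hellmann) formula combined with the Rayleigh-quotient bound $\mu(\delta)>(n-1)/\sin^2\delta$, and (ii) by the substitution $v=f/\sin r$, the transformed equation $[(\sin r)^{n+1}v']'+(\mu-n)(\sin r)^{n+1}v=0$, one integration by parts, and the monotonicity of $f$ extracted from the sign analysis of $G=(\sin r)^{n-1}f'$ — a monotonicity the paper simply imports from Chavel. All your identities check out (including the reduction of the second inequality in (ii) to $(\sin\delta)^n>n\cos\delta\int_0^\delta(\sin t)^{n-1}dt$), and your method yields the same strict bounds.

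A few steps need patching. First, the first nonzero Neumann eigenvalue of $B(\delta)$ has multiplicity $n$, so the Hadamard formula for a simple eigenvalue does not apply verbatim; the clean fix is to differentiate the (simple) lowest eigenvalue of the one-dimensional Sturm--Liouville problem in $\delta$, which gives exactly your formula $\mu'(\delta)=c(\delta)\left[(n-1)/\sin^2\delta-\mu(\delta)\right]$ with $c(\delta)>0$. Relatedly, the pointwise identity $|\nabla^{\mathbb{S}^{n-1}}\omega|^2=(n-1)\omega^2$ is false (for $\omega=x_1$ one has $1-\omega^2$); only its integral over $\mathbb{S}^{n-1}$ holds, which is all the variation formula needs. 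Second, your $G$-argument and the positivity of $v$ both use $f>0$ on $(0,\delta]$; this must be justified (e.g.\ by Courant's nodal domain theorem applied to $u=f\omega$, or by citing Chavel's characterization as the paper does), since otherwise the sign of $G'$ is not determined. Third, and most importantly, the inference that $v$ is strictly decreasing from the Sturm--Liouville form presupposes $\mu>n$ — if $\mu\le n$ the same reasoning gives the opposite monotonicity — and at that point of your argument $\mu>n$ has not been established. You can supply it from (i) together with the classical value $\mu(\pi/2)=n$ (which you never state but implicitly use), or more directly: the Neumann condition $f'(\delta)=0$ gives $v'(\delta)=-v(\delta)\cot\delta<0$, while integrating the transformed equation gives $(\sin\delta)^{n+1}v'(\delta)=-(\mu-n)\int_0^\delta(\sin t)^{n+1}v\,dt$, which forces $\mu>n$ and then $v'<0$ on all of $(0,\delta]$. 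With these repairs the proof goes through.
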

\begin{proof}
By  \cite[Theorem 2, p.44]{Chavel}, $ \mu(\delta)$ is characterized by the fact
  that
 \be \label{e-J-1}
\lf\{  ( \sin t)^{n-1} J^\prime \ri\}^\prime + [ \mu (\delta)  - (n-1) (\sin t)^{-2} ] ( \sin t)^{n-1} J = 0
\ee
has a solution $ J = J(t) $  on $ [0 , \delta ] $ satisfying
\be \label{e-J-condition}
J(0) = 0 , \ J^\prime ( \delta ) = 0, \ J^\prime(t) \neq 0, \ \forall \ t \in [0, \delta) .
\ee

Given $ 0 < \delta_1 < \delta_2 \le \frac{\pi}{2} $,   let $ J_i = J_i (t) $ be a solution to
\eqref{e-J-1} with $ \mu(\delta) $ replaced by $ \mu(\delta_i)$,
satisfying \eqref{e-J-condition} on $ [0, \delta_i]$, $ i = 1, 2$.
Replacing $ J_i $ by $ - J_i $ if necessary,  we may
assume that $J'_i>0$ on $[0,\delta_i)$, hence $J_i>0$ on $(0,\delta_i]$.
Define
$$
f_i=\frac{ (\sin t)^{n-1} J'_i}{J_i}, \
\beta_i(t)=\lf[ \mu(\delta_i) - \frac{n-1}{ (\sin t)^2}\ri] (\sin t)^{n-1}.
$$
By \eqref{e-J-1}, $f_i $ satisfies
$$
f'_i = - \beta_i -\frac{1}{ (\sin t) ^{n-1} }f_i^2.
$$
Therefore, on $(0, \delta_1]$,
\be\label{eq-f1-f2}
( f_1 - f_2)^\prime = \frac{1}{ ( \sin t)^{n-1}  }(f_2^2-f^2_1)
+ [ \mu ( \delta_2) - \mu (\delta_1) ] ( \sin t)^{n-1}.
\ee
Note that $ f_1(t)$, $f_2(t)$ can be extended continuously to $ 0 $ such that
$ f_1 (0) = f_2(0) $. Moreover,  $ f_1>0$, $f_2>0$  on  $(0,\delta_1)$,
$ f_2(\delta_1)>0 = f_1(\delta_1) $.
Let $0\le t_0<\delta_1$ be such that $f_1=f_2$ at $t_0$ and $f_2>f_1$ for $t_0< t \le \delta_1$. On $(t_0,\delta_1]$, one would have
$ (f_1-f_2)^\prime >0 $
if $ \mu (\delta_2) \ge \mu(\delta_1) $, which is a contradiction to $ f_2 > f_1$.
Therefore,  $ \mu ( \delta_2) < \mu (\delta_1) .$ This proves (i).

To prove (ii), we further claim that $ t_0 = 0 $, i.e. $f_2>f_1$ on $(0,\delta_1]$.
If not,   there would  be a nonpositive local  minimum  of $ (f_2-f_1)$ at some
$ \tilde{t}_0\in (0, t_0]$.
At $ \tilde{t}_0$,  \eqref{eq-f1-f2} implies
\be
0=( f_1 - f_2)^\prime\le [ \mu ( \delta_2) - \mu (\delta_1) ] ( \sin \tilde{t}_0)^{n-1}<0
\ee
because $ 0 < f_2 ( \tilde{t}_0) \le f_1 ( \tilde{t}_0 ) $ and
$\mu(\delta_2)<\mu(\delta_1)$. Hence $f_2>f_1$ on $(0,\delta_1]$.
Integrating \eqref{eq-f1-f2} on $[0, \delta_1]$,  we  have
\be \label{eq-mu-integrate}
-f_2(\delta_1)= \int_0^{\delta_1}( f_1 - f_2)^\prime dt
>    [ \mu ( \delta_2) - \mu (\delta_1) ] \int_0^{\delta_1}( \sin t)^{n-1}dt.
\ee
Therefore
\be
 \mu (\delta_1)> \mu(\delta_2)+\frac{f_2(\delta_1)}{\int_0^{\delta_1}( \sin t)^{n-1}dt}.
\ee
Now let $\delta_1=\delta \in (0, \frac{\pi}{2})$ and $\delta_2=\pi/2$.
Applying the fact that $ \mu (\frac{\pi}{2} ) = n $, $J_2 = \sin t $, and
$$
f_2= (\sin t)^{n-2} \cos t ,
$$
we have
\be
\begin{split}
\mu (\delta)> & \ n+\frac{ (\sin \delta)^{n-2} \cos \delta}{\int_0^{\delta}( \sin t)^{n-1}dt}\\
>   & \ n+\frac{(\sin \delta)^{n-2}  \cos^2 \delta}{ \int_0^{\delta}\cos t( \sin t)^{n-1}dt}\\
=  & \ \frac{n}{\sin^2\delta} .
\end{split}
\ee
Therefore, (ii) is proved.
\end{proof}

\section{A Volume estimate on domains in $\mathbb{R}^n$}\label{volume-Rn}
On $ \R^n$, the standard   Euclidean  metric
$ \bg $ satisfies $ DR^*_{\bg} ( \l ) = \bg $ with
\be
 \l (x) =   - \frac{1}{2 (n-1)}  | x - a |^2 + L
  \ee
 where $ | \cdot |  $ denotes the Euclidean length,
  $ a \in \R^n $ is any fixed point and $ L$ is an  arbitrary constant. In this section,
we  use this fact and  Corollary \ref{cor-main-formula-V-critical}
   to prove Theorem \ref{thm-intro-E-volume} in the introduction.
 First we need some lemmas.

\begin{lma} \label{lma-extension}
On a compact Riemannian manifold $(\Omega, \bg)$ with
smooth boundary $ \Sigma$, there exists a positive constant $ C$ depending only
on $(\Omega, \bg)$ such that,  for any Lipschitz function $\phi$ on $\Sigma$, there is an extension of $\phi$ to a Lipschitz function $\wt \phi$  on $\Omega$ such  that
\be \label{eq-extension}
\int_\Omega \lf(|\wt \phi|^2+|\bnabla\wt \phi|^2\ri) \vbg \le C\int_\Sigma \lf(\phi^2+|\bnabla^\Sigma\phi|^2\ri) \vsg
\ee
where $ \bnabla $, $ \bnabla^\Sigma$ denote the gradient on $ \Omega$, $ \Sigma$
respectively.
\end{lma}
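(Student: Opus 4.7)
The plan is to construct $\wt\phi$ explicitly by a cut-off-times-boundary-value recipe in boundary-normal (Fermi) coordinates; the only thing to check is that the resulting estimate \eqref{eq-extension} holds with a constant depending solely on $(\Omega,\bg)$.

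First I would pick $\delta_0>0$ small enough that the inward normal exponential map $\Phi:\Sigma\times[0,\delta_0)\to\Omega$, $\Phi(x,t)=\exp^{\bg}_x(-t\bnu)$, is a diffeomorphism onto a one-sided tubular neighbourhood $U$ of $\Sigma$. In these coordinates $\bg=dt^2+\gamma(t)$ for a smooth $1$-parameter family $\gamma(t)$ of Riemannian metrics on $\Sigma$ with $\gamma(0)=\bar\gamma$, and compactness of $\Sigma\times[0,\delta_0/2]$ gives constants $c_1,c_2>0$, depending only on $(\Omega,\bg)$, with
$$
c_1\,\bar\gamma\le\gamma(t)\le c_2\,\bar\gamma,\qquad c_1\,\vsg\le d\sigma_{\gamma(t)}\le c_2\,\vsg,
$$
uniformly in $t\in[0,\delta_0/2]$. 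I would then fix a cut-off $\chi\in C^\infty([0,\infty))$ with $\chi(0)=1$, $\chi\equiv 0$ on $[\delta_0/2,\infty)$, and $|\chi|,|\chi'|\le M$, and define
$$
\wt\phi(y)=\begin{cases}\chi(t)\,\phi(x), & y=\Phi(x,t)\in U,\\ 0, & y\in\Omega\setminus U.\end{cases}
$$
Because $\chi$ vanishes for $t\ge\delta_0/2$, the two pieces glue together and $\wt\phi$ is Lipschitz on $\Omega$ whenever $\phi$ is Lipschitz on $\Sigma$.

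To verify \eqref{eq-extension}, in Fermi coordinates I would use the orthogonal decomposition
$$
|\bnabla\wt\phi|_{\bg}^2=\chi'(t)^2\,\phi(x)^2+\chi(t)^2\,|\nabla^{\gamma(t)}\phi(x)|^2_{\gamma(t)},
$$
together with $\vbg = d\sigma_{\gamma(t)}\,dt$, to obtain
$$
\int_U\!\bigl(|\wt\phi|^2+|\bnabla\wt\phi|^2\bigr)\,\vbg=\int_0^{\delta_0/2}\!\!\int_\Sigma\!\bigl(\chi^2\phi^2+(\chi')^2\phi^2+\chi^2|\nabla^{\gamma(t)}\phi|^2_{\gamma(t)}\bigr)\,d\sigma_{\gamma(t)}\,dt.
$$
Applying the uniform comparisons between $\gamma(t)$ and $\bar\gamma$ on $[0,\delta_0/2]$ and the $L^\infty$ bounds on $\chi,\chi'$, the right-hand side is bounded by a geometric constant times $\int_\Sigma(\phi^2+|\bnabla^\Sigma\phi|^2)\,\vsg$, and since $\wt\phi\equiv 0$ on $\Omega\setminus U$ this gives \eqref{eq-extension}.

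There is no substantive obstacle here: this is really the standard bounded right inverse of the trace operator from $H^1(\Omega)$ into $H^{1/2}(\Sigma)$, specialised to the easier input space $H^1(\Sigma)$. The only point requiring genuine care is ensuring that every constant produced depends solely on $(\Omega,\bg)$ and not on $\phi$, which is guaranteed by compactness of $\bar\Omega$ together with the smooth dependence of $t\mapsto\gamma(t)$ on the compact interval $[0,\delta_0/2]$.
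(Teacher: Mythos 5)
Your proposal is correct and follows essentially the same route as the paper: a one-sided Fermi/normal collar in which $\bg=dt^2+\gamma(t)$, the extension $\wt\phi=\chi(t)\phi(x)$ by a cut-off supported in the collar, and the estimate via uniform equivalence of $\gamma(t)$ with $\bar\gamma$ on the compact collar. The only cosmetic difference is that you use the exact block-diagonal splitting of $|\bnabla\wt\phi|^2$ where the paper simply bounds it by $2\left(|\bnabla\rho|^2\phi^2+\rho^2|\bnabla\phi|^2\right)$; both yield the same constant depending only on $(\Omega,\bg)$.
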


\begin{proof} Let $ d (\cdot, \Sigma)$ be the distance to $ \Sigma$.
Let $\delta>0$ be a small constant such that the tubular neighborhood
$U_{2\delta}=\{x\in \Omega|\ d(x,\S)<2\delta\} $
can be parametrized by    $F:\S\times [0,2\delta)\to U_{2 \delta}$, with
$F(y, t) = exp_y ( t \nu(y))$ where $ \exp_y (\cdot)$ is the exponential map
at $ y \in \Sigma $ and $ \nu(y) $ is the inward unit normal at $ y$.
In $ U_{2 \delta}$, the metric $ \bg $ takes the form $ dt^2+\sigma^t$, where
$ \{ \sigma^t \}_{ 0\le t < 2\delta} $ is a family of metrics on $\S$.
By choosing $ \delta $ sufficiently small, one can assume
$ \sigma^t $ is equivalent to  $ \sigma^0 $ in the sense that
$ \frac12 \le \sigma^t (v,v) \le 2 $ for any tangent vector $ v $ with $ \sigma^0(v, v) = 1$,
 $ \forall \ 0 \le t < 2\delta$.

Let $ \rho = \rho (t) $ be  a fixed smooth  cut-off function  on $[0, \infty)$ such that
$ 0 \le \rho \le 1 $, $ \rho (t ) = 1 $ for $ 0 \le t \le \delta$ and
$ \rho (t) = 0 $ for $ t \ge \frac32 \delta$.
 On $ U_{2 \delta}$, consider the function $ \wt{\phi}(y, t) = \phi(y) \rho (t) $.
 Since $ \wt{\phi} $ is identically zero outside
 $ U_{\frac32 \delta} = \{x\in \Omega|\ d(x,\S)<\frac32\delta\} $,
 $ \tilde{\phi} $ can be viewed as an extension
 of $ \phi $ on $ \Omega$. For such an $ \wt{\phi}$, one has
\be \label{eq-exten-1}
\begin{split}
\int_\Omega |\wt\phi|^2 \vbg \le  \int_0^{2\delta} \lf( \int_{\Sigma}|\phi|^2 d\sigma^t \ri) dt
\le  \ C\delta\int_\S|\phi|^2 \vsg
\end{split}
\ee
and
\be \label{eq-exten-2}
\begin{split}
\int_\Omega |\bnabla\wt\phi|^2 \vbg \le &
\ 2\int_{U_{2\delta}} \lf( |\bnabla \rho|^2\phi^2+|\bnabla\phi|^2 \rho^2 \ri) \vbg \\
\le & \ C\delta\int_\S|\phi|^2 \vsg +
2\int_0^{2\delta} \lf( \int_{\Sigma}|\bnabla^{\Sigma}_t\phi|^2 d\sigma^t \ri) dt  \\
\le & \  C\lf[\int_\S|\phi|^2 \vsg +\int_\S|\bnabla^\Sigma \phi|^2 \vsg \ri]
\end{split}
\ee
where $ \bnabla^\Sigma_t $ denotes the gradient on $(\Sigma,   \sigma^t)$
and $ C$ is a positive constant depending only on $(\Omega, \bg)$.
\eqref{eq-extension} now follows from \eqref{eq-exten-1} and \eqref{eq-exten-2}.
\end{proof}

\begin{lma} \label{lma-estofh3}
On  a compact Riemannian manifold $(\Omega, \bg)$ with
smooth boundary $ \Sigma$, there exists a positive constant
$ C$ depending only on $ (\Omega, \bg)$ such that,
for any smooth $(0,2)$ symmetric tensor $ h$ on $ \Omega$, one has
\be \label{eq-estofh3}
\int_\Omega | h|^3 \vbg \le
C \lf( \int_\Sigma |h|^3 \vsg + || h ||_{C^2(\Omega)} \int_\Sigma |h|^2 \vsg
+ \int_\Omega | h | | \bnabla h |^2 \vbg \ri)
\ee
\end{lma}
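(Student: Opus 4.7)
The plan is to establish \eqref{eq-estofh3} by applying the divergence theorem to a smooth regularization of $|h|^3$ and then absorbing a mixed term via Young's inequality. In fact, I expect to prove the stronger bound $\int_\Omega |h|^3 \vbg \le C \lf( \int_\Sigma |h|^3 \vsg + \int_\Omega |h| |\bnabla h|^2 \vbg \ri)$, which implies \eqref{eq-estofh3} immediately since the missing middle term $\|h\|_{C^2(\Omega)} \int_\Sigma |h|^2 \vsg$ is nonnegative. The key ingredient is a fixed smooth vector field $X$ on $\bar\Omega$ with $\div_{\bg} X \equiv 1$ on $\Omega$ and $|X| \le C_1$ on $\bar\Omega$, where $C_1$ depends only on $(\Omega, \bg)$; such an $X$ can be constructed as $\bnabla f$, where $f$ solves a Neumann problem $\Delta_{\bg} f = 1$ with constant Neumann data chosen to satisfy the solvability condition.

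Because $|h|^3$ is not smooth at zeros of $h$, I would regularize by setting $u_\e = (|h|^2 + \e^2)^{3/2}$, which is smooth on $\bar\Omega$. The identity $\bnabla_k |h|^2 = 2 h^{ij} \bnabla_k h_{ij}$ yields the pointwise bound $|\bnabla u_\e| \le 3 (|h|^2 + \e^2)^{1/2} |h| |\bnabla h|$. Applying the divergence theorem to the smooth vector field $u_\e X$ gives
\[
\int_\Omega u_\e \vbg = \int_\Sigma u_\e \la X, \bnu \ra \vsg - \int_\Omega \la X, \bnabla u_\e \ra \vbg,
\]
and letting $\e \to 0^+$ via dominated convergence (everything is uniformly bounded on the compact set $\bar\Omega$) produces
\[
\int_\Omega |h|^3 \vbg \le C_1 \int_\Sigma |h|^3 \vsg + 3 C_1 \int_\Omega |h|^2 |\bnabla h| \vbg.
\]

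Finally, factoring $|h|^2 |\bnabla h| = |h|^{3/2} \cdot |h|^{1/2} |\bnabla h|$ and applying Young's inequality gives, for any $\delta > 0$, the pointwise bound $|h|^2 |\bnabla h| \le \tfrac{\delta}{2} |h|^3 + \tfrac{1}{2\delta} |h| |\bnabla h|^2$. Inserting this into the previous display and choosing $\delta = 1/(3 C_1)$, the $|h|^3$ term on the right is absorbed into the left-hand side, yielding the stronger bound and hence \eqref{eq-estofh3} with a constant $C$ depending only on $(\Omega, \bg)$. No serious obstacle arises; the only technical subtlety is the $\e$-regularization, needed because the map $h \mapsto |h|$ is not $C^1$ at the zero tensor.
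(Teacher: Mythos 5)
Your argument is correct, and it proves the lemma by a genuinely different route than the paper. The paper's proof sets $\phi = |h|^{3/2}$, extends $\phi|_\Sigma$ to a Lipschitz function on $\Omega$ via a tubular-neighborhood cutoff (Lemma \ref{lma-extension}), invokes a Poincar\'e inequality through the first Dirichlet eigenvalue to bound $\int_\Omega \phi^2$, and then controls the tangential boundary term $\int_\Sigma |\bnabla^\Sigma \phi|^2$ by integrating $\Delta_\Sigma(|h|^2+\e)^{3/4}$ by parts on $\Sigma$ --- it is exactly this last step that produces the middle term $\|h\|_{C^2(\Omega)}\int_\Sigma |h|^2\,\vsg$ in \eqref{eq-estofh3}. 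You instead fix a smooth vector field $X=\bnabla f$ with $\div_{\bg}X\equiv 1$ (the Neumann problem $\Delta_{\bg}f=1$, $\partial_{\bnu}f = V(\bg)/\mathrm{Area}_{\bg}(\Sigma)$ is solvable and yields $f$ smooth up to the boundary, so $|X|\le C_1$ with $C_1$ depending only on $(\Omega,\bg)$), apply the divergence theorem to $u_\e X$ with $u_\e=(|h|^2+\e^2)^{3/2}$, pass to the limit, and absorb the mixed term $\int_\Omega |h|^2|\bnabla h|$ by Young's inequality; the absorption is legitimate since $\int_\Omega|h|^3<\infty$. The computation checks out (the bound $|\bnabla u_\e|\le 3(|h|^2+\e^2)^{1/2}|h||\bnabla h|$ follows from $|\bnabla|h|^2|\le 2|h||\bnabla h|$), and you obtain the stronger estimate $\int_\Omega|h|^3\,\vbg \le C\bigl(\int_\Sigma|h|^3\,\vsg + \int_\Omega|h||\bnabla h|^2\,\vbg\bigr)$, which trivially implies \eqref{eq-estofh3} and requires only $C^1$ control of $h$ rather than the second derivatives used on the boundary in the paper's argument. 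Your approach is both shorter and sharper; the paper's extension-lemma route has the side benefit of producing Lemma \ref{lma-extension}, a trace-type statement of independent use, but it is not needed for the present estimate.
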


\begin{proof} On $ \Omega$, let $ \phi =|h|^\frac32$. By lemma \ref{lma-extension},
there exists  a Lipschitz function $ \wt \phi $ on $ \Omega$ such that
 $ \wt \phi |_{\Sigma} = \phi |_{\Sigma} $ and
$$
 \int_\Omega \lf(|\wt \phi|^2+|\bnabla\wt \phi|^2\ri) \vbg \le C\int_\Sigma \lf(\phi^2+|\bnabla^\Sigma\phi|^2\ri) \vsg .
 $$
Let $ \lambda_1 > 0 $ be the first Dirichlet eigenvalue of $ (\Omega, \bg)$, then
\be \label{eq-usingexten}
\begin{split}
 \  \int_\Omega\phi^2 \ \vbg
\le & \ 2  \int_\Omega \lf[ \wt\phi^2 + (\phi-\wt\phi)^2 \ri] \vbg \\
\le & \ 2  \int_\Omega  \wt\phi^2 \  \vbg + 2 \lambda_1^{-1} \int_\Omega | \bnabla (\phi-\wt\phi) |^2 \vbg \\
\le& \ C\lf[\int_\S \lf(\phi^2+|\bnabla^\Sigma \phi|^2\ri) \vsg +\int_\Omega|\bnabla \phi|^2 \vbg \ri]
\end{split}
\ee
where
\be \label{eq-goodint}
\begin{split}
\int_\Omega | \bnabla \phi |^2 \vbg = \int_\Omega | \bnabla |h|^\frac32 |^2 \vbg
\le \frac94 \int_\Omega | h| | \bnabla h |^2 \vbg .
\end{split}
\ee

To handle the boundary term $ \int_\Sigma  | \bnabla^\Sigma \phi |^2 \vsg $,
given any constant $ \ep > 0$, one considers
\be \label{eq-intbypart}
 \int_\Sigma   | \bnabla^\Sigma  ( |h|^2 + \ep )^\frac34  |^2  \vsg
= - \int_\Sigma  ( |h|^2 + \ep )^\frac34 \Delta_\Sigma   ( |h|^2 + \ep )^\frac34\vsg
\ee
where $ \Delta_\Sigma $ denotes the Laplacian on $ \Sigma$.
Let $ \{ e_\alpha \ | \ \alpha = 1, \ldots, n-1 \}$ be a local orthonormal
frame on $ \Sigma$ and $ e_n $ be the outward unit normal to $ \Sigma$.
Let $ \bar{H} $ be the mean curvature of $ \Sigma$ with respect to $ e_n$.
Denote covariant differentiation $ \Omega$ by `` $;$  ".
Let $ i$, $j$ run through $\{1, \ldots, n\}$.
One has
\be
\begin{split}
\Delta_\Sigma |h|^2 = & \sum_{\alpha} ( |h|^2)_{;\alpha \alpha} - \bH ( | h|^2)_{;n}  \\
= &  \sum_{\alpha, i, j, } 2 (  h_{ij} h_{ij;\alpha \alpha}  + h_{ij;\alpha }^2 )  - \bH
\sum_{i,j} 2 h_{ij} h_{ij;n} \\
\ge & - C || h ||_{C^2(\bar{\Omega})} | h|.
\end{split}
\ee
 Therefore,
\be \label{eq-bdDelta}
\begin{split}
\Delta_\Sigma (  |h|^2 + \ep )^\frac34 = &
\frac34 ( | h|^2 + \ep )^{- \frac14} \Delta_\Sigma | h |^2
- \frac{3}{16} ( | h |^2 + \ep )^{- \frac54} | \bnabla^\Sigma | h|^2 |^2 \\
\ge & - C || h ||_{C^2(\bar{\Omega})}   ( | h|^2 + \ep )^{- \frac14} |h|
- \frac{3}{16} ( | h |^2 + \ep )^{- \frac54} | \bnabla^\Sigma | h|^2 |^2.
\end{split}
\ee
It follows from \eqref{eq-intbypart} and \eqref{eq-bdDelta} that
 \be
 \begin{split}
 \int_\Sigma   | \bnabla^\Sigma  ( |h|^2 + \ep )^\frac34  |^2  \vsg
\le & \ C || h ||_{C^2(\bar{\Omega})} \int_\Sigma ( | h |^2 + \ep)^{  \frac12} | h | \vsg \\
&  +  \frac{1}{3} \int_\Sigma | \bnabla^\Sigma ( | h|^2 + \ep )^\frac34 |^2
 \vsg .
\end{split}
 \ee
 Letting $ \ep \rightarrow 0$, one has
  \be \label{eq-bdrygoodint}
 \int_\Sigma   | \bnabla^\Sigma  |h|^\frac32  |^2  \vsg
\le  C || h ||_{C^2(\bar{\Omega})} \int_\Sigma | h |^2  \vsg .
 \ee
 \eqref{eq-estofh3} now follows from \eqref{eq-usingexten},
 \eqref{eq-goodint} and \eqref{eq-bdrygoodint}.
\end{proof}

We recall the statement of Theorem \ref{thm-intro-E-volume} and give its proof.

\begin{thm} \label{thm-intro-E-volume-later}
Let $ \Omega \subset \R^n $ be a bounded domain with smooth boundary $ \Sigma$.
Suppose $ \Pi + \bH \bar{\gamma} > 0$ (i.e. $  \Pi + \bH \bar{\gamma} $ is positive definite), where
$ \Pi $, $ \bH $ are the second fundamental form, the mean curvature of $ \Sigma$ in $ \R^n$
and  $ \bar{\gamma} $ is the metric on $ \Sigma$ induced from the Euclidean metric $ \bg$.
Let $ g $ be another metric on $ \bar{\Omega}$ satisfying
\begin{itemize}
\item $ g $ and $ \bg $ induce the same metric on $ \Sigma$.
\item $ H(g) \ge \bar{H}$, where $ H(g) $ is the mean curvature of $ \Sigma $ in $(\Omega, g)$.
\end{itemize}
Given any point $ a \in \R^n$,  there exists a constant
$ \Lambda > \frac{ \max_{q \in \bar{\Omega}}  {| q - a |^2} }{4(n-1)}$,
which depends only on   $\Omega$ and $a $,
such that
if $ || g - \bg ||_{C^3(\bar{\Omega})} $ is sufficiently small, then
\begin{equation} \label{eq-main-formula-V-critical-app-5-later}
    V(g) - V(\bg)   \ge    \int_\Omega R(g)   \Phi \  \vbg
 \end{equation}
 where $ \Phi  =   - \frac{1}{4 (n-1)} |x - a |^2 + \Lambda  > 0
 $ on $ \bar{\Omega}$.
\end{thm}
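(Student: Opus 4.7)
\emph{Plan.} My strategy is to apply Corollary~\ref{cor-main-formula-V-critical} with the weight $\lambda = 2\Phi$ on the Euclidean background. As in the opening of the proof of Theorem~\ref{thm-VRH-rigidity}, the gauge-fixing result \cite[Proposition~11]{BrendleMarques} produces a diffeomorphism $\varphi$ with $\varphi|_\Sigma = \mathrm{id}$ such that $h := \varphi^* g - \bg$ satisfies $\mathrm{div}_{\bg} h = 0$; this operation preserves all the hypotheses and the desired conclusion, and $\|h\|_{C^2(\bar\Omega)}$ is controlled by $\|g - \bg\|_{C^3(\bar\Omega)}$ via standard elliptic regularity and Sobolev embedding. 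Since $\bg$ is flat, the function $\lambda := -\frac{1}{2(n-1)}|x-a|^2 + 2\Lambda = 2\Phi$ verifies $DR^*_{\bg}(\lambda) = \bg$, so Corollary~\ref{cor-main-formula-V-critical} applies. Using $R(\bg) = 0$ and $\int_\Omega R(g)\Phi\,\vbg = \tfrac12\int_\Omega R(g)\lambda\,\vbg$, a direct rearrangement of \eqref{eq-main-formula-V-critical} shows that the desired inequality \eqref{eq-main-formula-V-critical-app-5-later} is equivalent to
\[
\int_\Sigma (2 - \th)\bigl[H(g) - \bH\bigr]\lambda\,\vsg \;\ge\; \mathcal R,
\]
where $\mathcal R$ denotes the right-hand side of \eqref{eq-main-formula-V-critical}. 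The hypothesis $H(g) \ge \bH$ together with $\lambda > 0$ on $\bar\Omega$ and $2 - \th > 0$ for small $\|h\|_{C^0}$ makes the boundary integral nonnegative, so it is enough to prove $\mathcal R \le 0$.

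Because $\bg$ is Euclidean, $\Ric(\bg) = 0$ and $\bar R_{ikjl} = 0$ annihilate all the curvature contributions in $\mathcal R$; by Remark~\ref{rem-BM-R} the interior error $E(h)$ further loses its $|h|^3$ part and is bounded by $C|h||\bD h|^2$. The remaining interior quadratic integrands $(-\tfrac14 - \tfrac{1}{n-1})(\th)^2$ and $-\tfrac14(|\bD h|^2 + |\bnabla(\th)|^2)\lambda$ are pointwise nonpositive. The boundary quadratic terms in $(h_{nn}, X)$ collect into
\[
\mathcal Q = -\bigl[\lambda_{;n} + \tfrac14\bH\lambda\bigr](h_{nn})^2 - h_{nn}\bigl\langle X,\bnabla^\Sigma\lambda\bigr\rangle - \tfrac12\bigl[\lambda_{;n} + \bH\lambda\bigr]|X|^2 - \tfrac12 \lambda\,\Pi(X,X).
\]
The assumption $\Pi + \bH\bar\gamma > 0$ on compact $\Sigma$ yields a constant $c_0 > 0$ with $\Pi(v,v) + \bH|v|^2 \ge c_0|v|^2$ (so in particular $\bH$ is bounded below by a positive constant). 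Since $|\lambda_{;n}|$ and $|\bnabla^\Sigma \lambda|$ are uniformly bounded by $\tfrac{1}{n-1}\max_{\bar\Omega}|x - a|$ while $\lambda \ge 2\Lambda - \tfrac{1}{2(n-1)}\max_{\bar\Omega}|x-a|^2$, an AM--GM absorption of the cross term $h_{nn}\langle X,\bnabla^\Sigma\lambda\rangle$ into the $(h_{nn})^2$ and $|X|^2$ coefficients shows that there exists a threshold $\Lambda_0 = \Lambda_0(\Omega,a) > \tfrac{\max_{\bar\Omega}|x-a|^2}{4(n-1)}$ such that $\mathcal Q \le 0$ pointwise whenever $\Lambda \ge \Lambda_0$; this choice of $\Lambda$ automatically yields $\Phi > 0$ on $\bar\Omega$.

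Finally, I absorb the lower-order error integrals $\int_\Omega(G(h) + E(h)\lambda + Z^i(h)\bD_i\lambda)\,\vbg + \int_\Sigma F(h)\lambda\,\vsg$. The interior terms of type $|h||\bD h|^2$ and $|h|^2|\bD h|$ are bounded by $\|h\|_{C^0}\int_\Omega|\bD h|^2\lambda\,\vbg$, which is dominated by $-\tfrac14\int_\Omega|\bD h|^2\lambda\,\vbg$; the pointwise $|h|^3$ pieces of $G$ and $F$ are dispatched using Lemma~\ref{lma-estofh3} combined with the identity $|h|^2 = (h_{nn})^2 + 2|X|^2$ on $\Sigma$ (coming from $h|_{T\Sigma}=0$), which reduces them to $\|h\|_{C^2}$ times quantities already controlled by $\mathcal Q$ or by the interior Dirichlet integral. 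For $\|h\|_{C^2}$ (hence $\|g-\bg\|_{C^3}$) sufficiently small, every error is strictly dominated, giving $\mathcal R \le 0$. The main obstacle is the algebraic step of choosing $\Lambda$: one must simultaneously keep $\Phi > 0$, render $\mathcal Q$ pointwise negative semidefinite, and leave enough strict negativity in the coefficients of $(h_{nn})^2$ and $|X|^2$ to absorb both the AM--GM cross term and the $|h|^3$ boundary contributions produced by Lemma~\ref{lma-estofh3}.
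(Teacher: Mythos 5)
Your proposal is correct and follows essentially the same route as the paper's proof: gauge-fix $h$ to be divergence-free via \cite[Proposition 11]{BrendleMarques}, apply Corollary \ref{cor-main-formula-V-critical} with the quadratic function satisfying $DR^*_{\bg}(\l)=\bg$, choose $\Lambda$ (the paper's $L$) large so that the boundary quadratic form in $(h_{nn},X)$ is strictly negative definite using $\Pi+\bH\bar\gamma>0$, and dispatch the interior cubic terms (notably the $|h|^3$ remainder from the volume expansion) via Lemma \ref{lma-estofh3} before absorbing everything for $\|h\|_{C^2}$ small. The only small caution is that $|h|^2|\bD h|$ is not pointwise dominated by $\|h\|_{C^0}|\bD h|^2$; one should first split it as $|h|^2|\bD h|\le\tfrac12(|h|^3+|h||\bD h|^2)$ and treat the cubic piece with Lemma \ref{lma-estofh3}, exactly as the paper implicitly does, and one needs the strict (not merely semidefinite) negativity of the boundary form that you note at the end.
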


\begin{proof}
Fix a number $ p > n $.
By the proof of   \cite[ Proposition 11]{BrendleMarques},
one knows if $||g-\bg ||_{W^{3,p}(\Omega)}$ is sufficiently small,
then  there exists a $W^{4,p}$ diffeomorphism $\varphi : \Omega \rightarrow \Omega $
such that  $\varphi|_{\Sigma}=\text{\rm id}$, $ h=\varphi^*(g)-\bg $
is divergence free with respect to $ \bg$, and
$ || h ||_{W^{3, p}(\Omega)} \le N || g - \bg ||_{W^{3, p}(\Omega)} $
for a positive constant $ N $ depending only on $ (\Omega, \bg) $.
In what follows, we will work with  $ \phi^*(g)$.
For convenience, we still denote $ \phi^*(g) $ by $ g$.

Given $ a  \in \R^n$, consider
$
\l (x) =   - \frac{1}{2 (n-1)} |x - a |^2 + L
$
where $ L $ is a constant to be determined.
First, we require
$
L > \frac{1}{2(n-1)} \max_{q \in \bar{\Omega}}  {| q - a |^2}
$
so that $ \l > 0 $ on $ \bar{\Omega}$. Since
$ \l $ satisfies  $ DR_{\bg}^* ( \l) = \bg $,
Corollary \ref{cor-main-formula-V-critical} shows
\begin{equation} \label{eq-main-formula-V-critical-app}
 \begin{split}
& \ -2 ( V(g) - V(\bg) ) +   \int_\Omega R(g) \l \  \vbg
+  \int_\Sigma ( 2 -  \tr_{\bg} h ) \lf[ H (g) - \bH \ri] \l \ \vsg \\
  \le  &   - \int_\Omega    \frac14  | \bD h |^2  \l  \ \vbg
   + \int_\Sigma \lf[ - \frac14 (h_{nn})^2 \bH  - \frac12 ( \Pi (X, X) + \bH  |X|^2 ) \ri] \l \ \vsg  \\
& \ + \int_\Sigma \l_{;n} \lf[  - (h_{nn})^2 - \frac12 | X|^2 \ri] \ \vsg
+  \int_\Sigma (-1) h_{nn} \la X, \bnabla^\Sigma \l \ra   \ \vsg \\
& \  + \int_\Omega G(h) \ \vbg +  \int_\Omega E(h) \l  \ \vbg + \int_\Omega Z^i (h) \bD_i \l \ \vbg  + \int_\Sigma F(h) \l \ \vsg
 \end{split}
 \end{equation}
where
$ | G (h) | \le C | h |^3$, $  | E(h) | \le C ( | h| | \bD h|^2 + | h |^3 ) ,$
$ | Z  (h)   | \le C |h|^2 | \bD h| $, $ | F(h) | \le C ( | h|^2 | \bD h| + | h |^3 ) $
for some constant $ C$ depending only on $\Omega$.

At  $ \Sigma$,  $  \l_{;n}$ and $\bD^\Sigma \l $
are determined solely by $ \Omega$ and $a$ (in particular they are independent on $L$).
Apply the assumption $ \Pi + \bar{H} \bar{\gamma} > 0$ (which  implies $\bH >0$)
and the fact $ | h |^2 = ( h_{nn} )^2 + 2 | X |^2$,
we have
\be \label{eq-bdry-indepenL}
\begin{split}
& \lf[ - \frac14 (h_{nn})^2 \bH   - \frac12 ( \Pi (X, X) + \bH  |X|^2 ) \ri] \l  \\
& +
\l_{;n} \lf[  - (h_{nn})^2 - \frac12 | X|^2 \ri]
+   (-1) h_{nn} \la X, \bnabla^\Sigma \l \ra \\
\le & - L C_1 | h |^2   + C_2 | h |^2
\end{split}
\ee
where $ C_1 $, $ C_2$  are  positive constants depending only on $ \Omega$ and $a$.
We fix $ L  $  such that
\be \label{eq-conditionL}
L C_1   - C_2  > 0
\ee
and let $ m = \frac14 \min_{\bar{\Omega}} \l  $ (note that $\l$ is fixed now).
 \eqref{eq-main-formula-V-critical-app}-\eqref{eq-conditionL} imply
\begin{equation} \label{eq-main-formula-V-critical-app-2}
 \begin{split}
& \ -2 ( V(g) - V(\bg) ) +   \int_\Omega R(g)  \l \  \vbg
+  \int_\Sigma ( 2 -  \tr_{\bg} h ) \lf[ H (g) - \bH  \ri] \l \ \vsg \\
  \le  & \ -  m\int_\Omega   | \bD h |^2   \ \vbg  - \lf( L C_1   - C_2  \ri) \int_\Sigma | h |^2 \vsg \\
& \   + C_3 \lf( \int_\Omega ( | h| | \bD h|^2 + | h |^3 )  \vbg +
   \int_\Sigma  ( | h|^2 | \bD h| + | h |^3 ) \ \vsg \ri)
 \end{split}
 \end{equation}
where $ C_3$ depends only on $ \Omega$, $a$ and $L$.
Apply Lemma \ref{lma-estofh3} to the term $ \int_\Omega | h |^3 \ \vbg$
on the right side of \eqref{eq-main-formula-V-critical-app-2}, we  have
\begin{equation*} \label{eq-main-formula-V-critical-app-4}
 \begin{split}
& \ -2 ( V(g) - V(\bg) ) +   \int_\Omega R(g)  \l \  \vbg
+  \int_\Sigma ( 2 -  \tr_{\bg} h ) \lf[ H (g) - \bH  \ri] \l \ \vsg \\
  \le  & \ -  m\int_\Omega   | \bD h |^2   \ \vbg  -  ( L C_1   - C_2 ) \int_\Sigma | h |^2 \vsg  \\
  & \  + C || h ||_{C^2(\bar{\Omega})} \lf( \int_\Omega   | \bD h|^2    \vbg +
   \int_\Sigma  | h|^2    \ \vsg \ri) .
 \end{split}
 \end{equation*}
where $ C$ is independent on $h$.
From this, we conclude that if $ || h ||_{C^2 (\bar{\Omega} )  } $ is sufficiently
 small, then \eqref{eq-main-formula-V-critical-app-5-later} holds with $ \Phi = \frac12 \l $.
 This completes the proof.
\end{proof}

\begin{rem}
When $ \Omega \subset \R^n $ is a ball of radius $R$, one can take $a$ to be the center of $ \Omega$. In this
case, by computing $ \bH$, $ \Pi$ and $ \l_{;n}$ explicitly in \eqref{eq-conditionL},
  the constant $ L$  can be chosen to be any constant satisfying
$$
L >  \lf[ \frac{1}{2(n-1)} + \frac{4}{(n-1)^2} \ri] R^2 .
$$
\end{rem}

 \begin{rem}
 By the results in \cite{Miao02, ShiTam02}
based on the positive mass theorem \cite{SchoenYau79, Witten81},  a metric $g$
on $ \Omega$ satisfying the boundary conditions in  Theorem \ref{thm-intro-E-volume-later}
must be isometric to the Euclidean metric  if  $R(g) \ge 0 $.  Therefore, a nontrivial
 metric $ g$ in Theorem \ref{thm-intro-E-volume-later} necessarily has negative  scalar curvature
 somewhere. For such a $g$, Theorem \ref{thm-intro-E-volume-later} shows
  if the weighted   integral  $ \int_\Omega R(g) \Phi \ \vbg $
 is nonnegative, then $ V(g) \ge V(\bg)$.
 \end{rem}

\section{Other related results}  \label{rem-zero}

In this  section, we collect some other by-products  of the formulas derived in Section 2.
First, we discuss a scalar curvature rigidity result for  general domains in $\mathbb{S}^n$.

\begin{thm} \label{thm-BM-convexS}
Let  $ \Omega \subset \mathbb{S}^n $  be a smooth domain
contained in a geodesic ball $ B$ of radius  less than $  \frac{\pi}{2} $.
Let $ \bg $ be the  standard metric on $ \mathbb{S}^n$.
Let $ \Pi$, $ \bH$ be the second fundamental form, the mean curvature of $ \Sigma = \p \Omega$
in $(\Omega, \bg)$ with respect to the outward unit normal $ \bnu$.
Suppose   $ \Pi \ge -c \bar{\gamma}$, where $c\ge 0$ is a function on $\Sigma$
and $ \bar{\gamma}$ is the induced metric on $ \Sigma$.
Let $ q $ be the center of $ B$.
Suppose at $ \Sigma\setminus\{q\}$,
\be \label{eq-condition-convexS}
\bH-c \ge \left[ \frac{5 \cos \theta + \sqrt{ \cos^2 \theta + 8 }  }{2} \right] \tan{r} 
\ee
where $ r $ is the $\bg$-distance to $q$ and $ \theta $ is the angle between $ \bnu $ and $ \bnabla r$.
Then the conclusion of Theorem \ref{thm-BM} holds on $ \Omega$.
\end{thm}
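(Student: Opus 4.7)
The plan is to apply the master identity \eqref{eq-main-formula-1} to the static potential $\lambda = \cos r$, which satisfies $DR^*_{\bg}(\lambda) = 0$ on $(\mathbb{S}^n, \bg)$. Since $B$ has radius less than $\pi/2$ and $\Omega \subset B$, $\lambda > 0$ on $\bar\Omega$. By the Brendle--Marques diffeomorphism trick \cite[Proposition 11]{BrendleMarques}, I may first replace $g$ by $\varphi^*(g)$ for a suitable diffeomorphism $\varphi$ with $\varphi|_\Sigma = \text{\rm id}$, so that $h = \varphi^*(g) - \bg$ satisfies $\div_{\bg} h = 0$ and $h|_{T\Sigma} = 0$. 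The assumptions $R(g) \ge n(n-1) = R(\bg)$, $H(g) \ge \bH$, and $2 - \tr_{\bg} h > 0$ for small $h$ make the left-hand side of \eqref{eq-main-formula-1} non-negative. It then suffices to show the right-hand side is non-positive up to the cubic errors $E(h), Z(h), F(h)$, which are controlled by $\|h\|_{C^1}$ times quadratic integrals in $h$ and $\bD h$.

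For the interior integrand, $\bar R_{ikjl} = \bg_{ij}\bg_{kl} - \bg_{il}\bg_{kj}$ gives $\tfrac12 h^{ij} h^{kl} \bar R_{ikjl} = \tfrac12[(\tr_{\bg} h)^2 - |h|^2]$, while the static relation $\nabla^2_{\bg}\cos r = -\cos r\,\bg$ gives $(\tr_{\bg} h)\la h, \nabla^2_{\bg}\lambda\ra = -\cos r\,(\tr_{\bg} h)^2$. Combining these with $-\tfrac14(|\bD h|^2 + |\bnabla \tr_{\bg} h|^2)\lambda$ shows the interior quadratic integrand in \eqref{eq-main-formula-1} equals
\begin{equation*}
-\tfrac12 \cos r\,(\tr_{\bg} h)^2 - \tfrac12 \cos r\,|h|^2 - \tfrac14 \cos r\,|\bD h|^2 - \tfrac14 \cos r\,|\bnabla \tr_{\bg} h|^2,
\end{equation*}
which is manifestly non-positive and strictly negative in $|h|^2 + |\bD h|^2$.

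The heart of the proof is the boundary analysis. At each $p \in \Sigma \setminus \{q\}$, $\lambda = \cos r$, $\lambda_{;n} = -\sin r \cos\theta$, and $|\bnabla^\Sigma \lambda| \le \sin r\,|\sin\theta|$. Using $\Pi(X,X) \ge -c|X|^2$ and a weighted Cauchy--Schwarz, for any $\mu > 0$,
\begin{equation*}
-h_{nn}\la X, \bnabla^\Sigma \lambda\ra \le \tfrac12 \mu \sin r\,(h_{nn})^2 + \tfrac{\sin r\, \sin^2\theta}{2\mu}|X|^2.
\end{equation*}
After collecting all boundary contributions of \eqref{eq-main-formula-1} and dividing by $\cos r > 0$, the boundary integrand is dominated by a diagonal quadratic form in $(h_{nn}, |X|)$ that is non-positive provided
\begin{equation*}
\bH - c \ge (4\cos\theta + 2\mu)\tan r \quad \text{and} \quad \bH - c \ge \bigl(\cos\theta + \tfrac{\sin^2\theta}{\mu}\bigr)\tan r,
\end{equation*}
where in the first I used $c \ge 0$ to replace $\bH$ by $\bH - c$. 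Equating the two upper bounds gives $2\mu^2 + 3\cos\theta\,\mu - \sin^2\theta = 0$, whose positive root is $\mu = \tfrac14(-3\cos\theta + \sqrt{\cos^2\theta + 8})$; substituting back produces the sharp threshold $\tfrac{5\cos\theta + \sqrt{\cos^2\theta + 8}}{2}\tan r$, matching \eqref{eq-condition-convexS} exactly.

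Combining the interior and boundary estimates with the cubic error bounds and a trace inequality to control $\int_\Sigma |h|^2$ by $\|h\|_{W^{1,2}(\Omega)}^2$, I conclude that for $\|h\|_{C^2(\bar\Omega)}$ sufficiently small the right-hand side of \eqref{eq-main-formula-1} is bounded above by $-c_0 \int_\Omega(|h|^2 + |\bD h|^2)\,\vbg$ for some $c_0 > 0$, forcing $h \equiv 0$. The main obstacle is identifying the sharp weighted Cauchy--Schwarz in the boundary step that yields the precise coefficient in \eqref{eq-condition-convexS}; the optimization in $\mu$ above is what produces this somewhat opaque constant. The possible isolated point $q \in \Sigma$ (where $r = 0$ and $\theta$ is undefined) is harmless since the boundary integrand of \eqref{eq-main-formula-1} extends continuously to $\Sigma$ and the limit there depends only on $H(\bg)$.
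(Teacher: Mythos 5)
Your proposal is correct and follows essentially the same route as the paper: apply Theorem \ref{thm-main-formula-1} with the static potential $\l=\cos r>0$, keep the manifestly negative interior quadratic terms, and reduce the boundary contribution to the non-negativity of a quadratic form in $(h_{nn},|X|)$ under \eqref{eq-condition-convexS}. Your weighted Cauchy--Schwarz with the optimal $\mu$ is just the discriminant condition the paper verifies directly in \eqref{eq-conclusion-convexS}, so the two arguments coincide.
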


\begin{proof} As before,  replacing $ g $ by $ \varphi^* (g) $ for some
diffeomorphism $\varphi $, we may assume
$ \div_{\bg} h = 0 $ where $ h = g - \bg$.
On $\Omega$, let  $ \l = \cos r > 0$,  where $ r$ is the $ \bg$-distance to $ q$.
At   $ \Sigma \setminus \{ q \}$, we have
\be
 \l_{;n}  =   - \sin r \cos \theta, \ \
  | \bD^\Sigma \l | = \sin r  \sin \theta  .
\ee

 Apply Theorem \ref{thm-main-formula-1},  using the fact $ DR^*_{\bg} ( \l ) = 0 $ and the  assumptions
on $ R(g) $ and $ H(g)$, we have
\begin{equation} \label{eq-condition-convexS-2}
 \begin{split}
 & \  \int_\Omega \lf[   \frac14 ( | \bD h |^2 + | \bnabla ( \tr_{\bg} h) |^2 )
 + \frac12\lf(|h|^2+ (\tr_{\bg} h)^2\ri)  \ri]  \cos r
   \ \vbg \\
\le & \  \int_\Sigma \lf[ - \frac14 (h_{nn})^2  \bH  - \frac12 ( \Pi(X, X) + \bH  |X|^2 ) \ri] \cos r \ \vsg  \\
& \ + \int_{\Sigma \setminus \{ q \} } \lf[  (h_{nn})^2 + \frac12 | X|^2  \ri]  ( \sin{r}  \cos \theta )   \ \vsg
+ \int_{\Sigma \setminus \{ q \} }   | h_{nn}|  | X |  (\sin r  \sin \theta )   \ \vsg   \\
& \ +  C || h ||_{ C^1 ( \bar{ \Omega } ) }  \lf\{  \int_\Omega ( |  h |^2 + | \bD h|^2  ) \ \vbg
+ \int_\Sigma  |h|^2    \ \vsg  \ri\}\\
\le & \ - \int_{\Sigma \setminus \{ q \} } \bigg[  \lf( \frac14 ( \bH - c ) \cos r-\sin r\cos \theta\ri) (h_{nn})^2 + \frac12 \lf(  (\bH-c)\cos r-\sin r\cos\theta\ri)  |X|^2\\
  & \ -  | h_{nn}|  | X |  (\sin r  \sin \theta ) \bigg]  \ \vsg   \\
& \ +  C || h ||_{ C^1 ( \bar{ \Omega } ) }  \lf\{  \int_\Omega ( |  h |^2 + | \bD h|^2  ) \ \vbg
+ \int_\Sigma  |h|^2    \ \vsg  \ri\}
 \end{split}
 \end{equation}
 for some positive constant $ C$ independent on $ h$.

 Note that  the assumption \eqref{eq-condition-convexS} implies
\be \label{eq-condition-convexS-consequence-1}
 \frac14 (\bH-c) \cos r - (\sin r \cos \theta) \ge 0
\ee
and
\be \label{eq-condition-convexS-consequence-2}
(\bH-c) \cos r   -    (\sin r \cos \theta ) \ge 0.
\ee
 By \eqref{eq-condition-convexS}, \eqref{eq-condition-convexS-consequence-1}
and \eqref{eq-condition-convexS-consequence-2}, we have
\be \label{eq-conclusion-convexS}
\begin{split}
0 \le & \  \lf( \frac14 (\bH-c) \cos r - \sin r \cos \theta \ri) (h_{nn})^2  -  | h_{nn}|  | X | (\sin r \sin \theta )  \\
&  + \frac12 \lf(   (\bH-c) \cos r   -   \sin r \cos \theta  \ri)  |X|^2
\end{split}
\ee
 for any $ h_{nn} $ and $ X$.
The result now follows from  \eqref{eq-condition-convexS-2} and  \eqref{eq-conclusion-convexS}.
\end{proof}

\begin{rem}
It is clear from the proof of Theorem \ref{thm-BM-convexS} that the center $ q$ of $B$ does not 
need to be inside $ \Omega$.
\end{rem}

Theorem \ref{thm-BM-convexS}  directly implies   Theorem \ref{thm-BM-generalization} in the introduction.

\begin{proof}[Proof of Theorem \ref{thm-BM-generalization}] Choose $c=0$ in Theorem \ref{thm-BM-convexS}. Since
$$ \displaystyle 4 \ge  \frac{5 \cos \theta + \sqrt{ \cos^2 \theta + 8 }  }{2} $$
for any $\theta$,  the result follows from Theorem \ref{thm-BM-convexS}. 
\end{proof}

Next, we consider a corresponding scalar curvature rigidity result when the background metric $\bg$ is a flat metric.

\begin{thm}\label{thm-RH-zero}
Let $ \Omega $ be a compact manifold with smooth boundary $ \Sigma$.
Suppose $ \bg $ is a smooth Riemannian metric on $ \Omega$ such that
$ \bg $ has zero sectional curvature and
$ \Pi  + \bH \bar{\gamma} \ge 0 $ on $ \Sigma$,
 where
$ \Pi $, $ \bH$ are the second fundamental form, the mean curvature of $ \Sigma$,
and  $ \bar{\gamma} $ is the induced metric on $ \Sigma$.
Suppose $ g $ is another metric on $ \Omega$ satisfying
\begin{itemize}
\item  $R(g)\ge 0$ where $ R(g) $ is the scalar curvature of $ g$
\item $ g $ and $ \bg $ induce the same metric on $ \Sigma$
\item $H(g)\ge \bH$ where $H(g)$ is the mean curvature of $\Sigma$ in $(\Omega, g)$ .
\end{itemize}
If $ || g - \bg ||_{C^2 (\bar{\Omega} )} $ is sufficiently small, then
there is a diffeomorphism $ \varphi$ on $ \Omega$ with
$ \varphi |_{\Sigma} = \mathrm{id} $
such that $ \varphi^*(g ) = \bg$.
\end{thm}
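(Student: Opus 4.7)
The plan is to apply the master formula of Theorem \ref{thm-main-formula-1} with the constant test function $\l \equiv 1$, which exploits the fact that a flat background maximally simplifies every term that would otherwise involve the curvature of $\bg$ or derivatives of $\l$. First, as in the proofs of Theorem \ref{thm-VRH-rigidity} and Theorem \ref{thm-intro-E-volume-later}, I would invoke \cite[Proposition 11]{BrendleMarques} to replace $g$ by $\varphi^*g$ for some diffeomorphism $\varphi$ with $\varphi|_\Sigma = \mathrm{id}$, arranging that $h = g - \bg$ is divergence-free with respect to $\bg$ while keeping $\|h\|_{C^1(\bar\Omega)}$ small. It then suffices to show $h \equiv 0$ under this smallness assumption.

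With $\l \equiv 1$ and $\bg$ flat, every curvature tensor of $\bg$ vanishes, $DR^*_{\bg}(\l) = 0$, $\nabla^2_{\bg}\l = 0$, $\l_{;n} = 0$, $\bnabla^\Sigma \l = 0$, and $\bD \l = 0$, so formula \eqref{eq-main-formula-1} collapses to
\be\label{eq-proposal-main}
\begin{split}
&\int_\Omega R(g) \vbg + \int_\Sigma (2 - \tr_{\bg} h)[H(g) - \bH] \vsg + \tfrac14 \int_\Omega \lf(|\bD h|^2 + |\bnabla(\tr_{\bg} h)|^2\ri) \vbg\\
&\quad + \int_\Sigma \lf[\tfrac14 (h_{nn})^2 \bH + \tfrac12\lf(\Pi(X,X) + \bH |X|^2\ri)\ri] \vsg = \int_\Omega E(h) \vbg + \int_\Sigma F(h) \vsg,
\end{split}
\ee
where the errors satisfy $|E(h)| \le C(|h||\bD h|^2 + |h|^3)$ and $|F(h)| \le C(|h|^2|\bD h| + |h|^3)$. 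The hypotheses $R(g)\ge 0$ and $H(g)\ge\bH$ (together with $2 - \tr_{\bg} h > 0$ for small $h$) make the first two terms on the left-hand side nonnegative; tracing the assumption $\Pi + \bH\bar\gamma \ge 0$ yields $\bH \ge 0$ and the same assumption gives $\Pi(X,X) + \bH|X|^2 \ge 0$, so the boundary integrand is nonnegative pointwise. Every term on the left of \eqref{eq-proposal-main} is therefore nonnegative, and is bounded by the cubic right-hand side. Invoking the trace theorem $\|h\|_{L^2(\Sigma)}^2 \le C\|h\|_{W^{1,2}(\Omega)}^2$ and absorbing the resulting $\int_\Omega|\bD h|^2$ contribution for $\|h\|_{C^1}$ small, I obtain
\be\label{eq-proposal-keyest}
\int_\Omega |\bD h|^2 \vbg \ \le \ C \|h\|_{C^1(\bar\Omega)} \int_\Omega |h|^2 \vbg.
\ee

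The main obstacle is to upgrade \eqref{eq-proposal-keyest} to $h \equiv 0$, for which I would establish the Poincar\'e-type inequality
\be\label{eq-proposal-poincare}
\int_\Omega |h|^2 \vbg \le C_P \int_\Omega |\bD h|^2 \vbg
\ee
over the class of symmetric $(0,2)$ tensors $h$ with $\div_{\bg} h = 0$ and $h|_{T\Sigma} = 0$. Such an inequality follows from a standard Rellich compactness argument once one verifies that the only parallel symmetric $(0,2)$ tensor whose restriction to $T_p\Sigma$ is zero at every boundary point $p$ is the zero tensor. Because $\bg$ is flat, parallel tensors are determined by constant symmetric bilinear forms in a globally parallel frame, and since the tangent planes of the smooth compact hypersurface $\Sigma$ span $T\Omega$ as $p$ varies, this annihilation condition forces the form to vanish. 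Combining \eqref{eq-proposal-keyest} with \eqref{eq-proposal-poincare} gives $\int_\Omega |\bD h|^2 \le CC_P \|h\|_{C^1(\bar\Omega)}\int_\Omega |\bD h|^2$, so for $\|h\|_{C^1}$ sufficiently small one concludes $\bD h \equiv 0$; the same unique-continuation reasoning on the parallel boundary-vanishing tensor $h$ then yields $h \equiv 0$, which finishes the proof.
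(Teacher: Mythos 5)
You follow the same route as the paper up to the key inequality: gauge-fix via \cite[Proposition 11]{BrendleMarques} so that $h=g-\bg$ is $\bg$-divergence free, set $\l\equiv 1$ in Theorem \ref{thm-main-formula-1}, use $R(g)\ge 0$, $H(g)\ge\bH$ and $\Pi+\bH\bar{\gamma}\ge 0$ to give every main term a sign, and absorb the cubic errors (the paper also invokes Remark \ref{rem-BM-R} to delete the interior $|h|^3$ error, but your trace-theorem absorption reaches the same estimate $\int_\Omega|\bD h|^2\,\vbg\le C\,|| h ||_{C^1(\bar{\Omega})}\int_\Omega|h|^2\,\vbg$, so that part is fine). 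The genuine gap is your Poincar\'e-type inequality. You reduce it to the claim that the only $\bg$-parallel symmetric $(0,2)$-tensor vanishing on $T_p\Sigma$ for every $p\in\Sigma$ is the zero tensor, justified by asserting that flatness gives a globally parallel frame and that the tangent planes of $\Sigma$ span. In the stated generality $\Omega$ is an abstract compact flat manifold, not a domain in $\R^n$: there need not be a global parallel frame, and the tangent hyperplanes of $\Sigma$ need not vary at all. On $\Omega=T^{n-1}\times[0,1]$ with the flat product metric, $h=\epsilon\, dt^2$ is parallel, divergence free, and vanishes on $T\Sigma$, while $\Pi+\bH\bar{\gamma}\equiv 0$, so the boundary hypothesis holds; thus your kernel is nontrivial and the Poincar\'e inequality fails. (Even when the tangent planes do span, spanning alone does not annihilate a quadratic form; what is really needed, and what does hold for a closed hypersurface in $\R^n$ by maximizing linear height functions, is that the normal direction attains sufficiently many values.)

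Moreover, this gap cannot be closed by a better functional inequality: for the same $\Omega$, the metric $g=\bg+\epsilon\, dt^2$ is flat, induces the same metric on $\Sigma$, has $H(g)=0=\bH$, yet has volume $\sqrt{1+\epsilon}\,V(\bg)$, so no diffeomorphism fixing $\Sigma$ pulls it back to $\bg$. Hence in the borderline case permitted by $\Pi+\bH\bar{\gamma}\ge 0$ the conclusion itself can fail, and additional input is needed --- for example $\Omega\subset\R^n$ (where your rigidity claim can be repaired as indicated above), or strict positivity $\Pi+\bH\bar{\gamma}>0$, under which the boundary terms in \eqref{eq-zeromfd} control $\int_\Sigma|h|^2\,\vsg$ and the compactness kernel consists of parallel tensors vanishing identically along $\Sigma$, which are zero. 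For comparison, the paper's own argument stops at \eqref{eq-zeromfd} (using Remark \ref{rem-BM-R} for the interior error) and asserts the conclusion without addressing how $\int_\Sigma F(h)\,\vsg$ or $\int_\Omega|h|^2\,\vbg$ are controlled; your proposal correctly isolates the missing ingredient as a Poincar\'e-type rigidity statement, but the spanning claim on which you base it is unproved and, as stated, false.
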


\begin{proof} As before,  we may assume $ \div_{\bg} h = 0$ where $ h  = g - \bg$.
Choose $ \l = 1 $ in \eqref{eq-main-formula-1},
one has
\begin{equation}  \label{eq-zeromfd}
 \begin{split}
& \    \int_\Omega \lf[   \frac14 ( | \bD h |^2 + | \bnabla ( \tr_{\bg} h ) |^2 )  \ri]
   \ \vbg \\
& \ + \int_\Sigma \lf[  \frac14 (h_{nn})^2 H(\bg)  + \frac12 ( \Pi(X, X) + H(\bg) |X|^2 ) \ri]  \ \vsg  \\
\le & \  \int_\Omega E(h)  \ \vbg  + \int_\Sigma F(h) \ \vsg
 \end{split}
 \end{equation}
where $ | F(h) | \le C ( | h|^2 | \bD h| + | h |^3 ) $ and
$ | E(h) | \le C   | h| | \bD h|^2   $  by Remark \ref{rem-BM-R}.
The result  follows from \eqref{eq-zeromfd}.
\end{proof}

To finish, we mention that  the positive Gaussian curvature condition of the boundary surface
 in \cite{ShiTam02}  is not a necessary condition for the positivity of its Brown-York mass.

\begin{thm} \label{thm-BYmass}
Let $ \Sigma \subset \R^n$ be a connected, closed hypersurface satisfying
$ \Pi + \bH \bar{\gamma} \ge 0$,
 where
$ \Pi $, $ \bH$ are the second fundamental form, the mean curvature of $ \Sigma$,
and  $ \bar{\gamma} $ is the induced metric on $ \Sigma$.
Let $ \Omega$ be the domain enclosed by $ \Sigma$ in $ \R^n$.
Let $ h $ be any nontrivial $(0,2)$ symmetric tensor on $ \Omega $ satisfying
\be \label{eq-conditionh-later}
\div_{\bg} h = 0, \
\ \tr_{\bg} h = 0, \
h |_{T \Sigma} = 0.
\ee
Let $ \{ g(t) \}_{ | t | < \ep }$ be a $1$-parameter family of metrics on $ \Omega$ satisfying
\be \label{eq-conditiong-later}
g(0) = \bg, \ \ g^\prime(0) = h, \  \ R (g(t) ) \ge 0 , \
g(t) |_{T \Sigma} = \bg |_{T \Sigma} .
\ee
Then
\be
 \int_\Sigma \bH  \vsg >  \int_\Sigma  H( g(t) ) )  \vsg
\ee
for small $ t \neq 0$, where $ H(g(t))$ is the mean curvature of $ \Sigma$ in $(\Omega, g(t) )$.
\end{thm}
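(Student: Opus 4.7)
The plan is to study the functional $I(t) := \int_\Sigma H(g(t))\,\vsg$ and show $I(t) < I(0)$ for small $t\ne 0$ by verifying $I'(0) = 0$ and $I''(0) < 0$; the conclusion follows from Taylor's theorem. For the first variation, I invoke \eqref{eq-DH-MT}: since $\tr_{\bg} h = 0$, $\div_{\bg} h = 0$, and $h|_{T\Sigma} = 0$, one has $DH_{\bg}(h) = -\tfrac12\div_\Sigma X$, where $X$ is dual to $h(\bnu,\cdot)|_{T\Sigma}$. Integrating over the closed hypersurface $\Sigma$ gives $I'(0) = 0$.

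For the second variation, write $k = g''(0)$, so that $I''(0) = \int_\Sigma [DH_{\bg}(k) + D^2H_{\bg}(h,h)]\,\vsg$. Expanding $R(g(t)) \ge 0$ in $t$ and noting $DR_{\bg}(h) = 0$ (again from the hypotheses on $h$), the constraint at second order yields $DR_{\bg}(k) + D^2R_{\bg}(h,h) \ge 0$ pointwise. Applying Lemma \ref{lma-DRH} with $\lambda \equiv 1$ on the flat background $\bg$ (so $DR^*_{\bg}(\lambda) = 0$ and $\lambda_{;n} = 0$) gives $\int_\Omega DR_{\bg}(k)\,\vbg = -2 \int_\Sigma DH_{\bg}(k)\,\vsg$, and combining these two facts produces
\[
I''(0) \;\le\; \tfrac12 \int_\Omega D^2R_{\bg}(h,h)\,\vbg + \int_\Sigma D^2H_{\bg}(h,h)\,\vsg.
\]
To evaluate the right-hand side, I apply the master identity \eqref{eq-main-formula-1} to the auxiliary family $g_s = \bg + sh$ with weight $\lambda \equiv 1$. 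On the flat background, every term in \eqref{eq-main-formula-1} involving $DR^*_{\bg}(\lambda)$, $\bar R_{ikjl}$, $\nabla^2_{\bg}\lambda$, $\lambda_{;n}$, or $\bnabla^\Sigma \lambda$ drops out; by Remark \ref{rem-BM-R}, the bulk error loses its $|h|^3$ piece. Reading off the $O(s^2)$ coefficient (using $\tr_{\bg}h = 0$ and $\int_\Sigma DH_{\bg}(h)\,\vsg = 0$) produces the clean identity
\[
\tfrac12 \int_\Omega D^2R_{\bg}(h,h)\,\vbg + \int_\Sigma D^2H_{\bg}(h,h)\,\vsg \;=\; -\mathcal Q(h),
\]
where
\[
\mathcal Q(h) := \tfrac14 \int_\Omega |\bnabla h|^2\,\vbg + \int_\Sigma \Bigl[\tfrac14 h_{nn}^2\,\bH + \tfrac12\bigl(\Pi(X,X) + \bH|X|^2\bigr)\Bigr]\,\vsg.
\]
The assumption $\Pi + \bH\bar\gamma \ge 0$ forces $\bH \ge 0$ and $\Pi(X,X) + \bH|X|^2 \ge 0$, so $\mathcal Q(h) \ge 0$; hence $I''(0) \le -\mathcal Q(h) \le 0$.

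The remaining task, which I expect to be the main obstacle, is to upgrade $\mathcal Q(h) \ge 0$ to strict positivity whenever $h\not\equiv 0$. If $\mathcal Q(h) = 0$, then in particular $\bnabla h \equiv 0$, so $h$ is a parallel (hence constant in the Euclidean setting) symmetric $2$-tensor on $\R^n$ whose restriction to $T_p\Sigma$ vanishes at every $p\in\Sigma$. A short linear-algebra argument shows that a nonzero symmetric bilinear form on $\R^n$ admitting an $(n-1)$-dimensional totally isotropic subspace must have rank at most $2$, and in both the rank-$1$ and rank-$2$ cases the condition $h|_{T_p\Sigma} = 0$ forces $\bnu(p)$ to take values in a fixed pair of antipodal lines; by continuity and connectedness of $\Sigma$, this would confine $\Sigma$ to a single affine hyperplane, contradicting the fact that $\Sigma$ is a closed hypersurface in $\R^n$. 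Thus $h\equiv 0$, contradicting nontriviality, so $\mathcal Q(h) > 0$ and $I''(0) < 0$, completing the argument.
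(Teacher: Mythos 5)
Your argument is correct, and its engine is the same as the paper's: a second-variation argument built on Lemma \ref{lma-DRH} and the sign of the boundary quadratic form coming from $\Pi+\bH\bar{\gamma}\ge 0$. The arrangement differs in two ways. The paper works with the combined functional $\mathcal{F}(t)=\int_\Omega\lf[R(g(t))-R(\bg)\ri]\vbg-2\int_\Sigma\lf[\bH-H(g(t))\ri]\vsg$, computes $\mathcal{F}'(0)=0$ and $\mathcal{F}''(0)=-\tfrac12\int_\Omega|\bD h|^2\,\vbg-\int_\Sigma\lf(\Pi(X,X)+\bH|X|^2\ri)\vsg$ by a direct calculation, and only at the end invokes the integrated inequality $\int_\Omega R(g(t))\,\vbg\ge 0$; you instead isolate $I(t)=\int_\Sigma H(g(t))\,\vsg$, use the pointwise second-order consequence $DR_{\bg}(k)+D^2R_{\bg}(h,h)\ge 0$ of the constraint $R(g(t))\ge 0$, and evaluate the pure-$h$ quadratic form by reading off the $s^2$-coefficient of \eqref{eq-main-formula-1} along the linear path $\bg+sh$ rather than by recomputing the second variation. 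The two routes yield the same quadratic form: since $\tr_{\bg}h=0$ and $h|_{T\Sigma}=0$ force $h_{nn}=0$ on $\Sigma$, your $\tfrac14 h_{nn}^2\bH$ term vanishes and $\mathcal{Q}(h)$ is exactly half of $-\mathcal{F}''(0)$. Two small points to tighten: when you apply Lemma \ref{lma-DRH} to $k=g''(0)$ and identify \eqref{eq-DH-MT} with the actual first variation of $H$ along $k$, record that $k|_{T\Sigma}=0$ (differentiate $g(t)|_{T\Sigma}=\bg|_{T\Sigma}$ twice), since \eqref{eq-DH-MT} is the mean-curvature linearization only for deformations preserving the boundary metric to first order; and your strictness step can be shortened -- if $\bD h\equiv 0$ then $h$ is a constant tensor, and since every unit vector occurs as the outward normal of the closed hypersurface $\Sigma$ at some point (maximize $\la x,v\ra$), the condition $h|_{T_p\Sigma}=0$ for all $p$ makes $h$ vanish on every hyperplane and hence $h\equiv 0$ (in your rank-two case the normal is confined to two lines, not one antipodal pair, though the conclusion is unaffected). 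To your credit, this strictness issue -- the boundary terms alone are merely nonnegative, so negativity of the second variation must come from $\int_\Omega|\bD h|^2>0$ -- is addressed explicitly in your write-up, whereas the paper leaves it implicit.
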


\begin{proof} By Lemma \ref{lma-DRH},  one knows
$$
\frac{d }{d t }
  \lf(   \int_\Omega \lf[R(g(t)) - R(\bg) \ri]  \  \vbg -  2 \int_\Sigma [ \bH  - H( g(t) ) ] \ \vsg \ri) \Big|_{t = 0}=0.
$$
Direct calculation using Lemma \ref{lma-DRH}, \eqref{eq-bdryfact-3} and \eqref{eq-conditionh-later}
shows
\be
\begin{split}
\ & \frac{d^2}{d t^2}
  \lf(   \int_\Omega \lf[R(g(t)) - R(\bg) \ri]  \  \vbg -
 2 \int_\Sigma [ \bH  - H( g(t) ) ] \ \vsg   \ri) \Big|_{t = 0} \\
= &     - \frac12 \int_\Omega  | \bD h |^2  \ \vbg  - \int_\Sigma \lf[   ( \Pi(X, X) + H(\bg) |X|^2 ) \ri]  \ \vsg  \\
 \end{split}
 \end{equation}
which is negative by the assumption on $ \Pi + \bH \bar{\gamma}$.
Thus, for small $ t $,
\be
 2 \int_\Sigma [ \bH  - H( g(t) ) ] \ \vsg  > \int_\Omega \lf[R(g(t)) - R(\bg) \ri] \ \vbg \ge 0.
\ee
\end{proof}

Given an $ h $ satisfying \eqref{eq-conditionh},
a family of deformation $\{g(t) \}$ satisfying \eqref{eq-conditiong}
is given by
$
g(t) = u(t)^\frac{4}{n-2} ( \bg + t h)
$
for small $ t $, where $ u(t) > 0 $ is a conformal factor such that $ R(g(t)) = 0 $
(see \cite[Lemma 4]{MiaoTam2009}).

An example of a non-convex surface $ \Sigma \subset \R^3 $,
which is topologically a $2$-sphere and satisfies the
 condition
$ \Pi + \bH \bar{\gamma} \ge 0 $,
is given by a capsule-shaped surface
with its middle slightly  pinched.


\begin{thebibliography}{10}

\bibitem{ADM61} Arnowitt, R.,  Deser, S. and  Misner, C.W.,
{\sl Coordinate invariance and energy expressions in general relativity},
Phys. Rev. (2) \textbf{122},  997--1006  (1961)

\bibitem{Bartnik86}  Bartnik, R. {\sl The mass of an asymptotically flat manifold},  Comm. Pure Appl. Math. \textbf{39},  661--693  (1986)

\bibitem{Bray-thesis} Bray, H., {\sl The Penrose inequality in generla relativity and volume comparison theorems involving scalar curvature}, PhD thesis, Standford University (1997)

\bibitem{Brendle} Brendle, S., {\sl Rigidity phenomena involving scalar curvature}, arXiv:1008.3097v2,
to appear in {\it Surveys in Differential Geometry}

\bibitem{BrendleMarques} Brendle, S. and  Marques, F. C.,
{\sl Scalar curvature rigidity of geodesic balls in $\mathbb{S}^n$}, 	
J. Differential Geom. \textbf{88},  379--394  (2011)

 \bibitem{BrendleMarquesNeves} Brendle, S., Marques, F. C. and Neves, A.,
 {\sl Deformations of the hemisphere that increase scalar curvature},
 Invent. Math. \textbf{185},  175--197  (2011)

\bibitem{BYmass1}
Brown, J. D. and York,  J. W. Jr.,  {\sl Quasilocal energy in general relativity},
In {\em Mathematical aspects of classical field theory (Seattle, WA, 1991)},
Contemp. Math. \textbf{132}, 129--142,  Amer. Math. Soc., Providence, RI  (1992)

\bibitem{BYmass2}
Brown, J. D. and York, J. W. Jr.,
{\sl Quasilocal energy and conserved charges derived from the  gravitational action},
{Phys. Rev. D (3)}, \textbf{47} (4):1407--1419  (1993)

\bibitem{Chavel} Chavel, I., {\it Eigenvalue in Riemannian geometry}, Pure and Applied mathematics  {\bf 115},
 Academic Press  (1984)

\bibitem{Corvino2000}
Corvino, J., {\sl Scalar curvature deformation and a gluing construction for the Einstein constraint equations}, Commun. Math. Phys. \textbf{214} (1), 137--189  (2000)

\bibitem{GurskyViaclovsky2004} Gursky, M. J. and Viaclovsky, J. A.,
{\sl  Volume comparison and the $\sigma_k$-Yamabe problem},
Adv. Math. \textbf{187},  no. 2, 447--487   (2004)

\bibitem{Miao02}
Miao, P., {\sl Positive mass theorem on manifolds admitting corners along a hypersurface},
Adv. Theor.  Math. Phys. \textbf{6}, no. 6, 1163--1182  (2002)

\bibitem{MiaoTam2009} Miao, P. and Tam, L.-F., {\sl On the volume functional of compact manifolds with boundary with constant scalar curvature}, Calc. Var. \textbf{36}, 141--171  (2009)

\bibitem{MiaoTam2011} Miao, P. and Tam, L.-F., {\sl Einstein and conformally flat critical metrics of the volume functional}, Trans. Amer. Math. Soc. \textbf{363} (6), 2907--2937  (2011)

\bibitem{MinOo} Min-Oo, M.,
{\sl Scalar curvature rigidity of certain symmetric spaces},
In {\it Geometry, topology, and dynamics (Montreal, 1995)}, 127--137,
CRM Proc. Lecture Notes vol. \textbf{15}, Amer. Math. Soc., Providence RI  (1998)

\bibitem{SchoenYau79}
Schoen, R. and Yau, S.-T., {\sl On the proof of the positive mass conjecture in general relativity},
Comm. Math. Phys. \textbf{65}, 45--76  (1979)

 \bibitem{ShiTam02}
Shi, Y.-G. and  Tam, L.-F., {\sl Positive mass theorem and the boundary
behaviors of compact manifolds with nonnegative scalar curvature},
J. Differential Geom. \textbf{62}, 79--125   (2002)

\bibitem{Witten81}
Witten, E., {\it A new proof of the positive energy theorem},
Comm. Math. Phys. \textbf{80}, 381--402  (1981)

\end{thebibliography}
\end{document}